\newcommand{\al}{\alpha}
\newcommand{\ga}{\gamma}
\newcommand{\Ga}{\Gamma}
\newcommand{\eps}{\epsilon}
\newcommand{\var}{\varphi}
\newcommand{\del}{\delta}
\newcommand{\f}{\frac}
\newcommand{\subs}{\subseteq}
\newcommand{\ovs}{\overset}
\newcommand{\BN}{\Bbb{N}}
\newcommand{\cH}{{\cal{H}}}
\newcommand{\lo}{\longrightarrow}
\newcommand{\Lo}{\Longrightarrow}
\newcommand{\rig}{\rightarrow}
\newcommand{\Max}{\operatorname{Max}}
\newcommand{\Sign}{\operatorname{Sign}}
\newcommand{\bS}{{\bold{S}}}
\newcommand{\bT}{{\bold{T}}}
\newcommand{\bA}{{\bold{A}}}
\newtheorem{theorem}{Theorem}[section]
\newtheorem{lemma}[theorem]{Lemma}
\newtheorem{proposition}[theorem]{Proposition}
\newtheorem{corollary}[theorem]{Corollary}
\theoremstyle{definition}
\newtheorem{definition}[theorem]{Definition}
\newtheorem{example}[theorem]{Example}
\newtheorem{remark}[theorem]{Remark} 
\def\titlerunning#1{\gdef\titrun{#1}}
\def\author#1{\gdef\autrun{\def\and{\unskip, }#1}\gdef\@author{#1}}
\def\address#1{{\def\and{\\\hspace*{18pt}}\renewcommand{\thefootnote}{}%
\footnote {#1}}%
\markboth{\autrun}{\titrun}}
\def\email#1{e-mail: #1}
\def\subjclass#1{{\renewcommand{\thefootnote}{}%
\footnote{\emph{Mathematics Subject Classification (2010):} #1}}}
\def\keywords#1{\par\medskip
\noindent\textbf{Keywords.} #1}
\theoremstyle{definition}
\numberwithin{equation}{section}
\begin{document}

%%%%% To ease editing, add:

\baselineskip=17pt

%%%%%%%%%%%%%%%%

%% In the running head, give an abbreviation of the title. 
\titlerunning{}

\title{ Using Supervised Learning to Construct the Best Regularization Term and the Best Multiresolution Analysis}
\author{ Saman Khoramian}
%\and 
%Firstname2 Surname2}
\date{}
\maketitle
\address{ \email{Saman.Khoramian@gmail.com}}
%\and
%F2. Surname2: address2; \email{xx2@yy2}}
\subjclass{94A12; 68T05; 49N45; 42C40.}
\begin{abstract}
By the recent advances in computer technology leading to the invention of more 
powerful processors, the importance of creating models using data training is even 
greater than ever. Given the significance of this issue, this work tries to establish a connection between “the ongoing research direction of Learning and Inverse Problems” and “Applied Harmonic Analysis”. Inspired by methods introduced in [12, 17, 22, 30], which are 
connections between Wavelet and Inverse Problems, we offer a model with the 
capability of learning in terms of an application in signal processing. In order to reach this model, a bi-level optimization 
problem will have to be faced. For solving this, a sequence of step functions 
is presented that its convergence to the solution will be proved. Each of these 
step functions derives from several constrained optimization problems on 
$\mathbb{R}^n$ that will be introduced here.

%% Keywords are optional
\keywords{Signal, Image, Noise, Denoising, Training set, Hilbert Space, 
Orthogonal basis, 
	Minimization, Bi-level optimization, Smoothness Spaces, Regulated functions, 
	Quasiconvex functions, Constrained optimization, Multi-resolution analysis.}
\end{abstract}
%%%%%%%%%%%%%%%%%%%%%%%%%%%%
\section{Introduction}
In recent years, growing volumes and varieties of available data, computational processing that is cheaper and more powerful, as well as affordable data storage has laid the foundation for producing the models that can analyze bigger, more complex data and deliver faster, more accurate results - even on a very large scale. All of these items could be a great incentive for researchers in different fields to present models with self-upgrading ability by learning from data. In fact, this work arises from the impact of machine's power on {\it Inverse Problems} [26]. 
 
Because of new computing technologies, the importance of learning methods today is not like what it was in the past.  In recent years, increasing data availability has allowed learning models to be trained on a large pool of examples, accompanied by increasing computer processing power which has supported the analytical capabilities of these models. With these developments, researchers of different fields are likely to be attracted to use data learning  in their works. In the context of inverse problems, in addition to data usage for informing and validating inverse problems solutions [6, 9, 29], applying learning ability from training data to inform model selection in inverse problems has more recently been given real attention in the community [14, 15, 20, 21, 23, 32, 34, 35, 54, 55, 56, 57, 58]. Among these challenges, the papers [20, 34] are pioneer works in signal denoising by solving a bi-level optimization problem that arises from a merge between learning and inverse problems. Parameter choice in variational regularization methods is the topic of these papers. In fact, they consider the problem of parameter learning for variational image denoising models which lead to bi-level optimization problems that for solving them the semi-smooth Newton methods were applied. Here, like [20, 34] the signal denoising will be addressed and we will select the model by learning from training data as well. However, we will connect “learning and inverse problems” with “multi-resolution analysis” in the realm of applied harmonic analysis and choose the whole penalty term instead of merely the regularization parameter - taking into account the existence of huge data training and powerful computing instruments that they are already available or we are moving toward its availability.\\

Apart from what has been discussed up to here, in a more general perspective, it should be mentioned that this note can also be considered in the view of merging different disciplines. In the last decade, we have witnessed significant advances in many individual core areas of data analysis, including Machine Learning, Signal Processing, Optimization and Harmonic Analysis. It appears highly likely that the next major breakthroughs will occur at the intersection of these disciplines (from Applied Harmonic Analysis, Massive Data Sets, Machine Learning [25, 42], and Signal Processing). Indeed, this paper is to that aim and in this regard, a scheme for it will be presented in the form of a signal denoising problem in continue.\\

In signal processing, noise is a general term for unwanted (and, in general, unknown) modifications that a signal may suffer from during capture, storage, transmission, processing or conversion, and noise reduction is the recovery of the original signal from the noise-corrupted one. In this paper, a special method is found from the perspective of removing noises that occur during data transfer. Since the notion of noise is quite vague, and it is often very difficult to model its structure or its origins, it is important to present a model as close to reality as possible. To make denoising model closer to reality, here, the approach is learning from sample signals.  In the remainder of this section, the main problem of this paper will be introduced in terms of removing the signal's noise in a practical application:\\
\\
Suppose $f$ is an image which is recorded in a place. This image will be 
transferred to another place. What is received is an image $g$
that is a noisy 
version of image  $f$, i.e. $g=f+e$
where $e$  is an unknown noise. In general, there is not 
much information about noise $e$. Noise $e$  can be different in different 
conditions such as different types of weather,
different distances, etc. But suppose $f_1,f_2,\cdots , f_m$   
are certain contractual signals between the sender and the receiver 
(in other words, the sender and the receiver have these finite signals in 
advance) and these pre-defined signals are sent at the outset of each 
connection before sending the recorded image. Because the receiver also has 
these signals, this action can be beneficial to the process. 
Suppose 
$g_1,g_2,\cdots , g_m$
are the signals that are received by the receiver (corresponding 
signals of $f_1,f_2,\cdots ,f_m$).
Then, because the receiver has  $f_1,f_2, \cdots , f_m$ in 
advance, they become aware of a collection of noises: $e_i=g_i-f_i$
for every $i;~1\leq i\leq m$. 
Therefore; because this action always occurs just before a recorded image is 
sent, by having this collection of noises, the receiver will have more 
information around noise $e=g-f$
and can act much better in the process of denoising. 
Here, we define $(f_i,g_i)$
for $i=1,\cdots ,m$  a {\it training set}. However, the challenge is finding a 
denoising mathematical model in which this training set can play a role in the 
process of denoising for $g$. From the point of machine learning language, $f_i$'s and $g_i$'s are called objects (inputs) and labels (outputs) respectively and the challenge is finding an appropriate supervised learning algorithm regarding them.

At this stage the question of ``how can this prior knowledge about the noises be 
embedded in the denoising process?" will be addressed.
Regarding denoising problems, several approaches have been introduced in 
applied mathematics literature such as filtering approach [1, 46]
and the variational/PDE-based approaches [2, 13, 44]. Here, among the different models of 
denoising, we will focus on one of them which finds a function 
$\tilde{f}$  that minimizes 
over all possible functions $f$  the functional 
\begin{equation*}
\|f-g\|_{L_2(I)}^2+\lambda \|f\|_Y
\tag*{$(1\cdot 1)$}
\end{equation*}
where
$$\|f-g\|_{L_2(I)}:=\Biggl(\int_I|f(x)-g(x)|^2dx\Biggr)^{\frac{1}{2}}$$
is the root-mean-square {\it {error}} (or more generally difference) 
between $f$  and $g$, and $\|f\|_Y$ is the norm of the 
approximation $f$
in a {\it {smoothness space}} $Y.$ The original image (signal) $g$ could be 
noisy, while $\tilde{f}$
would be a denoised version of $g$. The amount of noise removal is 
determined by the parameter $\lambda$,
if $\lambda$ is large, then necessarily $\|f\|_Y$ must be smaller 
at the minimum, i.e. $f$
must be smoother, while when $\lambda$ is small, $f$ can be rough, 
with $\|f\|_Y$
large, and one achieves a small error at the minimum; see [12, 22].

As a result, choosing the right $\lambda$
in (1.1) will help to a better denoising. In continue, among all possible formats for (1.1), we focus on the following format:
\begin{align*}
& \Phi_\lambda:\mathcal{H}\longrightarrow \mathbb{R}\\
& \Phi_\lambda(f)=\|f-g\|_{\mathcal{H}}^2+\lambda|||f|||_{W,p}^p
\tag*{$(1\cdot 2)$}
\end{align*}
where $\mathcal{H}$ is a Hilbert space and
\begin{equation*}
|||f|||_{W,p}=\Biggl(\sum_{\gamma\in\Gamma} w_\gamma|\langle 
f,\varphi_\gamma\rangle|^p\Biggr)^{\frac{1}{p}}
\tag*{$(1\cdot 3)$}
\end{equation*}
for $1\leq p\leq 2$, 
is a weighted $\ell_p$-norm of the coefficients of $f$  with respect to an 
orthonormal basis $(\varphi_\gamma)_{\gamma\in\Gamma}$
of $\mathcal{H}$,
 and a sequence of strictly positive weights $W=(w_\gamma)_{\gamma\in 
\Gamma}$.

An idea arising at this time is that if two or more constraints (approximations for smoothness spaces norms) in a 
minimization problem can simultaneously be considered, we can gain better 
results for denoising. Again, among all possible minimization problems that involve more than one constraint in the process of denoising, we consider the minimization of the following functional:
\begin{equation*}
\Phi_{\lambda_1,\cdots , 
\lambda_n}(f)=\|f-g\|_{\mathcal{H}}^2+\lambda_1|||f|||_{W_1,p_1}^{p_1}+\cdots
+\lambda_n
|||f|||_{W_n,p_n}^{p_n}.
\tag*{$(1\cdot 4)$}
\end{equation*}
This is the same minimization problem raised in [30] for $K=I$,
where $K$  is a linear 
bounded operator from $\mathcal{H}$  to $\mathcal{H}'$.
 In [30] an iterative sequence in 
$\mathcal{H}$  has been 
presented that converges strongly to the minimizer of $\Phi_{\lambda_1,\cdots , 
	\lambda_n}$.
However, for the case  $K=I$,
the minimization problem can be solved simpler and more straightforward 
without having to going through the iterative process. We have summarized this 
in the following lemma:
%%%%%%%%%%%%%%%%%%%%%%%%%%%%%%%%%%%%%%%%5
\begin{lemma}\label{1.1}
Define the function $\Phi$ as follows:
$$\begin{array}{c}
\Phi:\mathcal{H}\longrightarrow \mathbb{R}\\
\Phi(f)=\|f-g\|_\mathcal{H}^2+|||f|||_{W_1,p_1}^{p_1}+\cdots +|||f|||_{W_n,p_n}^{p_n}
\end{array}$$
where $\mathcal{H}$ is a Hilbert space and
$$|||f|||_{W_i,p_i}=\Biggl(\sum_{\gamma\in\Gamma}w_{i,\gamma}|\langle 
f,\varphi_\gamma\rangle|^{p_i}\Biggr)
^{\frac{1}{p_i}}$$
for $1\leq i\leq n$, $1\leq p_i\leq 2$, 
is a weighted $\ell_{p_i}$-norm of the coefficients of $f$  with respect to an 
orthonormal basis $(\varphi_\gamma)_{\gamma\in\Gamma}$
of $\mathcal{H}$, and a sequence of strictly positive weights  
$W_i=(w_{i,\gamma})_{\gamma\in\Gamma}$. Then 
$f_m=\sum_{\gamma\in\Gamma}S_ {(w_{1,\gamma},\cdots , w_{n,\gamma}),(p_1,\dots, 
	p_n)}(g_\gamma)\varphi_\gamma$ is 
a minimizer of $\Phi$, where the function 
$S_{(c_1,\dots,c_n),(p_1,\dots,p_n)}$ from $\mathbb{R}$ to itself is defined by 
$$S_{(c_1,\dots,c_n),(p_1,\dots,p_n)} (t) =
\begin{cases}
F^{-1}(t) & B=\phi\\
F_1^{-1}(t) & B\neq\phi, t\in \left(\f{\sum_{i\in B}c_i}{2},+\infty\right) \\
0 & B\neq\phi, t\in \left[-\f{\sum_{i\in B}c_i}{2}, \f{\sum_{i\in B} 
c_i}{2}\right] \\
F_2^{-1}(t) & B\neq\phi, t\in\left(-\infty, \f{-\sum_{i\in B}c_i}{2}\right) 
\end{cases}$$
where $B=\{i| p_i=1\}$ and the functions $F_1,F_2,F$ are defined by 
\begin{align*}
&F(x)=x+\Sign x\f{\sum_{i=1}^n p_ic_i |x|^{p_i-1}}{2} \quad \text{for}\quad 
x\in R,\\
&F_1(x)=x+\f{\sum_{i\in B}c_i+\sum_{i\not\in B}p_ic_i|x|^{p_i-1}}{2}
\quad \text{for}\quad x>0,\\
&F_2(x)=x-\f{\sum_{i\in B}c_i+\sum_{i\not\in B}p_ic_i|x|^{p_i-1}}{2}
\quad \text{for}\quad x<0.
\end{align*}
\end{lemma}
\begin{proof}
By using the shorthand notations $f_\gamma$
for  $\langle f,\varphi_\gamma\rangle$ and $g_{\gamma}$ 
for $\langle g,\varphi_\gamma\rangle$, we have:
$$\Phi(f)=\sum_{\gamma\in\Gamma}|f_\gamma-g_\gamma|^2+\sum_{\gamma\in\Gamma}
w_{1,\gamma}|\langle f,\varphi_\gamma\rangle|^{p_1}+\cdots 
+\sum_{\gamma\in\Gamma}w_{n,\gamma}|\langle f,\varphi_\gamma\rangle|^{p_n}.$$
Therefore, 
$$\Phi(f)=\sum_{\gamma\in\Gamma}f^2_\gamma+g_\gamma^2-2f_\gamma 
g_\gamma+w_{1,\gamma}|\langle
f,\varphi_\gamma\rangle|^{p_1}+\dots +w_{n,\gamma}|\langle 
f,\varphi_\gamma\rangle|^{p_n}.$$
We define $M_{g_\gamma;(w_{1,\gamma},\dots ,w_{n,\gamma})}$ as follows:
$$M_{g_\gamma;(w_{1,\gamma},\dots ,w_{n,\gamma})}(x)=x^2+g_\gamma^2-2g_\gamma 
x+w_{1,\gamma}|x|^{p_1}+\dots +w_{n,\gamma}|x|^{p_n}.$$
By this definition, we will have:
$$\Phi(f)=\sum_{\gamma\in\Gamma}M_{g_\gamma;(w_{1,\gamma},\dots 
	,w_{n,\gamma})}(f_\gamma).$$
%%%%%%%%%%%%%%
By Lemma 2.1 in [30], we know $S_{(w_{1,\gamma},\cdots , w_{n,\gamma}),(p_1,\dots,
	p_n)}(g_\gamma)$ is  
a minimizer of $M_{g_\gamma;(w_{1,\gamma},\dots
 ,w_{n,\gamma})}$. Then by the following 
Fact, the proof is compeleted:\\
\\
Fact: {\textit {Suppose the function $\Phi$ is as follows:
\begin{align*}
&\Phi:\mathcal{H}\longrightarrow \mathbb{R}\\
&\Phi(f)=\sum_{\gamma\in\Gamma}M_\gamma(f_\gamma)
\end{align*}
where $\mathcal{H}$
is a Hilbert space with the orthonormal basis 
$(\varphi_\gamma)_{\gamma\in\Gamma}$
and $M_\gamma$ is a function from  $\mathbb{R}$
to $\mathbb{R}$
for every $\gamma\in\Gamma$.
Moreover, $t_\gamma$ is a minimizer of the function $M_\gamma$ for every 
$\gamma\in\Gamma$. Then  $f_m=\sum_{\gamma\in\Gamma}t_\gamma\varphi_\gamma$ is 
a minimizer of the function $\Phi$.}}
\end{proof}
As we said about $\lambda$
in (1.1), choosing the right $\lambda_1,\cdots ,\lambda_n$ in (1.4) will 
help to better denoising too. An idea for the selection of 
$(\lambda_1,\cdots,\lambda_n)$
is using a 
learning process: We consider a training set of pairs $(g_i,f_i);~i=1,2,\cdots 
, m$. In this set as it 
has already been mentioned, $g_i$'s
are noisy signals (images) received in a 
certain condition, and $f_i$'s
represent the pre-defined signals (images). The effort to attain the optimal choice of $(\lambda_1,\cdots ,\lambda_n)$ naturally leads to finding the minimizer of the following function:
\begin{align*}
&\mathfrak{I}:\mathbb{R}^n\longrightarrow \mathbb{R}\\
& 
\mathfrak{I}(\lambda_1,\cdots,\lambda_n)=\sum_{i=1}^m\|\tilde{f}_{i,(\lambda_1,\cdots,
\lambda_n)}-f_i\|^2\\
&
;\tilde{f}_{i,(\lambda_1,\cdots ,\lambda_n)}=\text{arg-min}_{f\in 
\mathcal{H}}\|f-g_i\|^2+
\sum_{j=1}^n\lambda_j|||f|||_{W_j,p_j}^{p_j}.
\tag*{$(1\cdot 5)$}
\end{align*}

The minimization problem of functions in the type of (1.5) is named {\it bilevel optimization problems} [4]. 
Now, a more general form of this view is proposed: Instead of finding 
$(\lambda_1,\cdots ,\lambda_n)$, we 
are going to find an appropriate $\psi$  for the following functional
\begin{equation*}
\|f-g\|^2+\psi(f).
\tag*{$(1\cdot 6)$}
\end{equation*}
Suppose $\mathcal{H}$ is a Hilbert space with the orthonormal basis $(\varphi_\gamma)_{\gamma\in\Gamma}$ and $\textbf{X}$ is the set of all functions $\psi:\mathcal{H}\longrightarrow
 \mathbb{R}$. Moreover, we assume $\textbf{Z}$ is a subset of $\textbf{X}$ such that for every $\psi \in \mathbf{Z}$, the 
problem of finding the minimizer of (1.6) is soluble and present the following function: 
\begin{align*}
&\mathfrak{I}:\textbf{Z}\longrightarrow \mathbb{R}\\
& \mathfrak{I}(\psi)=
\sum_{i=1}^m\|\tilde{f}_{i,\psi}-f_i\|^2\\
&
;\tilde{f}_{i,\psi}=\text{arg-min}_{f\in 
	\mathcal{H}}\|f-g_i\|^2+
\psi(f).
\tag*{$(1\cdot 7)$}
\end{align*}
Clearly, the best choice of the $\psi\in \textbf{Z}$
for (1.6) is a $\psi$  where $\mathfrak{I}(\psi)$ have the lowest 
possible amount or its equivalency, the value of the
$\sum_{i=1}^m\|\tilde{f}_{i,\psi}-f_i\|^2$
is very low. A $\mathbf{Z}$ is appropriate providing that by considering it in the problem (1.7), make it soluble. The set $B_{W,P}$ introduced below is from this type: 
%%%%%%%%%%%%%%%%%%%
\begin{example}\label{1.2}
Let $\textbf{Z}=B_{W,P}$ which $B_{W,P}$ is defined in the following:
$$B_{W,P}=\{\psi:\mathcal{H}\longrightarrow
\mathbb{R}|~\psi(f)=\sum_{i=1}^n\lambda_i|||f|||_{W_i,p_
i}^{p_i};~\lambda_i\in \mathbb{R}^+ ~ \text{for}~ i\in \{1,\cdots , n\}\}$$
where  $W=\{W_1,\cdots, W_n\}, P=\{p_1,\cdots , p_n\}, 
W_i=\{w_{i,\gamma}\}_{\gamma\in\Gamma}$ and 
$|||f|||_{W_i,p_i}=(\sum_{\gamma\in\Gamma}w_{i,\gamma}|\langle 
f,\varphi_\gamma\rangle|^{p_i})
^{\frac{1}{p_i}}$ for every $i; ~1\leq i\leq n$. Taking into account that $\textbf{Z}=B_{W,P}$,
the best $\psi$ is obtained by finding the minimizer of 
the function (1.5) as explained before. Moreover, we know after finding such 
$\psi$ the 
problem of finding the minimizer of (1.6) is soluble by using Lemma 1.1. For a discussion of this example, see Section 6.  
\end{example}
It is obvious that if the subset $\textbf{Z}$ of $\textbf{X}$
is larger,  $\inf\mathfrak{I}_{\psi\in \textbf{Z}}(\psi)$ will be less. Therefore, 
by assuming the possession of big data of signals and powerful computational 
tools, it would be desirable to seek an optimal $\psi_0$
among the largest subset  $\textbf{Z}$ of   $\textbf{X}$ 
provided that the following minimization problem is soluble for every  
$g\in \mathcal{H}$
$$\text{arg-min}_{f\in \mathcal{H}}\|f-g\|^2+\psi_0(f).$$
This paper addresses the solving of the problem (1.7) by putting $\textbf{Z}=\textbf{C}$ which is much bigger than $B_{W,P}$ introduced in Example 1.2 and is defined as follows:
\begin{align*}
\textbf{C}=\{\psi:\mathcal{H}\longrightarrow \mathbb{R}|&~\psi(f)=\sum_{\gamma\in\Gamma}
\psi_\gamma(f_\gamma);~\forall 
\gamma\in\Gamma:~\psi_\gamma:\mathbb{R}\longrightarrow \mathbb{R}\\
& \text{is a {\it regulated} and 
	{\it quasiconvex function}}\}
\end{align*}
where a function $f:\mathbb{S}\longrightarrow \mathbb{R}$ defined on a convex subset $\mathbb{S}$ of a real vector space is called a regulated 
		function if it has one-sided limits  at every point $x\in \mathbb{S}$ and is called 
		quasiconvex if  for all $x,y\in \mathbb{S}$ and $\lambda\in [0,1]$ we have 
		$$f(\lambda x+(1-\lambda)y)\leq \text{Max} \{f(x), f(y)\}.$$
If furthermore	 
		$$f(\lambda x+(1-\lambda)y)< \text{Max} \{f(x), f(y)\}$$
for all $x \neq y$ and $\lambda\in (0,1)$, then $f$ is {\it strictly quasiconvex}.\\

In Sections 2, we present a sequence $\{\psi_n\}_{n\in N}$ 
from $\textbf{C}$  such that:
\begin{align*}
\mathfrak{I}(\psi_n)\longrightarrow \inf_{\psi\in \textbf{C}} \mathfrak{I} (\psi)~~~~\text{when} 
~~~~n\longrightarrow 
\infty.
\end{align*}
In Section 3, the issue of selecting the basis type will be added to the 
learning process. Section 4 is an alternative for Section 2. Moreover, how classifiers assist in upgrading the learning model will be discussed. In Section 5, by providing algorithms, implementation of the methods presented in
Sections 2, 3 and 4 will be clarified. In Section 6, we will theoretically clarify the superiority of our method over other articles in the realm of Learning and Inverse Problems. For this, we will do it with the aid of the following inclusion relations: 
$$B^{\prime\prime}_{W,p}\subseteq B_{W,P} \subseteq \textbf{C},$$
$$B^{\prime\prime}_{W,P}\subseteq B^{\prime}_{W,P} \subseteq \textbf{C};$$ 
where $B^{\prime\prime}_{W,P}$ and $B^{\prime}_{W,P}$ will be defined there. Moreover, we will address the importance of parameter learning in the case of not having large training sets or a lack of sufficient processing instruments.   Finally, in Section 7, the paper will conclude with two ideas for utilizing the learning method in {\it Speech Processing} and {\it Satellite Image Processing}.
\section{Finding the best regularization term  for a training set}
Solving the minimization problem (1.7) originated from a learning process for a signal processing application introduced in the introduction is the main part of this section and
will be provided in Theorem 2.5. As already mentioned, a suitable regularization term regarding a training set will be found through this minimization problem. So, we begin with the following simple Facts which will be later used in proving this theorem: 

%%%%%%%%%%%%
\paragraph{Fact 1.}
{\textit {If $X$ is a Metric space, $h:X\longrightarrow \mathbb{R}$ is a continuous function 
and bounded 
from below, $X_n\subseteq X$ such that $X_n\subseteq X_{n+1}$ for every 
$n\in\mathbb{N}$ and $\overline {\bigcup_{n=1}^{+\infty} X_n}=X$ and moreover 
$|h(x_n)-\text{inf}_{x\in X_n}h(x)|\longrightarrow 0$ when 
$n\longrightarrow\infty$, then 
$h(x_n)\longrightarrow 
\text{inf}_{x\in X} h(x)$ when $n\longrightarrow \infty$.}}
%%%%%%%%%
\paragraph{Fact 2.}
{\textit {If $\psi=\sum_{t=1}^na_t\chi_{A_t}$ is a step function, then $\psi$ is a 
quasiconvex function if and only if for every $r,s,t\in \{1,\cdots , n\}$ such 
that $r<s<t$, we have $a_s\leq \text{Max}\{a_r,a_t\}$.}}
%%%%%%%%%%%%%%
\paragraph{Fact 3.}
{\textit {If $\mathcal{H}$
is a Hilbert space and $\{\varphi_\gamma\}_{\gamma\in \Gamma}$ is an
orthonormal 
basis for $\mathcal{H}$, then 
$$\|\sum_{\gamma\in\Gamma}<t,\varphi_\gamma>\varphi_\gamma\|^2=
\sum_{\gamma\in\Gamma}|<t,\varphi_\gamma>|^2$$
for every $t\in\mathcal{H}$.}} 
%%%%%%%%%
\paragraph{Fact 4.}
{\textit {If $M_1,M_2\in\mathbb{Z}$, $\mathcal{H}$ is  a Hilbert space and 
$\{\varphi_\gamma\}_{r\in\Gamma}$ 
is an 
orthonormal basis for $\mathcal{H}$ and also $F_\gamma:(M_1,M_2)\longrightarrow 
\mathbb{R}$ are functions and 
bounded from below for every $\gamma\in\Gamma$, then 
$$\text{arg-min}_{f\in{\mathcal{H}}_{(M_1,M_2)}}
\sum_{\gamma\in \Gamma}F_\gamma(\langle 
f,\varphi_\gamma\rangle)=\sum_{\gamma\in\Gamma}(\text{arg-min}_{x\in 
	(M_1,M_2]}F_\gamma(x))\varphi_\gamma$$
where ${\mathcal{H}}_{(M_1,M_2]}=\{f\in{\mathcal{H}}|~f_\gamma\in (M_1,M_2]~
\text{for  
	every }
\gamma\in\Gamma\}$.}} 
%%%%%%%%%%%%%%%%%%%%
\paragraph{Fact 5.} 
{\textit {Suppose $a,b \in \mathbb{R}$ and $M_1,M_2\in \mathbb{Z}$ is large 
enough such 
that $a,b\in (M_1,M_2]$. Define the set ${\mathcal{A}}_n$ for every $n\in 
\mathbb{N}$ 
by 
${\mathcal{A}}_n=\{\varphi|~\varphi=\sum_{t=1}^{(M_2-M_1)\times 2^n} 
x_t\chi_{A_t}$;    
$A_t=(M_1+\frac{t-1}{2^n}, M_1+\frac{t}{2^n}], \varphi ~ \text{is a quasiconvex function}\}$. Furthermore, define the function $f$ as follows:
\begin{align*}
&f:(M_1,M_2]\longrightarrow \mathbb{R}\\
&f(x)=(x-b)^2.
\end{align*}
Moreover, assume that $\{\psi_n\}_{n=1}^{+\infty}$ is a sequence  such that 
$\psi_n\in{\mathcal{A}}_n$ for every $n\in\mathbb{N}$ and for $f^* \in \overline{\{f+\psi_n|~n\in\mathbb{N}\}}$, we have $f^*$ is a strictly quasiconvex
function. Then, if $|(f+\psi_n)(a)-\min\{(f+\psi_n)(x)|~x\in (M_1,M_2]\}|\longrightarrow 0$ when 
$n\longrightarrow 
+\infty$, we will have 
$$|a-\text{arg-min}_{x\in (M_1,M_2]}(f+\psi_n)(x)|\longrightarrow 0 \quad 
\text{as} ~~\; n\longrightarrow +\infty.$$}}\\

Moreover, we also need these following two lemmas for proving Theorem 2.5. By minor modifications of Theorem 4.3
in [3] and {\it Fact 2}, the following lemma 
is achieved:
%%%%
%\begin{Lemma}\label{lem2-1}
\begin{lemma}\label{2.1}
%{\em 
Suppose $S_n=\{1,\cdots ,(M_2-M_1)\times 2^n\}$ and sets $\mathcal{A}$, $\mathcal{A}_n$ for every $n \in \mathbb{N}$  as 
follows:
\begin{align*}
&\mathcal{A}=\{ \psi \big |~\psi:(a,b]\rightarrow \mathbb{R},~\psi ~ \text{is 
	a regulated and quasiconvex function}\},\\
&\mathcal{A}_n=\Big
 \{ \psi \big |~\psi =\sum_{t=1}^{(b-a) \times 2^n}a_t \chi_{A_t};~ 
A_t =(a+\dfrac{t-1}{2^n},a+\dfrac{t}{2^n}] ~\text{for} ~1\leq t\leq (b-a) 
\times 2^n,\\
&~~~~~~~~~~~~~\forall (r,s,t) \in \lbrace (r,s,t)\big |~ r,s,t \in S_n,~ r<s<t \rbrace :~a_s\leq \max \{a_r,a_t \} \Big\}.
\end{align*}
Then
\begin{equation*}
\overline{\bigcup_{n=1}^{\infty} \mathcal{A}_n}=\mathcal{A}.
\end{equation*}
\end{lemma}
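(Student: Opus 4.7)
The plan is to prove both inclusions separately, using Fact 2 to convert the combinatorial condition defining $\mathcal{A}_n$ into quasiconvexity, and invoking Theorem 4.3 of [3] for the density of dyadic step functions among regulated functions on $(a,b]$.

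First, for the inclusion $\overline{\bigcup_{n\geq 1} \mathcal{A}_n} \subseteq \mathcal{A}$, I would start by checking $\mathcal{A}_n \subseteq \mathcal{A}$. Every $\psi \in \mathcal{A}_n$ is a step function on a finite dyadic partition, so it is automatically regulated (the one-sided limits at each point equal the constant value on the adjacent dyadic cell). By Fact 2, the condition $a_s \leq \max\{a_r,a_t\}$ on every triple $r<s<t$ is exactly the statement that such a step function is quasiconvex. Next I would verify that $\mathcal{A}$ is closed in the topology used by [3, Theorem 4.3]: regulated functions are closed under uniform convergence, and quasiconvexity passes to the limit by taking $n \to \infty$ in
$$\psi_n(\lambda x + (1-\lambda)y) \leq \max\{\psi_n(x), \psi_n(y)\}.$$
This yields $\overline{\bigcup_n \mathcal{A}_n} \subseteq \mathcal{A}$.

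For the reverse inclusion, let $\psi \in \mathcal{A}$. Theorem 4.3 of [3] gives a sequence of dyadic step functions converging to $\psi$; the ``minor modification'' alluded to in the statement is to choose such approximants in a way that preserves the combinatorial condition of Fact 2. Concretely, set $\psi_n = \sum_{t} a_t^{(n)} \chi_{A_t}$ with $a_t^{(n)} = \psi(y_t^{(n)})$ for a sample point $y_t^{(n)} \in A_t$. Since $y_r^{(n)} < y_s^{(n)} < y_t^{(n)}$ for $r<s<t$, quasiconvexity of $\psi$ on $(a,b]$ gives
$$a_s^{(n)} = \psi(y_s^{(n)}) \leq \max\{\psi(y_r^{(n)}), \psi(y_t^{(n)})\} = \max\{a_r^{(n)}, a_t^{(n)}\},$$
so $\psi_n \in \mathcal{A}_n$ by Fact 2.

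The main obstacle is verifying that this sampling construction actually converges to $\psi$ in the chosen topology, since naive endpoint sampling cannot uniformly approximate a regulated function whose jumps miss the dyadic rationals. This is precisely where the adaptation of [3, Theorem 4.3] is needed: one adapts the proof there so that for each $\varepsilon > 0$, the level $n$ is chosen so large that each of the finitely many jumps of $\psi$ of size $\geq \varepsilon$ is isolated in a single dyadic cell while the oscillation of $\psi$ is less than $\varepsilon$ on every other cell, and the sample point $y_t^{(n)}$ inside each cell is selected with appropriate awareness of the relevant one-sided limit. Combining this refined approximation (which keeps the $\psi_n$ in $\mathcal{A}_n$ by the sampling argument above) with the closure argument from the first step then closes the proof.
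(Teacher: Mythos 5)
Your first half is sound and follows the route the paper itself indicates: $\mathcal{A}_n\subseteq\mathcal{A}$ via Fact 2, stability of regulatedness and quasiconvexity under uniform limits, and the observation that sampling a quasiconvex $\psi$ at points $y_t^{(n)}\in A_t$ yields a step function satisfying the triple condition, hence an element of $\mathcal{A}_n$. The genuine gap is in the final step, where you assert that the approximation scheme can be ``refined'' so as to converge to $\psi$. In the supremum metric --- which is the metric the paper actually uses on $\mathcal{A}$ (see item {\it (i)} in the proof of Proposition 2.3) --- no choice of sample points, and in fact no sequence of step functions whose breakpoints are confined to the dyadic grid, can converge uniformly to a $\psi\in\mathcal{A}$ having a jump of size $J>0$ at a point $c$ not of the form $a+t/2^n$: for every $n$ the point $c$ lies in the interior of a single cell $A_t$, the approximant is constant on $A_t$, while $\psi$ takes values arbitrarily close to both one-sided limits inside $A_t$, so the sup-error on $A_t$ is at least about $J/2$ no matter how large $n$ is. Isolating the jump inside one cell therefore does not help; increasing $n$ shrinks the cell but not the error. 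Since monotone step functions such as $\psi=\chi_{[c,b]}$ restricted to $(a,b]$ are regulated and quasiconvex, such $\psi$ genuinely belong to $\mathcal{A}$, and your construction cannot be pushed through for them.

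The underlying point is that [3, Theorem 4.3] approximates a regulated function by step functions with \emph{arbitrary} breakpoints, adapted to the jumps of $\psi$, whereas the lemma fixes the dyadic partition in advance; passing from one to the other is not a ``minor modification'' but precisely the crux, and as you describe it the step fails. To be fair, the paper offers only the one-line citation plus Fact 2, so the missing work is the same in both places; but your write-up claims to close the argument while the key convergence claim is false as stated in the sup metric --- repairing it requires either a weaker notion of closure on $\mathcal{A}$ (e.g.\ one insensitive to the location of finitely many jumps), a restriction of $\mathcal{A}$ (e.g.\ to functions continuous off the dyadic points), or partitions adapted to $\psi$.
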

Furthermore, by {\it Fact 5}, the following lemma is achieved as well: 
%%%%%%%%%
\begin{lemma}\label{2.2}
Suppose $a_1,a_2,b_1,b_2 \in \mathbb{R}$ and $M_1,M_2 \in \mathbb{Z}$ are large 
enough
such that $a_1,a_2,b_1,b_2 \in (M_1,M_2]$. Define the set $\mathcal{A}_n$ 
for every $n \in \mathbb{N}$ by
\begin{align*}
%\begin{split}
\mathcal{A}_n&=\{\varphi \big|~\varphi = \sum_{t=1}^{(M_2-M_1)
\times 2^n}x_t 
\chi_{A_t};~ A_t =(M_1+\dfrac{t-1}{2^n},M_1+\dfrac{t}{2^n}],~\varphi ~\text{is quasiconvex} \}.
%\end{split}
\end{align*}
Furthermore, define the functions $f,g,h_1,h_2$ as follows:
\begin{align*}
%\begin{split}
&f,g:(M_1,M_2]\rightarrow \mathbb{R}\\
&f(x)=(x-b_1)^2,\\
&g(x)=(x-b_2)^2,\\
\\
&h_1,h_2:\overline{\cup_{n=1}^{\infty} \mathcal{A}_n}\longrightarrow 
\mathbb{R}
%\end{split}
\end{align*}
\begin{align*}
h_1(\psi)&=\vert
 (f+\psi)(a_1)- \min\lbrace (f+\psi)(x)\big|~ x \in (M_1,M_2] 
\rbrace \vert^2 \\
&~~+ \vert (g+\psi)(a_2)- \min \lbrace (g+\psi)(x)\big|~ x \in (M_1,M_2] \rbrace \vert^2,\\
h_2(\psi)&=\vert a_1 -\text{arg-min}_{x \in (M_1,M_2]}(f+\psi)(x) \vert^2 \\
&~~+ \vert a_2 -\text{arg-min}_{ x \in (M_1,M_2]}(g+\psi)(x) \vert^2.
\end{align*}
Moreover,
$\inf_{\psi \in \overline{\cup_{n=1}^{\infty} \mathcal{A}_n}}h_1(\psi)=0$
and 
suppose $\lbrace \psi_n \rbrace_{n=1}^{\infty}$ is a sequence such 
that $\psi_n \in \mathcal{A}_n$ for every
$n\in \mathbb{N}$ and for $f^* \in \overline{\{f+\psi_n|~n\in\mathbb{N}\}}$, $g^* \in \overline{\{g+\psi_n|~n\in\mathbb{N}\}}$ we have $f^*$, $g^*$ are strictly quasiconvex
functions. Then, if $h_1(\psi_n)\longrightarrow 
\inf_{\psi\in\overline{\cup_{n=1}^{\infty} \mathcal{A}_n}}h_1(\psi)$ when 
$n\rightarrow \infty$, we will have
$$h_2(\psi_n )\longrightarrow 
\inf_{\psi\in \overline{\cup_{n=1}^{\infty} \mathcal{A}_n}}h_2(\psi)~~
\text{as}~~ n\longrightarrow +\infty.$$
\end{lemma}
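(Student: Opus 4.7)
The plan is to reduce this lemma to two independent applications of Fact 5, one for each summand in the definitions of $h_1$ and $h_2$. The key observation is that although $h_1$ and $h_2$ each bundle together the data of $(a_1, b_1)$ and $(a_2, b_2)$, the two contributions are additively decoupled, and the \emph{same} sequence $\{\psi_n\}$ serves both. So I would apply Fact 5 twice against $\{\psi_n\}$: once with data $(a,b) = (a_1, b_1)$ driving the convergence of the $f$-term, and once with $(a,b) = (a_2, b_2)$ driving the convergence of the $g$-term.

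First, since both summands defining $h_1$ are nonnegative and $\inf h_1 = 0$, the hypothesis $h_1(\psi_n) \to \inf h_1 = 0$ forces each summand to tend to zero separately:
\[
\bigl|(f+\psi_n)(a_1) - \min\{(f+\psi_n)(x) : x \in (M_1,M_2]\}\bigr| \to 0,
\]
\[
\bigl|(g+\psi_n)(a_2) - \min\{(g+\psi_n)(x) : x \in (M_1,M_2]\}\bigr| \to 0.
\]
Each of these is exactly the convergence hypothesis of Fact 5 for its data. The remaining hypotheses of Fact 5 are supplied directly by the lemma statement: membership $\psi_n \in \mathcal{A}_n$, the quadratic form $f(x) = (x-b_1)^2$ (respectively $g(x) = (x-b_2)^2$), and strict quasiconvexity of every limit $f^* \in \overline{\{f+\psi_n\}}$ (respectively $g^* \in \overline{\{g+\psi_n\}}$).

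Applying Fact 5 to each of the two pairs then gives
\[
\bigl|a_1 - \text{arg-min}_{x \in (M_1,M_2]}(f+\psi_n)(x)\bigr| \to 0, \qquad
\bigl|a_2 - \text{arg-min}_{x \in (M_1,M_2]}(g+\psi_n)(x)\bigr| \to 0.
\]
Squaring and summing yields $h_2(\psi_n) \to 0$. Since $h_2 \geq 0$ on its entire domain, $\inf_\psi h_2(\psi) \geq 0$, while the sequence $\{\psi_n\}$ just exhibited shows this infimum is actually $0$. Hence $h_2(\psi_n) \to \inf_\psi h_2(\psi)$, as required. I do not anticipate a real obstacle: the only nontrivial ingredient is Fact 5, and the strict-quasiconvexity assumption on the closures has been written into the lemma precisely to activate it. The one structural point worth emphasizing is that the quadratic data-fidelity terms are separable across the two measurement pairs, so this two-point lemma is simply a coordinate-wise doubling of the one-point content of Fact 5.
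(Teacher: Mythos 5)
Your argument is correct and is essentially the paper's own route: the paper derives Lemma 2.2 directly from \emph{Fact 5}, exactly as you do, by splitting the two nonnegative summands (using $\inf h_1 = 0$ to force each to vanish) and applying Fact 5 once for $(a_1,b_1)$ and once for $(a_2,b_2)$. Your closing observation that $h_2 \geq 0$ together with $h_2(\psi_n) \to 0$ identifies $\inf h_2 = 0$ is the right way to finish, and nothing further is needed.
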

To prove Theorem 2.5, the following proposition plays a key role:
%%%%%%%%%%%%%%%%%%%%%%%%%%%%%5}
\begin{proposition}\label{2.3}
%\begin{Proposition}\label{pro2-2}
%{\em 
Suppose $a_1,a_2,b_1,b_2 \in \mathbb{R}$ and $M_1,M_2 \in \mathbb{Z}$ are large 
enough such that $a_1,a_2,b_1,b_2 \in (M_1,M_2]$. 
Moreover, define the set  $\mathcal{A}$ and the function  $h$  from  
$\mathcal{A}$ to $\mathbb{R}$ by
%%%%%
\begin{align*}
%\begin{split}
\mathcal{A}= \lbrace \psi \big |~\psi:(M_1,M_2]\rightarrow \mathbb{R};~&\psi ~ 
\text{is a regulated and quasiconvex function}\},
%\end{split}
\end{align*}
\begin{align*}
%\begin{split}
&h:\mathcal{A}\rightarrow  \mathbb{R}\\
&h(\psi ) =\Big\vert \min \lbrace F_{\psi}(x)\vert ~x \in (M_1,M_2]\rbrace - F_{\psi}(a_1) \Big\vert^2\\
&~~~~~~~~+ \Big\vert \min \lbrace G_{\psi}(x)\vert ~x \in (M_1,M_2] \rbrace - G_{\psi}(a_2)\Big\vert^2
%\end{split}
\end{align*}
where $F_{\psi}(x)  = (x - b_1)^2 + \psi(x) $ and $G_{\psi}(x) =(x-b_2)^2 
+\psi(x) $.
Then there exists a sequence of step functions $\lbrace \psi_n \rbrace_{n \in \mathbb{N}}$ in $\mathcal{A}$ such that 
$$h(\psi_n)\longrightarrow \inf_{\psi \in \mathcal{A}} h(\psi) \qquad \text{as} \qquad n\rightarrow 
\infty.$$
\end{proposition}
\begin{proof}
Suppose $S_n=\{1,\cdots ,(M_2-M_1)\times 2^n\}$ and $\lbrace 
\epsilon _n \rbrace_{n=1}^{\infty} $ is  a sequence such that $\lim 
_{n\rightarrow \infty} \epsilon _n =0$. Define the set $D_n$ by 
\begin{align*}
%\begin{split}
D_n=\Big\{ &(x_1,x_2, \cdots , x_{(M_2-M_1) \times 2^n}) \in 
\mathbb{R}^{(M_2-M_1) \times 2^n}\big |\\
& \forall (r,s,t) \in \lbrace (r,s,t)\big |~r,s,t \in S_n,~ r<s<t \rbrace ;~x_s\leq \max \{x_r,x_t \}
\Big\}.
%\end{split}
\end{align*}
Also suppose $t_1=\left[ (b_1-M_1)\times 2^n \right] +1$, $s_1=\left[ 
(a_1-M_1)\times 2^n \right] +1$, $t_2=\left[ (b_2-M_1)\times 2^n \right] 
+1$, $s_2=\left[ (a_2-M_1)\times 2^n \right] +1$ and define the function $K_n$ 
from $D_n$ to $\mathbb{R}$ by 
\begin{align*}
K_n\left(x_1,x_2, \cdots , x_{(M_2-M_1) \times 2^n} \right) &=\Big\vert \min 
\lbrace (M_1 + \dfrac{t}{2^n}-b_1)^2+x_t -(a_1-b_1)^2 -x_{s_1} \big |~1\leq t 
\leq t_1-1 \rbrace \\
&\cup \lbrace x_{t_1}-(a_1-b_1)^2 -x_{s_1} \rbrace \\
&\cup \lbrace (M_1 + \dfrac{t-1}{2^n}+\epsilon_n -b_1)^2 +x_t -(a_1-b_1)^2 
-x_{s_1}\big |\\
&t_1+1\leq t \leq (M_2-M_1) \times 2^n \rbrace \Big\vert^2\\
&+ \Big\vert \min \lbrace (M_1 + \dfrac{t}{2^n}-b_2)^2+x_t -(a_2-b_2)^2 
-x_{s_2} \big |~1\leq t \leq t_2-1 \rbrace \\
&\cup \lbrace x_{t_2}-(a_2-b_2)^2 -x_{s_2} \rbrace \\
& \cup \{ (M_1 + \dfrac{t-1}{2^n}+\epsilon_n -b_2)^2 +x_t -(a_2-b_2)^2 
-x_{s_2}\big |\\
&t_2+1\leq t \leq (M_2-M_1) \times 2^n \} \Big\vert ^2.
\end{align*}
%%%%%%
Let $(\mathcal{C}_1^n , \mathcal{C}_2^n , \cdots , \mathcal{C}^n_{(M_2-M_1) 
	\times 2^n})$ be a minimizer of the function $K_n$ for every  $n \in 
\mathbb{N}$. Define the step function  $\psi _n$ for every  $n \in \mathbb{N}$ 
on $(M_1,M_2]$ as follows:
\begin{equation*}
\psi _n = \sum_{t=1}^{(M_2-M_1) \times 2^n} \mathcal{C}_t^n 
\chi_{{A}_t}
\end{equation*}
such that 
$${A}_t = \left( M_1 + \dfrac{t-1}{2^n} , M_1 + \dfrac{t}{2^n} 
\right];~1\leq t \leq (M_2-M_1) \times 2^n .$$
Now, we define the sets  $\mathcal{A}_n$ for  $n \in \mathbb{N}$:
\begin{align*}
%\begin{split}
\mathcal{A}_n&= \{\varphi \big |~ \varphi = \sum_{t=1}^{(M_2-M_1) \times 2^n} 
x_t \chi_{{A}_t};~ \forall r,s,t \in S_n,~ r<s<t  ;~x_s\leq \max \{x_r,x_t \} \}.
%\end{split}
\end{align*}
Given {\it Fact 1}, it will be enough to prove the following items:
%\begin{enumerate}[label=(\roman*)]
\begin{itemize}
	\item[{\it (i)}] 
	$\mathcal{A}$ with {\it Supremum metric} is a metric space.
	\item[{\it (ii)}]
	$h:\mathcal{A}\rightarrow  \mathbb{R}$ is a continuous function and bounded 
	from below.
	\item[{\it (iii)}]
	$\mathcal{A}_n \subseteq \mathcal{A}_{n+1}$ for every  $n \in \mathbb{N}$.
	\item[{\it (iv)}]
	$\overline{\cup_{n=1}^{\infty} \mathcal{A}_n}=\mathcal{A}.$
	\item[{\it (v)}]
	$\vert h(\psi_n)- \inf_{\psi \in \mathcal{A}_n} h(\psi) \vert  $ 
	tends to $0$ as ~$n \rightarrow \infty$.
	%\end{enumerate}
\end{itemize}
$(i),(ii)$ and  $(iii)$ are clearly correct and we have  $(iv)$ by 
%\eqref{lem2-1}.
Lemma 2.1.  It remains to prove only $(v)$: Pick arbitrary $\varphi \in \mathcal{A}_n$ and suppose $\left(x_1,x_2, \cdots , 
x_{(M_2-M_1) \times 2^n} \right) \in D_n$ such that $\varphi(x)= 
\sum_{t=1}^{(M_2-M_1) \times 2^n} x_t \chi_{{A}_t} (x)$ for  $x \in 
(M_1,M_2]$. We have 
$$F_{\varphi}(x)=(x-b_1)^2 + \varphi (x);\qquad x \in \left( M_1,M_2\right] $$
or equivalently,
$$F_{\varphi}(x)=(x-b_1)^2 + x_t;\quad x \in {A}_t=\left( M_1 + 
\dfrac{t-1}{2^n} , M_1 + \dfrac{t}{2^n} \right];~1\leq t \leq (M_2-M_1) \times 
2^n .$$
We consider separately two  different cases:\\

\textbf{Case 1}. $x \in (M_1,b_1]$. Because in this case the function  
$F_{\varphi}\big |_{{A}_t}$ is decreasing, the minimum value of the 
function over the interval ${A}_t$ will be at the end point $M_1 + 
\dfrac{t}{2^n} $ and consequently 
\begin{equation*}
\min_{x \in {A}_t} F_{\varphi} (x)=\left( M_1 + \dfrac{t}{2^n}  - b_1 
\right)^2 + x_t.
\end{equation*}
Since $(M_1,b_1]=\cup_{t=1}^{t_1-1} {A}_t \cup (M_1 + 
\dfrac{t_1-1}{2^n}, b_1] $ such that $t_1=\left[ (b_1-M_1)\times 2^n \right] 
+1$, we have 
%%%%%%%%%%%%%%%%%%%
\begin{align*}
\min_{x\in (M_1,b_1]}
F_{\varphi}(x) = \min[ \lbrace \left( M_1 + \dfrac{t}{2^n}  - b_1 
\right)^2 + x_t \big |~ 1\leq t \leq t_1-1 \rbrace \cup \lbrace x_{t_1} 
\rbrace]. 
\tag*{$(2\cdot 1)$} 
\end{align*}

\textbf{Case 2}. $x\in (b_1,M_2]$. Because in this case the function  
$F_{\varphi}\big |_{{A}_t}$ is increasing, the minimum value of the 
function over the interval  ${A}_t$ will be at the endpoint $M_1 + 
\dfrac{t-1}{2^n}$. However, since  $M_1 + \dfrac{t-1}{2^n} \not \in 
{A}_t $, we consider  $M_1 + \dfrac{t-1}{2^n}  + \epsilon_n$ as an 
approximation for the minimizer of  $F_{\varphi}\big |_{{A}_t}$ and 
hence 
$$\min_{x \in {A}_t} F_{\varphi}(x) \simeq (M_1 + \dfrac{t-1}{2^n}  + 
\epsilon_n - b_1)^2 + x_t.$$
Since $(b_1, M_2]=(b_1,M_1 + \dfrac{t_1}{2^n}) \cup
\displaystyle{\cup_{t=t_1+1}^{(M2-M_1)\times 2^n}} {A}_t $, we have 
%%%%%%%%%%%%%%
\begin{align*}
\min_{x\in (b_1,M_2]}
F_{\varphi}(x) \simeq \min[&\lbrace (M_1+\dfrac{t-1}{2^n}+\epsilon_n
- b_1)^2 + x_t \big| ~t_1+1\leq t \leq (M_2-M_1)\times
2^n \rbrace \\ & \cup \lbrace 
\epsilon_n + x_{t_1} \rbrace ].
\tag*{$(2\cdot 2)$}
\end{align*}

On the other hand, since $a_1\in (M_1+\dfrac{[(a_1-M_1) \times 2^n ]}{2^n 
},M_1+\dfrac{[(a_1-M_1) \times 2^n ]+1}{2^n }]$, it follows $a_1 \in 
{A}_{s_1}$ such that  $s_1=[(a_1-M_1) \times 2^n ]+1$ and therefore,
\begin{equation*}
%\label{eq2-3}
F_{\varphi}(a_1) = (a_1-b_1)^2 + x_{s_1}.
\tag*{$(2\cdot 3)$}
\end{equation*}

We deduce from  (2.1), (2.2)
%\eqref{eq2-1}, \eqref{eq2-2} 
and (2.3) 
% \eqref{eq2-3} 
that
\begin{align*}
%\label{eq2-4}
%\begin{split}
\Big\vert  &\min_{x \in (M_1,M_2]} F_{\varphi}(x) -F_{\varphi}(a_1) \vert 
\simeq \vert \min \lbrace (M_1+\dfrac{t}{2^n}-b_1)^2+x_t -(a_1-b_1)^2 
-x_{s_1}\big |~1\leq t \leq t_1-1 \rbrace \\
& \cup \lbrace x_{t_1} - (a_1-b_1)^2 - x_{s_1} \rbrace \cup \lbrace 
(M_1+\dfrac{t-1}{2^n}+\epsilon_n -b_1)^2+x_t -(a_1-b_1)^2 -x_{s_1}\big |\\
&t_1+1\leq t \leq (M_2-M_1)\times 2^n \rbrace\Big\vert.
\tag*{$(2\cdot 4)$}
%\end{split}
\end{align*}

By a similar process like what was done for $F_{\varphi}$, we will have:
\begin{align*}
%\label{eq2-5}
%\begin{split}
\Big\vert  &\min_{x \in (M_1,M_2]} G_{\varphi}(x) -G_{\varphi}(a_2) \Big\vert 
\simeq \Big\vert \min \lbrace (M_1+\dfrac{t}{2^n}-b_2)^2+x_t -(a_2-b_2)^2 
-x_{s_2}\big |~1\leq t \leq t_2-1 \rbrace \\
& \cup \lbrace x_{t_2} - (a_2-b_2)^2 - x_{s_2} \rbrace \cup \lbrace 
(M_1+\dfrac{t-1}{2^n}+\epsilon_n -b_2)^2+x_t -(a_2-b_2)^2 -x_{s_2}\big |\\
&t_2+1\leq t \leq (M_2-M_1)\times 2^n \rbrace \Big\vert.
\tag*{$(2\cdot 5)$}
%\end{split}
\end{align*}
Then since $\varphi = \sum_{t=1}^{(M_2-M_1)\times 2^n}x_t 
\chi_{{A}_t}$, by (2.4), (2.5), 
%\eqref{eq2-4},
% \eqref{eq2-5} ,
$$h\big |_{\mathcal{A}_n}(\varphi)\simeq K_n\left(x_1,x_2, \cdots , 
x_{(M_2-M_1) \times 2^n} \right) .$$
Consequently 
$$h(\psi_n)\simeq \inf _{\psi \in \mathcal{A}_n}h(\psi).$$
\\
Therefore, since  $\epsilon_n$~tends to $0$~as $n$ tends to $\infty$, we 
conclude:
\begin{equation*}
\Big\vert  h(\psi_n)- \inf _{\psi \in \mathcal{A}_n}h(\psi) \Big\vert 
\rightarrow 0\quad  \text{as} \quad n\rightarrow \infty .
\end{equation*}
\end{proof}

\begin{remark}\label{2.4}
%Remark 2.4.
To prove Theorem 2.5, a lemma is needed with assumptions weaker than those of Lemma 2.2, considering $\inf_{\psi \in \overline{\cup_{n=1}^{\infty} \mathcal{A}_n}}h_1( \psi)\geq 0$ instead of $\inf_{\psi \in \overline{\cup_{n=1}^{\infty} \mathcal{A}_n}}h_1( \psi)=0$ in Lemma 2.2. 
 Although it is not needed to provide more conditions to prove Lemma 2.2 than quasiconvexity for the step functions in the sets $\mathcal{A}_n$ for $n\in \mathbb{N}$ as well as $f^*,~g^*$ (introduced in Lemma 2.2) being strictly quasiconvex, we cannot be sure about the general case $\inf_{\psi \in \overline{\cup_{n=1}^{\infty} \mathcal{A}_n}}h_1( \psi)\geq 0$ that doesn't need more conditions. 
 Moreover, a condition to ensure that  $f^*$, $g^*$ (introduced in Lemma 2.2) be strictly quasiconvex will not be included in the remainder of the paper, because replacing inequalities with strict inequalities in the set $D_n$ in  Theorem 2.5 and consequently in (2.6), (2.7) in Example 2.6 - however not completely addressing theoretical concerns - will provide a sufficient guarantee for desired outcomes in computational works. 
  
However, we will proceed with the paper only by considering the quasiconvexity condition for the step functions in $\mathcal{A}_n$  for every $n\in \mathbb{N}$, because if the lemma needs to provide more conditions than the quasiconvexity on the step functions for completion to occur, there will be slight changes in the process of continuing the paper - adding the new conditions to the ones of the functions in sets $D_n,\textbf{C}$ in  Theorem 2.5, and consequently adding these conditions to the constraints of the constrained optimization problems in (2.6), (2.7) in Example 2.6, is needed. 
\end{remark}
For the remainder of this paper, suppose $\mathcal{H}$
be a Hilbert space and $(\varphi_\gamma)_{\gamma\in\Gamma}$     
be an orthonormal basis for $\mathcal{H}$
and $\mathcal{H}_{(M_1,M_2]}$ be as defined in {\it Fact 4}. Furthermore, we 
use the shorthand notation $f_\gamma$
for  $\langle f,\varphi_\gamma\rangle, (f_1)_{\gamma}$ 
for $\langle f_1,\varphi_\gamma\rangle$,  etc. and consider $\langle 
f,\varphi_\gamma\rangle\in \mathbb{R}$
for every $f\in \mathcal{H}$.

As mentioned in the introduction, the goal of this section is to find a 
denoising model by using the signals sets $\{f_1,\cdots ,f_m\}$
and $\{g_1,\cdots ,g_m\}$ where $m\in \mathbb{N}$. We will do it in the 
following theorem, however, for convenience, we will consider $m=2$. General 
condition will be achieved with some slight changes.  

\begin{theorem}\label{2.5}
%\begin{Theorem}\label{th2-5}
Suppose $f_1,f_2,g_1,g_2 \in \mathcal{H}$ and $M_1, M_2 \in \mathbb{Z}$ is  
large enough such that $( f_1 )_{\gamma},( f_2 )_{\gamma},( g_1 )_{\gamma},( 
g_2 )_{\gamma} \in (M_1,M_2]$ for every $\gamma \in \Gamma$. 
Moreover, define the set  $\textbf{C}$ and the function  $\mathfrak{I}$  from  
$\textbf{C}$ to $\mathbb{R}$ by
%%%%%%%%%%%%%%%%%5
\begin{align*}
%\begin{split}
\textbf{C}=\lbrace \psi:\mathcal{H} \rightarrow \mathbb{R}\big 
|~&\psi(f)=\sum_{\gamma \in \Gamma} \psi_{\gamma}(f_{\gamma});~ \forall \gamma 
\in \Gamma:~\psi_{\gamma}:(M_1,M_2]\rightarrow \mathbb{R}\\
~ &\text{is a regulated and quasiconvex function}\rbrace ,\\
&\mathfrak{I}:\textbf{C}\longrightarrow \mathbb{R}\\
&\mathfrak{I}(\psi)=\sum_{i=1}^2 \Vert \tilde{f}_{i,\psi} - f_i \Vert^2\\
&;\tilde{f}_{i,\psi} =\text{arg-min}_{f
	\in \mathcal{H}_{(M_1,M_2]}} 
\Vert f- g_i \Vert ^2 +\psi(f).
%\end{split}
\end{align*}
Then there exists a sequence $\lbrace\psi_n\rbrace_{n \in {\mathbb{N}}}$ in $\textbf{C}$ such that the functions $\lbrace\psi_\gamma\rbrace_{\gamma \in {\Gamma}}$ corresponding to each $\psi_n$ are step functions and
$$\mathfrak{I}(\psi_n)\longrightarrow \inf_{\psi \in 
	\textbf{C}}\mathfrak{I}(\psi)\quad \text{as} \quad n\rightarrow \infty.$$
\end{theorem}
%\end{Theorem}
%%%%%%%%%%%%%%%%%

\begin{proof}
Suppose $S_n=\{1,\cdots ,(M_2-M_1)\times 2^n\}$ and 
$\lbrace \epsilon_n \rbrace_{n=1}^{\infty}$ is a sequence such that $\lim 
_{n\rightarrow \infty}\epsilon_n =0$. Define the set $D_n$by
\begin{align*}
%\begin{split}
D_n= \Big\{& (x_1,x_2, \cdots , x_{(M_2-M_1) \times 2^n}) \in 
\mathbb{R}^{(M_2-M_1) \times 2^n}\big |\\
& \forall (r,s,t) \in \lbrace (r,s,t)\big |~r,s,t \in S_n,~ r<s<t \rbrace ;~x_s\leq \max \{x_r,x_t \} \Big\}.
%\end{split}
\end{align*}
Also, suppose $t_{1,\gamma}=[(( g_1 )_{\gamma} -M_1)\times 
2^n]+1$, $s_{1,\gamma}=[((
 f_1 )_{\gamma} -M_1)\times 2^n]+1$, $t_{2,\gamma}=[(( 
g_2 )_{\gamma} -M_1)\times 2^n]+1$, $s_{2,\gamma}=[(( f_2 )_{\gamma} 
-M_1)\times 2^n]+1$ and define the function  $K_n^{\gamma}$ from $D_n$ to 
$\mathbb{R}$ for every $\gamma \in \Gamma$ by\\
\begin{align*}
%\begin{split}
K_n^{\gamma} \left(x_1,x_2, \cdots , x_{(M_2-M_1) \times 2^n} \right) 
&=\Big\vert \min \lbrace (M_1 + \dfrac{t}{2^n}-( g_1 )_{\gamma})^2+x_t -(( f_1 
)_{\gamma}-( g_1 )_{\gamma})^2 -x_{s_{1,\gamma}} \big | \\
&~1\leq t \leq t_{1,\gamma}-1 \rbrace \cup \lbrace x_{t_{1,\gamma}}-(( f_1 
)_{\gamma}-( g_1 )_{\gamma})^2 -x_{s_{1,\gamma}} \rbrace \\
&\cup \lbrace (M_1 + \dfrac{t-1}{2^n}+\epsilon_n -( g_1 )_{\gamma})^2 +x_t -(( 
f_1 )_{\gamma}-( g_1 )_{\gamma})^2 -x_{s_{1,\gamma}}\big |\\
&t_{1,\gamma}+1\leq t \leq (M_2-M_1) \times 2^n \rbrace \Big\vert^2\\
&+ \Big\vert \min \lbrace (M_1 + \dfrac{t}{2^n}-( g_2 )_{\gamma})^2+x_t -(( f_2 
)_{\gamma}-( g_2 )_{\gamma})^2 -x_{s_{2,\gamma}} \big |\\
&~1\leq t \leq t_{2,\gamma}-1 \rbrace  \cup \lbrace x_{t_{2,\gamma}}-(( f_2 
)_{\gamma}-( g_2 )_{\gamma})^2 -x_{s_{2,\gamma}} \rbrace \\
& \cup \{ (M_1 + \dfrac{t-1}{2^n}+\epsilon_n -( g_2 )_{\gamma})^2 +x_t -(( f_2 
)_{\gamma}-( g_2 )_{\gamma})^2 -x_{s_{2,\gamma}}\big |\\
&t_{2,\gamma}+1\leq t \leq (M_2-M_1) \times 2^n \} \Big\vert ^2.
%\end{split}
\end{align*}
Let $\left(\mathcal{C}_1^{n,\gamma},\mathcal{C}_2^{n,\gamma}, \cdots , 
\mathcal{C}_{(M_2-M_1) \times 2^n}^{n,\gamma} \right)$ be a minimizer of the 
function $K_n^{\gamma}$ for every $n\in \mathbb{N}$. Define the function 
$\psi_n$ for every $n\in \mathbb{N}$, from $\mathcal{H}$ to $\mathbb{R}$ as 
follows:
\begin{align*}
%\begin{split}
&\psi_n:\mathcal{H}\rightarrow \mathbb{R}\\
&\psi_n(f)=\sum_{\gamma \in \Gamma} \psi_{n,\gamma}(f_{\gamma}),
%\end{split}
\end{align*}
where  $\psi_{n,\gamma}$ for every $n\in \mathbb{N}$ and $\gamma \in \Gamma$ is 
a step function from $(M_1,M_2]$ to  $\mathbb{R}$ which is defined by 
\begin{equation*}
\psi_{n,\gamma} = \sum_{t=1}^{(M_2-M_1) \times 2^n} \mathcal{C}_t^{n,\gamma} 
\chi_{{A}_t}
\end{equation*}
such that 
${A}_t = \left( M_1 + \dfrac{t-1}{2^n} , M_1 + \dfrac{t}{2^n} 
\right];~1\leq t \leq (M_2-M_1) \times 2^n .$
Now, we define the set  $\mathcal{A}$:
\begin{equation*}
\mathcal{A}=\{ \psi \big |~\psi:(M_1,M_2]\rightarrow \mathbb{R},~\psi ~ 
\text{is a regulated and quasiconvex function}\}.
\end{equation*}
Also the functions  $F_{\psi}^{\gamma}$ and  $G_{\psi}^{\gamma}$ from 
$\mathbb{R}$ to themselves are defined by 
$$F_{\psi}^{\gamma}(x)=\left( (g_1)_{\gamma} -x \right)^2 + \psi(x), \quad 
G_{\psi}^{\gamma}(x)=\left( (g_2)_{\gamma} -x \right)^2 + \psi(x).$$
By Proposition 2.3
%\eqref{pro2-2}
for every $\gamma\in \Gamma$ we have :
\begin{align*}
%\begin{split}
&\Big\vert F_{\psi_{n ,\gamma}}^{\gamma}((f_1)_{\gamma}) - \min_{x \in 
	(M_1,M_2]}F_{\psi_{n ,\gamma}}^{\gamma}(x) \Big\vert^2 + \Big\vert 
G_{\psi_{n ,\gamma}}^{\gamma}((f_2)_{\gamma})
- \min_{x \in (M_1,M_2]}G_{\psi_{n ,\gamma}}^{\gamma}(x) \Big\vert^2\\
&\longrightarrow \inf_{\psi_{\gamma} \in \mathcal{A}}\Big\vert 
F_{\psi_\gamma} 
^{\gamma}((f_1)_{\gamma})-\min_{x \in (M_1,M_2]}F_{\psi_\gamma }
^{\gamma}(x) \Big\vert^2 +\Big\vert G_{\psi_\gamma }
^{\gamma}((f_2)_{\gamma}) - \min_{x \in (M_1,M_2]}G_{\psi_\gamma} 
^{\gamma}(x) \Big\vert^2\\ 
& \qquad \qquad \qquad  \qquad \qquad \qquad  \text{as} \quad n\rightarrow 
\infty.
%\end{split}
\end{align*}
Since by {\it Fact 2} $~\psi_{n,\gamma}$ is quasiconvex for every  $\gamma \in 
\Gamma$, we deduce from Lemma 2.2 and Remark  2.4
%\eqref{rem2-4} 
that 
%%%
\begin{align*}
%\begin{split}
&\Big\vert (f_1)_{\gamma} -\text{arg-min}_{x \in 
	(M_1,M_2]}\left[(x-(g_1)_{\gamma})^2+\psi_{n ,\gamma}(x) \right] 
\Big\vert^2\\
&+\Big\vert (f_2)_{\gamma} -\text{arg-min}_{x \in 
	(M_1,M_2]}\left[(x-(g_2)_{\gamma})^2+\psi_{n ,\gamma}(x) \right] 
\Big\vert^2 \\
&\longrightarrow \inf_{\psi_{\gamma} \in \mathcal{A}}\Big\vert (f_1)_{\gamma} 
-\text{arg-min}_{x
	\in (M_1,M_2]}\left[(x-(g_1)_{\gamma})^2+\psi_{\gamma}(x) 
\right] \Big\vert^2 \\
&+\Big\vert (f_2)_{\gamma} -\text{arg-min}_{x \in 
	(M_1,M_2]}\left[(x-(g_2)_{\gamma})^2+\psi_{\gamma}(x) \right] 
\Big\vert^2 \\
& \qquad \qquad \qquad  \qquad \qquad \qquad  \text{as} \quad n\rightarrow 
\infty.
%\end{split}
\end{align*}
This implies that
\begin{align*}
%\begin{split}
&\sum_{\gamma \in \Gamma}\Big\vert (f_1)_{\gamma} -\text{arg-min}_{x \in 
	(M_1,M_2]}\left[(x-(g_1)_{\gamma})^2+\psi_{n ,\gamma}(x) \right] 
\Big\vert^2 \\
&+\sum_{\gamma \in \Gamma}\Big\vert (f_2)_{\gamma} -\text{arg-min}_{x \in 
	(M_1,M_2]}\left[(x-(g_2)_{\gamma})^2+\psi_{n ,\gamma}(x) \right] 
\Big\vert^2 \\
&\longrightarrow \inf_{\lbrace \psi_{\gamma}\rbrace_{\gamma  \in 
		\Gamma}\subseteq \mathcal{A}}\sum_{\gamma \in \Gamma}\Big\vert 
(f_1)_{\gamma} 
-\text{arg-min}_{x
	\in (M_1,M_2]}\left[(x-(g_1)_{\gamma})^2+\psi_{\gamma}(x) 
\right] \Big\vert^2 \\
&+\sum_{\gamma \in \Gamma} \Big\vert (f_2)_{\gamma} -\text{arg-min}_{x \in 
	(M_1,M_2]}\left[(x-(g_2)_{\gamma})^2+\psi_{\gamma}(x) \right] 
\Big\vert^2 \\
& \qquad \qquad \qquad  \qquad \qquad \qquad  \text{as} \quad n\rightarrow 
\infty.
%\end{split}
\end{align*}
Then by {\it Fact 3}, we will have
\begin{align*}
%\begin{split}
&\Big\Vert \sum_{\gamma \in \Gamma}\left[(f_1)_{\gamma} -\text{arg-min}_{x
	\in 
	(M_1,M_2]}(x-(g_1)_{\gamma})^2+\psi_{n ,\gamma}(x)  
\right]\varphi_{\gamma} \Big\Vert^2\\
&+\Big\Vert\sum_{\gamma\in\Gamma}
\left[(f_2)_{\gamma} -\text{arg-min}_{x \in 
	(M_1,M_2]}(x-(g_2)_{\gamma})^2+\psi_{n ,\gamma}(x) \right] 
\varphi_{\gamma} \Big\Vert^2 \\
&\longrightarrow \inf_{\lbrace \psi_{\gamma}\rbrace_{\gamma  \in 
		\Gamma}\subseteq \mathcal{A}} \Big\Vert \sum_{\gamma \in \Gamma} 
\left[(f_1)_{\gamma} -\text{arg-min}_{x
	\in (M_1,M_2]}(x-(g_1)_{\gamma})^2+\psi_{\gamma}(x) 
\right]\varphi_{\gamma} \Big\Vert^2 \\
&+
\Big\Vert\sum_{\gamma \in \Gamma}  \left[ (f_2)_{\gamma} -\text{arg-min}_{x
	\in 
	(M_1,M_2]}(x-(g_2)_{\gamma})^2+\psi_{\gamma}(x) 
\right]\varphi_{\gamma} \Big\Vert^2 \\
& \qquad \qquad \qquad  \qquad \qquad \qquad  \text{as} \quad n\rightarrow 
\infty.
%\end{split}
\end{align*}
Therefore, by {\it Fact 4} we have
\begin{align*}
%\begin{split}
&\Big\Vert f_1 -\text{arg-min}_{f
	\in \mathcal{H}_{(M_1,M_2]}}\sum_{\gamma \in 
	\Gamma}(f_{\gamma}-(g_1)_{\gamma})^2+\sum_{\gamma \in \Gamma}\psi_{n 
	,\gamma}(f_{\gamma}) \Big\Vert^2 \\
&+\Big\Vert f_2 -\text{arg-min}_{f
	\in \mathcal{H}_{(M_1,M_2]}}\sum_{\gamma\in\Gamma}(f_{\gamma}-
(g_2)_{\gamma})^2+\sum_{\gamma\in\Gamma}
\psi_{n ,\gamma}(f_{\gamma}) \Big\Vert^2 
\\
&\longrightarrow \inf_{\lbrace \psi_{\gamma}\rbrace_{\gamma  \in 
		\Gamma}\subseteq \mathcal{A}}\Big\Vert f_1 -\text{arg-min}_{f \in 
	\mathcal{H}_{(M_1,M_2]}}\sum_{\gamma \in \Gamma}(f_{\gamma}-
(g_1)_{\gamma})^2+\sum_{\gamma \in \Gamma}\psi_{\gamma}(f_{\gamma})
\Big\Vert^2 \\
&+ \Big\Vert f_2 -\text{arg-min}
_{f \in \mathcal{H}_{(M_1,M_2]}}\sum_{\gamma\in\Gamma}(f_{\gamma}-
(g_2)_{\gamma})^2+\sum_{\gamma\in\Gamma}\psi_{\gamma}
(f_{\gamma})  \Big\Vert^2 \\
& \qquad \qquad \qquad  \qquad \qquad \qquad  \text{as} \quad n\rightarrow 
\infty.
%\end{split}
\end{align*}
Finally, since 
$ \textbf{C}=\lbrace \psi \big |~\psi(f)=\sum_{\gamma \in \Gamma} 
\psi_{\gamma}(f_{\gamma});~\psi_{\gamma} \in \mathcal{A}~\text{for every} 
~\gamma \in \Gamma \rbrace ,$
we conclude:
\begin{align*}
%\begin{split}
&\Big\Vert f_1 -\text{arg-min}_{f
	\in \mathcal{H}_{(M_1,M_2]}} \Vert f-g_1 \Vert^2+\psi_n(f) \Big\Vert^2\\
&+\Big\Vert f_2 -\text{arg-min}_{f \in 
	\mathcal{H}_{(M_1,M_2]}} \Vert f-g_2 \Vert^2 +\psi_n(f) \Big\Vert^2 \\
&\longrightarrow 
\inf_{\psi \in \textbf{C}} \Big\Vert f_1 -\text{arg-min}_{f \in 
	\mathcal{H}_{(M_1,M_2]}} \Vert f-g_1 \Vert^2 +\psi(f) \Big\Vert^2\\
&+\Big\Vert f_2 -\text{arg-min}_{f \in \mathcal{H}_{(M_1,M_2]}} \Vert f-g_2 
\Vert^2 
+\psi(f) \Big\Vert^2 \\
& \qquad \qquad \qquad  \qquad \qquad \qquad  \text{as} \quad n\rightarrow 
\infty .
%\end{split}
\end{align*}
%}
%\end{prooff}
\end{proof}
%%%%%%%%%%%%%%%%%%%%%%%%%%%%%%%
In  mathematical optimiztion, {\it Constrained Optimization} is the process of 
optimizating an objective function with respect to some variables in the 
presence of constraints on those variables. A general constrained minimization 
problem may be written as follows: 
\begin{align*}
& \text{min}  \quad f(x)\\
\text{subject to}&~~ g_i(x)=c_i~~ \text{for}~~ i=1,\cdots , n 
~~~\text{\it ``Equality constraints"}\\
&~~ h_j(x)\geq d_j~ \text{for}~ j=1,\cdots , m ~~~\text{\it ``Inequality constraints"}
\end{align*}
where $g_i(x)=c_i$ for $i=1,\cdots , n$ and $h_j(x)\geq d_j$ for $j=1,\cdots 
,m$  are constraints that are required to be satisfied, and $f(x)$ is the 
objective function that needs to be optimized subject to the constraints. 

Several optimization problems with inequality constraints need to be solved to 
successfully achieve each step function provided in Theorem 2.5. The 
following example serve to the clarification of this:
\begin{example}\label{2.6}Suppose that in the assumptions of Theorem 2.5, 
%\eqref{th2-4}
$\Gamma=\lbrace 
\gamma_1,\gamma_2 \rbrace ,M_1=1,M_2=3, \epsilon_n = \dfrac{1}{2^n}$ and  
$(f_1)_{\gamma_1},(f_2)_{\gamma_1},(g_1)_{\gamma_1},(g_2)_{\gamma_1},
(f_1)_{\gamma_2},(f_2)_{\gamma_2},(g_1)_{\gamma_2},(g_2)_{\gamma_2}$ are as 
follows:
\begin{align*}
&(f_1)_{\gamma_1}=2,\quad (f_2)_{\gamma_1}=2.8,\quad
(g_1)_{\gamma_1}=1.3,\quad 
(g_2)_{\gamma_1}=2.1\\
&(f_1)_{\gamma_2}=1.7,\quad (f_2)_{\gamma_2}=1.1,\quad 
(g_1)_{\gamma_2}=2.5,\quad (g_2)_{\gamma_2}=2.9.
\end{align*}
Then we will have:
$$\psi_{1,\gamma_1}=\sum_{t=1}^4 \mathcal{C}_t^{1,\gamma_1}\chi_{{A}_t},
\qquad \psi_{1,\gamma_2}=\sum_{t=1}^4 \mathcal{C}_t^{1,\gamma_2}\chi_{{A
	}_t}$$
where 
${A}_t=(1+\dfrac{t-1}{2},1+\dfrac{t}{2}];~1\leq t\leq 
4$,~$(\mathcal{C}_1^{1,\gamma_1},\mathcal{C}_2^{1,\gamma_1},\mathcal{C}_3^{1,
	\gamma_1},\mathcal{C}_4^{1,\gamma_1})$ and
$(\mathcal{C}_1^{1,\gamma_2},\mathcal{C}
_2^{1,\gamma_2},\mathcal{C}_3^{1,\gamma_2},\mathcal{C}_4^{1,\gamma_2})$ are 
respectively minimizer of the functions  $K_1^{\gamma_1},K_1^{\gamma_2}$ on 
$D_1$ which are  as follows:
\begin{align*}
%\begin{split}
&D_1=\Big\lbrace (x_1,x_2,x_3,x_4) \in \mathbb{R}^4\big |~x_2\leq \max \lbrace 
x_1,x_3 \rbrace, x_2\leq \max \lbrace x_1,x_4 \rbrace, \\
& \quad \quad \quad \quad \quad \quad \quad \quad \quad \quad \quad \quad x_3\leq \max \lbrace x_1,x_4 \rbrace, x_3\leq \max \lbrace x_2,x_4 \rbrace 
\Big\rbrace, \\
\\
&K_1^{\gamma_1} (x_1,x_2,x_3,x_4)  = \Big\vert \min \lbrace x_1-x_3-4.9 
\rbrace \cup \lbrace (\dfrac{t}{2}-0.3)^2 +x_t-X_3 -4.9\big |~1\leq t\leq 4 
\rbrace\Big\vert^2 \\
&+\Big\vert  \min \lbrace (\dfrac{t}{2}-1.1)^2
+x_t-x_4 -4.9\big |~1\leq t\leq 
2 \rbrace \cup \lbrace x_3-x_4 -4.9 \rbrace \cup \lbrace (\dfrac{t}{2}-1.1)^2  
-4.9 \rbrace \Big\vert^2 ,\\
\\
&K_1^{\gamma_2} (x_1,x_2,x_3,x_4)  = \Big\vert \min \lbrace 
(\dfrac{t}{2}-1.5)^2 +x_t -0.64-x_2\big |~1\leq t\leq 3 \rbrace \cup \lbrace 
x_4 -0.64 -x_2 \rbrace \Big\vert^2 \\
&+ \Big\vert  \min \lbrace (\dfrac{t}{2}-1.1)^2 -x_t-x_1 -3.24\big |~1\leq 
t\leq 3 \rbrace \cup \lbrace x_4-x_1 -3.24 \rbrace \Big\vert^2 .
%\end{split}
\end{align*}
Since $Max \lbrace a,b \rbrace = \dfrac{\vert a-b \vert + (a+b)}{2}$ , 
~$(\mathcal{C}_1^{1,\gamma_1},\mathcal{C}_2^{1,\gamma_1},\mathcal{C}_3^{1,\gamma
	_1},\mathcal{C}_4^{1,\gamma_1})$ and $(\mathcal{C}_1^{1,\gamma_2},\mathcal{C}_2
^
{1,\gamma_2},\mathcal{C}_3^{1,\gamma_2},\mathcal{C}_4^{1,\gamma_2})$ can be 
considered as the solution of the following constrained optimization problems:
\begin{align*}
%\begin{split}
&\min K_1^{\gamma_1} (x_1,x_2,x_3,x_4) \\
\text{subject to}\quad & \vert x_1 - x_3 \vert + x_1 + x_3 -2x_2\geq 0\\
& \vert x_1 - x_4 \vert + x_1 + x_4 -2x_2\geq 0\\
& \vert x_1 - x_4 \vert + x_1 + x_4 -2x_3\geq 0\\
& \vert x_2 - x_4 \vert + x_2 + x_4 -2x_3\geq 0,
%\end{split}
\tag*{$(2\cdot 6)$}
\end{align*}
\begin{align*}
%\begin{split}
&\min K_1^{\gamma_2} (x_1,x_2,x_3,x_4) \\
\text{subject to}\quad & \vert x_1 - x_3 \vert + x_1 + x_3 -2x_2\geq 0\\
& \vert x_1 - x_4 \vert + x_1 + x_4 -2x_2\geq 0\\
& \vert x_1 - x_4 \vert + x_1 + x_4 -2x_3\geq 0\\
& \vert x_2 - x_4 \vert + x_2 + x_4 -2x_3\geq 0.
%\end{split}
\tag*{$(2\cdot 7)$}
\end{align*}
\end{example}
The functions $\psi_\gamma$
for every $\gamma\in\Gamma$
in the set $\textbf{C}$, as defined in Introduction, were 
functions on $\mathbb{R}$.
But, in Theorem 2.5, these functions are defined in the interval 
$(M_1,M_2]$. 
It should be noted that this difference does not affect the performance in 
terms of computing tasks, because we can consider the interval 
$(M_1,M_2]$ to be large 
enough so that it would be unreasonable for $\langle f,\varphi_\gamma\rangle$
such that $\gamma\in\Gamma$ to be placed outside 
the interval for every signal $f$. However, theoretically, we can provide a proof, by taking into account the set  
$\textbf{C}$ as
presented in Introduction. We have done this in the following theorem, 
the proof of which can be easily deduced from Theorem 2.5: 
\begin{theorem}\label{2.7}
%\begin{Theorem}\label{th2-7}
Suppose $f_1,f_2,g_1,g_2 \in \mathcal{H}$ and $M\in \mathbb{N}$ is  large  
enough such that
$( f_1)_{\gamma},( f_2 )_{\gamma},( g_1 )_{\gamma},( g_2 
)_{\gamma} \in (-M,M]$
for every $\gamma \in \Gamma$. Moreover, define the sets  $\textbf{C}^M$, $\textbf{C}$ and the functions  
$\mathfrak{I}^M,\mathfrak{I}$  respectively from  $\textbf{C}^M$, $\textbf{C}$  
to $\mathbb{R}$ by
\begin{align*}
\textbf{C}^M=\lbrace \psi:\mathcal{H} \rightarrow \mathbb{R}\big 
|~&\psi(f)=\sum_{\gamma \in \Gamma} \psi_{\gamma}(f_{\gamma});~ \forall \gamma 
\in \Gamma: ~\psi_{\gamma}:(-M,M]\rightarrow \mathbb{R}\\
~ &\text{is a regulated and quasiconvex function}\rbrace ,\\
\textbf{C}=\lbrace \psi:\mathcal{H} \rightarrow \mathbb{R}\big 
|~&\psi(f)=\sum_{\gamma \in \Gamma} \psi_{\gamma}(f_{\gamma});~ \forall \gamma 
\in \Gamma: ~\psi_{\gamma}:\mathbb{R}\rightarrow \mathbb{R}\\
~ &\text{is a regulated and quasiconvex function}\rbrace ,\\
\\
&\mathfrak{I}^M:\textbf{C}^M\longrightarrow \mathbb{R},\\
&\mathfrak{I}^M(\psi)=\sum_{i=1}^2 \Vert \tilde{f}_{i,\psi} - f_i 
\Vert_{\mathcal{H}}^2\\
&;\tilde{f}_{i,\psi} =\text{arg-min}_{f
	\in \mathcal{H}_{(-M,M]}} 
\Vert f- g_i \Vert ^2 +\psi(f),
\end{align*}
\begin{align*}
&\mathfrak{I}:\textbf{C}\longrightarrow \mathbb{R}\\
&\mathfrak{I}(\psi)=\sum_{i=1}^2 \Vert \tilde{f}_{i,\psi} - f_i \Vert^2\\
&;\tilde{f}_{i,\psi} =\text{arg-min}_{f
	\in \mathcal{H}} 
\Vert f- g_i \Vert ^2 +\psi(f).
%\end{split}
\end{align*}
Then there exist sequences $\lbrace\psi_n^M\rbrace_{n \in {\mathbb{N}}}$ in $\textbf{C}^M$ such that the functions $\lbrace\psi_\gamma\rbrace_{\gamma \in {\Gamma}}$ corresponding to each $\psi_n^M$ are step functions and
$$\mathfrak{I}^M(\psi_n^M)\longrightarrow \inf_{\psi \in 
	\textbf{C}}\mathfrak{I}(\psi)\quad \text{when} \quad n,M\rightarrow \infty.$$
\end{theorem}
%%%%%%
\begin{proof}
Suppose $S_n=\{1,\cdots ,(M_2-M_1)\times 2^n\}$ and  $\lbrace 
\epsilon_n \rbrace_{n=1}^{\infty}$ is a sequence such that $\lim _{n\rightarrow 
	\infty}\epsilon_n =0 $. Define the set $D_n^M$by
\begin{align*}
%\begin{split}
D_n^M= \Big\{ & (x_1,x_2, \cdots , x_{(2M) \times 2^n}) \in 
\mathbb{R}^{(2M) \times \mathbb{R}^n}\big |\\
& \forall (r,s,t) \in \lbrace (r,s,t)\big |~r,s,t \in S_n,~ r<s<t \rbrace ;~x_s\leq \max \{x_r,x_t \}\Big\}.
%\end{split}
\end{align*}
Also suppose $t_{1,\gamma}=[(( g_1 )_{\gamma} +M)\times 
2^n]+1$, $s_{1,\gamma}=[(( f_1 )_{\gamma} +M)\times 2^n]+1$, $t_{2,\gamma}=[(( 
g_2 )_{\gamma} +M)\times 2^n]+1$, $s_{2,\gamma}=[(( f_2 )_{\gamma} +M)\times 
2^n]+1$ and define the function  $K_n^{M,\gamma}$ from $D_n^M$ to $\mathbb{R}$ 
for every $\gamma \in \Gamma$ by
%\begin{sloppypar} 
\begin{align*}
%\begin{split}
K_n^{M,\gamma} \left(x_1,x_2, \cdots , x_{(2M) \times 2^n} \right) &=\Big\vert 
\min \lbrace (-M + \dfrac{t}{2^n}-( g_1 )_{\gamma})^2+x_t -(( f_1 )_{\gamma}-( 
g_1 )_{\gamma})^2 -x_{s_{1,\gamma}} \big | \\
&~1\leq t \leq t_{1,\gamma}-1 \rbrace \cup \lbrace x_{t_{1,\gamma}}-(( f_1 
)_{\gamma}-( g_1 )_{\gamma})^2 -x_{s_{1,\gamma}} \rbrace \\
&\cup \lbrace (-M + \dfrac{t-1}{2^n}+\epsilon_n -( g_1 )_{\gamma})^2 +x_t -(( 
f_1 )_{\gamma}-( g_1 )_{\gamma})^2 -x_{s_{1,\gamma}}\big |\\
&t_{1,\gamma}+1\leq t \leq (2M) \times 2^n \rbrace \Big\vert^2\\
&+ \Big\vert \min \lbrace (-M + \dfrac{t}{2^n}-( g_2 )_{\gamma})^2+x_t -(( f_2 
)_{\gamma}-( g_2 )_{\gamma})^2 -x_{s_{2,\gamma}} \big |\\
&~1\leq t \leq t_{2,\gamma}-1 \rbrace  \cup \lbrace x_{t_{2,\gamma}}-(( f_2 
)_{\gamma}-( g_2 )_{\gamma})^2 -x_{s_{2,\gamma}} \rbrace \\
& \cup \{ (-M + \dfrac{t-1}{2^n}+\epsilon_n -( g_2 )_{\gamma})^2 +x_t -(( f_2 
)_{\gamma}-( g_2 )_{\gamma})^2 -x_{s_{2,\gamma}}\big |\\
&t_{2,\gamma}+1\leq t \leq (2M) \times 2^n \} \Big\vert ^2.
%\end{split}
\end{align*}
%\end{sloppypar}
Let $\left(\mathcal{C}_1^{M,n,\gamma},\mathcal{C}_2^{M,n,\gamma}, \cdots , 
\mathcal{C}_{(2M) \times 2^n}^{M,n,\gamma} \right)$ be a minimizer of the 
function $K_n^{M,\gamma}$ for every $n\in \mathbb{N}$. Define the function 
$\psi_n^M$ for every $n\in \mathbb{N}$, from $\mathcal{H}_{(-M,M]}$ to 
$\mathbb{R}$ as follows:
\begin{align*}
%\begin{split}
&\psi_n^M:\mathcal{H}\rightarrow \mathbb{R}\\
&\psi_n^M(f)=\sum_{\gamma \in \Gamma} \psi_{n,\gamma}^M(f_{\gamma}),
%\end{split}
\end{align*}
where  $\psi_{n,\gamma}^M$ for every $n\in \mathbb{N}$ and $\gamma \in \Gamma$ 
is a step function from $(-M,M]$ to  $\mathbb{R}$ which is defined by 
\begin{equation*}
\psi_{n,\gamma}^M = \sum _{t=1}^{(2M) \times 2^n} \mathcal{C}_t^{M,n,\gamma} 
\chi_{{A}_t^M}
\end{equation*}
such that 
${A}_t ^M= \left( -M + \dfrac{t-1}{2^n} , -M + \dfrac{t}{2^n} 
\right];~1\leq t \leq (2M) \times 2^n.$
Now, suppose $M_0 \in \mathbb{N}$ is large enough such that $( f_1)_{\gamma},( f_2 
)_{\gamma},( g_1 )_{\gamma},( g_2 )_{\gamma} \in (-M_0,M_0]$. By using Theorem 
%\eqref{th2-4}
2.5
for every $M\geq M_0$ where  $M\in \mathbb{N}$, we will have:
$$\mathfrak{I}^M(\psi_n^M)\longrightarrow 
\text{inf}_{\psi\in {\textbf{C}}^M}\mathfrak{I}^M(\psi)\quad
 \text{when} \quad 
n\rightarrow\infty.$$
Since  $\mathbb{R}=\cup_{M=M_0}^{+\infty}(-M,M]$, we conclude
$$\mathfrak{I}^M(\psi_n^M)\longrightarrow 
\text{inf}_{\psi\in\textbf{C}}\mathfrak{I}(\psi)\quad \text{when} \quad 
n,M\rightarrow \infty.$$
\end{proof}
%%%%%
\begin{remark}\label{2.8}
As mentioned in Introduction, the constraint 
$\psi$ is desirable on 
the condition that the following minimization problem is soluble:
$$\inf_{f\in H}\|f-g\|_{\mathcal{H}}^2+\psi(f).$$
According to the definition of the set $\textbf{C}$,
this minimization problem uncouples 
into a family of one-dimensional minimizations:
\begin{align*}
&M_\gamma:\mathbb{R}\longrightarrow \mathbb{R}\\
&M_\gamma(x)=|x-g_\gamma|^2+\psi_\gamma(x);~~ \gamma\in \Gamma.
\end{align*}
Since $\psi_\gamma$
is regulated and quasiconvex for every $\gamma\in\Gamma$, $M_\gamma$
is also regulated and 
quasiconvex for every $\gamma\in \Gamma$ 
and the fact that the regulated and quasiconvex 
functions do not have a minimizer in general may seem to be a flaw. But, it can 
not cause a problem, due to the process of obtaining $\psi_\gamma$,
we work practically 
with an approximation of $\psi_\gamma$
($\psi_{n,\gamma}$ for a large enough $n$) that are quasiconvex step 
functions which they always have a minimizer. 
However, for the sake of theoretical solidity, a discussion similar to the 
definition of integral for measurable functions (using a sequence of step 
functions) can be done for the definition of a minimizer for the regulated and 
quasiconvex functions.
\end{remark}
%\end{Remark}
%%%%%%%%%%%%5
\begin{remark}\label{2.9}
In some applications, there may be other degradation (such as 
blurring) in addition to the noise and the minimization problem may change as 
following:
\begin{equation*}
\inf_{f\in \mathcal{H}}\|Kf-g\|_{\mathcal{H}}^2+\psi(f)
\tag*{$(2\cdot 8)$}
\end{equation*}
where $K: {\mathcal{H}}\longrightarrow {\mathcal{H}}'$ is
a linear bounded operator.  Since in this minimization problem the 
minimizer should be obtained by an iterative procedure, solving the 
corresponding bi-level optimization problem requires a different method from 
what was presented in this paper. In fact, the minimizer of the following 
function should be achieved:
\begin{align*}
& \Phi:\textbf{C}\longrightarrow \mathbb{R}\\
& \Phi(\psi)=\sum_{i=1}^n\|\tilde{f}_{i,\psi}-f_i\|_{\mathcal{H}}^2\\
&
;\tilde{f}_{i,\psi}=\text{arg-min}_{f\in \mathcal{H}}\|Kf-
g_i\|_{\mathcal{H}'}^2+\psi(f),
\tag*{$(2\cdot 9)$}
\end{align*} 
which is different from the problem presented in Introduction. However, in 
the special case where $K$ happens to be diagonal in the $\varphi_\gamma$-basis,
$K(\varphi_\gamma)=k_\gamma\varphi_\gamma$,   the 
minimization problem (2.8), thus, uncouples into a family of one-dimensional 
minimizations and is easily solved similar to what was done in Lemma 1.1:
$$\inf_{f\in\mathcal{H}}\sum_{\gamma\in \Gamma}[|k_\gamma f_\gamma-g_\gamma| 
^2+\psi_\gamma(f_\gamma)].$$
Therefore, for this case, since this minimization problem is completely similar to the minimization 
problem (1.6), the learning problem corresponding to the problem (2.8) 
will be similar to the learning method presented in this paper with a slight 
difference.

Solving the bilevel optimization problem (2.9) can have different types of applications from the application that already introduced. The type of application discussed in Introduction refers to the ones which are related to the modeling before 
the denoising process and to explain that various models can be obtained 
according to different conditions. However, in addition to the view in which 
the model has an ability to adapt, there is another perspective by which new 
constraints can be created for Inverse Problems. For instance, as we know the following famous minimization problem has 
different applications in applied sciences (see e.g. [17]):
$$\inf_{f\in \mathcal{H}}\|K(f)-g\|^2+\lambda|||f|||_{W,p}^p$$
where $\mathcal{H}$ is a Hilbert space, $K:{\mathcal{H}}\longrightarrow {\mathcal{H'}}$
is a linear and bounded function, $\lambda$ is some positive constant called 
the 
regularization parameter and
\begin{equation*}
|||f|||_{W,p}=\Biggl(\sum_{\gamma\in\Gamma} w_\gamma|\langle 
f,\varphi_\gamma\rangle|^p\Biggr)^{\frac{1}{p}}
\end{equation*}
for $1\leq p\leq 2$, 
is a weighted $\ell_p$-norm of the coefficients of $f$  with respect to an 
orthonormal basis $(\varphi_\gamma)_{\gamma\in\Gamma}$
of $\mathcal{H}$,
 and a sequence of strictly positive weights $W=(w_\gamma)_{\gamma\in 
\Gamma}$. Now,
assuming  $\psi_\circ(f)=\sum_{\gamma\in\Gamma}\psi_{\circ,\gamma}(f_\gamma)$ where
$\psi_{\circ,\gamma}(x)=\lambda w_\gamma |x|^p$  for every $\gamma\in\Gamma$, we have
$$\psi_\circ(f)=\sum_{\gamma\in\Gamma}\lambda w_\gamma| \langle 
f,\varphi_\gamma\rangle|^p=\lambda |||f|||^{p}_{W,p}.$$
Since $\psi_{\circ,\gamma}$
for every $\gamma\in\Gamma$ are Regulated and quasiconvex functions, the 
function  
$\psi_\circ$ 
is an element of the set $\textbf{C}$  defined in Introduction. Therefore, in an application that a model with $|||f|||^{p}_{W,p}$ penalty term work well and a large enough training set is available in that particular application, it is hoped that a regularization term $\psi$ in relation to that training set is found that functions better than $|||f|||_{W,p}$-norm, because here $\psi$
is taught based on a training set 
for a specific application type. In other words, by the learning process, a fixed 
constraint more suitable than $|||f|||_{W,p}$-norm can be produced. 
\end{remark}
\begin{remark}\label{2.10}
The problem (2.9) in the previous remark can be considered as a special case of the following problem:
\begin{align*}
& \Phi:\textbf{C}\times\textbf{U}\longrightarrow \mathbb{R}\\
& \Phi(\psi,K)=\sum_{i=1}^n\|\tilde{f}_{i,\psi,K}-f_i\|_{\mathcal{H}}^2\\
&
;\tilde{f}_{i,\psi,K}=\text{arg-min}_{f\in \mathcal{H}}\|Kf-
g_i\|_{\mathcal{H}'}^2+\psi(f)
\tag*{$(2\cdot 10)$}
\end{align*}  
where $\textbf{U}$ is a category of operators from the Hilbert space $\mathcal{H}$ to the Hilbert space $\mathcal{H^{\prime}}$. In fact, the problem (2.10) by assuming a single-member category becomes the problem (2.9).  The learning problems for applications in which both noise degradation and blurring degradations affect the inputs (objects) should be formulated as the bilevel optimization problem (2.10). To illustrate, consider the application in the satellite image processing which is presented in Section 7. Suppose the target of this application is limited to imaging the Earth's surface and the assumptions about the inputs and the outputs are changed as follows: 
objects (inputs) are images of some parts of the Earth's surface that we know their exact details, Labels (outputs) are images that have been taken of these objects by satellite's camera and then have been sent to the earth. By these assumptions, the outputs (labels) $g$'s not only have the noise degradation but also the blurring degradation which is due to the relative motion between the satellite and the earth. These assumptions are different from what is in Section 7 where outputs just have noise degradation. In this situation, how to obtain the right model with regard to the training set $ \lbrace (f_i, g_i)\vert ~i=1,\cdots, m \rbrace $ will be different under two different conditions: 

{\it Case 1)} if the blurring degradation pattern (the operator $K$) can be exactly formulated by some available features - namely the distance between the satellite to earth, the satellite speed, the shooting angle and so on - for attaining a right model in this case, the optimization problem (2.9) should be utilized. In other words, to obtain an image without degradation from a $g$ that is an image from another part of the earth's surface which has been taken by the satellite and then has been sent to the earth, the following minimization problem should be used: 
$$\text{inf}_{f 
 \in \mathcal{H}} 
\Vert K(f)- g \Vert ^2 +\psi^*(f).$$  
where $\psi^*$ is the minimizer of the learning problem (2.9) and $K$ is the same operator used in the bilevel optimization problem (2.9) which indicates the blurring degradation pattern. 

{\it Case 2)} if the blurring degradation pattern cannot be exactly formulated and we just know it is an element of a category of the operators, for attaining a right model in this case, the optimization problem (2.10) should be utilized. In other words, to obtain an image without degradation from $g$ that is an image from another part of the earth's surface which has been taken by the satellite and then has been sent to the earth, the following minimization problem should be used:
$$\text{inf}_{f 
 \in \mathcal{H}} 
\Vert K^*(f)- g \Vert ^2 +\psi^*(f).$$
where $(\psi^*, K^*)$ is the minimizer of the learning problem (2.10). 

Taken as a whole, in all applications in which, in addition to the noise degradation, the blurring degradation exists and these two degradations affect the input signals, for reaching a right model for signal denoising - given that whether the operator corresponding to the blurring degradation could be precisely specified or not - the bilevel optimization problems (2.9) or (2.10) should be considered. For a further discussion of the problems (2.9) and (2.10), see Section 6.
\end{remark}
\begin{remark}\label{2.11}
The approach for solving the problem (1.7) by considering $ \mathbf{Z}= \mathbf{C} $ was based on converting the unknown function $\psi$ to some unknown coefficients by utilizing step functions. 
The point arising here is that different approaches with different challenges would be possible by utilizing other types of functions different from step functions. 
For instance, ``sums of exponential functions" can be used by putting the set $ \mathbf{Z}= \mathbf{C}^{\prime}$ which is defined below in the problem (1.7):
\begin{align*}
\textbf{C}^{\prime}=\{\psi:\mathcal{H}\longrightarrow \mathbb{R}|~\psi(f)=\sum_{\gamma\in\Gamma}
\psi_\gamma(f_\gamma);~\forall 
\gamma\in\Gamma:~\psi_\gamma:\mathbb{R}\longrightarrow \mathbb{R},~\psi_\gamma \in {B} \}
\end{align*}
where $B=\overline {\cup_{n=1}^{+\infty} {B_n}}$, $B_n=\{\psi \vert ~\psi(x)=\sum_{i=1}^{n} {\alpha_i e^{\beta_i x}};~\alpha_i,\beta_i \in \mathbb{R},~\psi^{\prime\prime} >0\}$. 
The condition $ \psi^{\prime\prime} >0$ is added to ensure the existence of a minimizer for the problem (1.6) because in this case, $\psi$ would be convex. For solving (1.7) by considering $ \mathbf{Z}= \mathbf{C}^{\prime} $, the following bilevel optimization problems should be solved for every $n$: 
\begin{align*}
&\mathfrak{I}:{\textbf{C}_{n}^{\prime}}\longrightarrow \mathbb{R}\\
& \mathfrak{I}(\psi)=
\sum_{i=1}^m\|\tilde{f}_{i,\psi}-f_i\|^2\\
&
;\tilde{f}_{i,\psi}=\text{arg-min}_{f\in 
	\mathcal{H}}\|f-g_i\|^2+
\psi(f).
\tag*{$(2\cdot 11)$}
\end{align*}
where the $\mathbf{C}_{n}^{\prime}$ is as follows:
\begin{align*}
&\textbf{C}_{n}^{\prime}=\{\psi:\mathcal{H}\longrightarrow \mathbb{R}|~\psi(f)=\sum_{\gamma\in\Gamma}
\psi_\gamma(f_\gamma);~\forall 
\gamma\in\Gamma:~\psi_\gamma:\mathbb{R}\longrightarrow \mathbb{R},~\psi_\gamma \in {B_n} \}
\end{align*}
because, supposing $ \psi_{n}$ are the minimizers of (2.11) for every $n$, since $B=\overline {\cup_{n=1}^{+\infty}B_n}$, we will have:
$$\mathfrak{I}(\psi_n)\longrightarrow \inf_{\psi\in \textbf{C}^{\prime}} \mathfrak{I} (\psi)~~\text{when} 
~~n\longrightarrow 
\infty.$$
However, for solving the problems (2.11), we need a different approach from what presented here. In fact, the minimizer of the following differentiable function  from $ D\subseteq \mathbb{R}^{2n}$ to $\mathbb{R}$ should be found: 
\begin{align*}
&\mathfrak{I}:D\longrightarrow \mathbb{R}\\
& 
\mathfrak{I}(\alpha_1,\cdots, \alpha_n, \beta_1,\cdots, \beta_n)=\sum_{i=1}^m\sum_{\gamma\in\Gamma}|G_{(\alpha_1,\cdots,\alpha_n, \beta_1, \cdots, \beta_n)}^{-1}((g_i)_\gamma)-(f_i)_\gamma|^2
\end{align*}
where 
\begin{align*}
&D=\{(\alpha_1,\cdots,\alpha_n, \beta_1, \cdots, \beta_n) \in \mathbb{R}^{2n} \vert ~(\sum_{i=1}^{n} {\alpha_i e^{\beta_i x}})^{\prime \prime} > 0~for~x \in \mathbb{R}\},\\
&G_{(\alpha_1,\cdots,\alpha_n, \beta_1,\cdots, \beta_n)}(x)=x+\frac{\sum_{i=1}^n \alpha_i \beta_i e^{\beta_i x}}{2}.
\end{align*}
\end{remark}

\section{An additional  analytical method  for opting  the most appropriate basis in the learning process}
In this section, we will try to select a basis among a category of bases alongside with the regularization term regarding a training set. Firstly; it should be mentioned it is expected that using the matrix basis for the model proposed in this 
article will not bring the desired results. The reason is that by using the 
matrix basis, the minimization process will be performed on each entry of 
matrix separately. Thus, by applying it, the total variation denoising model 
[43] will act better than our learning method, because it considers 
the variations between entries in the minimization process.  

However, those bases from which desired outcomes are expected in 
computational works are multiscale representation systems such as Wavelets [16], 
Curvelets [11], Shearlets [28], and so on within the area of Applied Harmonic 
Analysis or the bases in Fourier Analysis. Now, it is a good notion to examine by ``trial and error" which one of these bases 
has more favorable results assuming to have 
a training set. Moreover, analytical efforts can also be made to get the best 
basis among a category of bases that will be addressed in this section:
For a theoretical setting, suppose $\mathcal{H}$ is a Hilbert space, $E\subseteq \mathbb{R}^N$ and the following 
set is a 
category of bases:
$$\amalg =\Big\lbrace \{ \varphi_{\gamma}^{(y_1,\cdots , y_N)}\}_{\gamma \in 
	\Gamma} \big |~(y_1,\cdots , y_N) \in E \Big\rbrace $$
where $N\in\mathbb{N}$ and $\{\varphi_{\gamma}^{(y_1,\cdots , 
y_N)}\}_{\gamma\in\Gamma}$ is a basis for the Hilbert space $\mathcal{H}$  for every 
$(y_1,\cdots , y_N)\in 
E$. Again, as it was introduced earlier, we consider a training set of pairs 
$(g_i,f_i)$; $i=1,\cdots , m.$ Also suppose $f_{i,\psi,(y_1,\cdots ,y_N)}$ is 
a minimizer of the following  minimization problem 
$$\begin{array}{c}
\text{inf}_{f\in{\mathcal{H}}_{(y_1,\cdots , y_N)}}\|f-g_i\|^2_{{\mathcal{H}}_{(y_1,\cdots ,y_N)}}+\psi(f)
\end{array}$$
where  $i\in \{1,2,\cdots , m\}$ and ${\mathcal{H}}_{(y_1,\cdots , y_N)}$ is 
the 
Hilbert space $\mathcal{H}$ by considering the basis 
$\{\varphi_{\gamma}^{(y_1,\cdots, y_N)}\}_{\gamma\in \Gamma}$ for it. Now, we peresent the following function:
\begin{align*}
&\mathfrak{I}:\textbf{C}\times E\longrightarrow \mathbb{R}\\
&\mathfrak{I}(\psi, (y_1,\cdots ,y_N)) 
=\sum_{i=1}^m\|f_{i,\psi,(y_1,\cdots,y_N)}-f_i\|^2_{{\mathcal{H}}_{(y_1,\cdots 
		,y_N)}}\\
&
;f_{i,\psi,(y_1,\cdots, y_N)}=\text{arg-min}_{f\in{\mathcal{H}}_{(y_1,\cdots ,y_N)}}
\|f-g_i\|^2_{{\mathcal{H}}_{(y_1,\cdots,y_N)}}+\psi(f).
\end{align*}

In Theorem 3.2 whose  proof is similar to Theorem 2.5, a sequence 
$\{\psi_n,(y_1^n,\cdots,y_N^n)\}$ will be presented from $\textbf{C}\times E$ such 
that 
$$\mathfrak{I}(\psi_n,(y_1^n,\cdots,y_N^n))\longrightarrow 
\text{inf}_{\psi\in \textbf{C},(y_1,\cdots,y_N)\in E}\mathfrak{I}(\psi,(y_1,\cdots,y_N)) 
~~\text{when} ~~n\longrightarrow\infty.$$
%%%%%%%%%%%%%%%%%%%%%}

The following proposition is needed for the proof of Theorem 3.2. In this proposition, the 
shorthand notation $f_{\gamma, (y_1,\cdots,y_N)}$
  is used for $\langle 
f,\psi_{\gamma}^{(y_1,\cdots,y_N)}\rangle_{\mathcal{H}_{(y_1,\cdots,y_N)}}$
 where  $\{\psi_\gamma^{(y_1,\cdots,y_N)}\}_{\gamma\in\Gamma}\in
 \amalg$:

%%%%%%%%%%%%%%%%%
\begin{proposition}\label{3.1}
%\begin{Proposition}\label{pro-3-1}
 Suppose $\mathcal{H}$ is a Hilbert space and  $\amalg$ is a 
category of bases 
as follows:
$$\amalg =\Big\lbrace \{ \varphi_{\gamma}^{(y_1,\cdots , y_N)}\}_{\gamma \in 
	\Gamma} \big |~(y_1,\cdots , y_N) \in E \Big\rbrace $$
where $E\subseteq \mathbb{R}^N$, $N \in \mathbb{N}$ and $\{ 
\varphi_{\gamma}^{(y_1,\cdots , y_N)}\}_{\gamma \in \Gamma}$ is an orthonormal 
basis for the Hilbert space $\mathcal{H}$ for every $(y_1,\cdots , 
y_N) \in E$. 
Now, Suppose $f_1,f_2,g_1,g_2 \in \mathcal{H}$ and $M_1, M_2 \in \mathbb{Z}$ are 
large enough such that $( f_1 )_{\gamma,(y_1,\cdots , y_N)},( f_2 )_{\gamma,(y_1,\cdots , y_N)},( g_1 )_{\gamma,(y_1,\cdots , y_N)},( 
g_2 )_{\gamma,(y_1,\cdots , y_N)} \in (M_1,M_2]$ for every $\gamma \in \Gamma$ and every $(y_1,\cdots , y_N) \in E$. 
Moreover, define the set  $\mathcal{A}$ and the functions
$F_{\psi}^{\gamma , 
	(y_1,\cdots , y_N)},~G_{\psi}^{\gamma , (y_1,\cdots , y_N)}$   from  $(M_1,M_2]$  
to $\mathbb{R}$ by:
\begin{align*}
%\begin{split}
&\mathcal{A}=\lbrace \psi|~\psi:(M_1,M_2]
\rightarrow \mathbb{R}; ~\psi~\text{is 
	a regulated and quasiconvex function}\rbrace ,\\
\\
&F_{\psi}^{\gamma , (y_1,\cdots , y_N)}(x)=\left( (g_1)_{\gamma , (y_1,\cdots 
	, y_N)}-x \right)^2 + \psi(x),\\
&G_{\psi}^{\gamma , (y_1,\cdots , y_N)}(x)=\left( (g_2)_{\gamma , (y_1,\cdots 
	, y_N)}-x \right)^2 + \psi(x)
%\end{split}
\end{align*}
where  $\psi \in \mathcal{A}$. Furthermore, we define the set $\mathcal{A}^m$ and the function $h$ as follows:
\begin{align*}
%\begin{split}
&\mathcal{A}^m=\underbrace{\mathcal{A} 
		\times \cdots \times \mathcal{A}}_{ \vert \Gamma \vert =m},\\
\\
&h:\mathcal{A}^m \times 
E\longrightarrow \mathbb{R}\\
&h\left((\psi_{\gamma_1},\cdots ,\psi_{\gamma_m} ),(y_1,\cdots , y_N) \right) \\
&=\sum_{\gamma \in \Gamma} \Big\vert F_{\psi_{\gamma}}^{\gamma , (y_1,\cdots , 
	y_N)} ((f_1)_{\gamma , (y_1,\cdots , y_N)}) 
-\min_{x \in (M_1,M_2]} F_{\psi_{\gamma}}^{\gamma , (y_1,\cdots , y_N)}(x) 
\Big\vert^2\\
&~~~~~~~~+ \Big\vert G_{\psi_{\gamma}}^{\gamma , (y_1,\cdots , y_N)} ((f_2)_{\gamma , 
	(y_1,\cdots , y_N)}) 
-\min_{x \in (M_1,M_2]} G_{\psi_{\gamma}}^{\gamma , (y_1,\cdots , y_N)}(x) 
\Big\vert^2.
%\end{split}
\end{align*}
Then there exists a sequence of step functions $\lbrace (\psi_{n,\gamma_1},\cdots ,\psi_{n,\gamma_m} ) \rbrace_{n \in \mathbb{N}}$ in $\mathcal{A}^m$ and $\lbrace (y_1^n,\cdots , y_N^n) \rbrace_{n \in \mathbb{N}}$ in $E$ such that 
\begin{align*}
%\begin{split}
&h\left((\psi_{n,\gamma_1},\cdots ,\psi_{n,\gamma_m} ),(y_1^n,\cdots , y_N^n) 
\right)\rightarrow\\
& \inf_{(\psi_{\gamma_1},\cdots ,\psi_{\gamma_m} ) \in \mathcal{A}^m , (y_1,\cdots , 
y_N) 
	\in E}~h\left( (\psi_{\gamma_1},\cdots ,\psi_{\gamma_m}),(y_1,\cdots , y_N) 
\right)\\
&\qquad  \qquad \quad \text{when} \quad n\rightarrow \infty.
%\end{split}
\end{align*}
\end{proposition}
%%%%%%%%%%%
\begin{proof}
Suppose $S_n=\{1,\cdots ,(M_2-M_1)\times 2^n\}$ and
$\lbrace \epsilon_n \rbrace_{n=1}^{\infty}$ is a sequence such that $\lim 
_{n\rightarrow \infty}\epsilon_n =0 $. Also we have:
\begin{align*}
%\label{al-2-1}
%\begin{split}
&\langle f_i,\varphi_{\gamma}^{(y_1^n,\cdots , y_N^n)}\rangle\longrightarrow 
\langle f_i,\varphi_{\gamma}^{(y_1^0,\cdots , y_N^0)}\rangle \quad \text{when} \quad 
(y_1^n,\cdots , y_N^n)\rightarrow ( y_1^0,\cdots , y_N^0) 
\tag*{$(3\cdot 1)$}\\
&\langle g_i,\varphi_{\gamma}^{(y_1^n,\cdots , y_N^n)}\rangle\longrightarrow 
\langle g_i,\varphi_{\gamma}^{(y_1^0,\cdots , y_N^0)}\rangle \quad \text{when} \quad 
(y_1^n,\cdots , y_N^n)\rightarrow ( y_1^0,\cdots , y_N^0)
\tag*{$(3\cdot 2)$}
%\end{split}
\end{align*}
for $i=1,2$ and every $\gamma \in \Gamma$.\\
Define the set  $D_n$ as defined in Theorem 2.5. Moreover, suppose $t_{1,\gamma , (y_1,\cdots , 
	y_N)}=[((g_1)_{\gamma
	, (y_1,\cdots , y_N)} -M_1) \times 2^n]+1$, $s_{1,\gamma , 
	(y_1,\cdots , y_N)}=[((f_1)_{\gamma , (y_1,\cdots , y_N)} -M_1) \times 
2^n]+1$, $t_{2,\gamma
 , (y_1,\cdots , y_N)}=[((g_2)_{\gamma , (y_1,\cdots , y_N)} 
-M_1) \times 2^n]+1$, $s_{2,\gamma , (y_1,\cdots , y_N)}=[((f_2)_{\gamma , 
	(y_1,\cdots , y_N)} -M_1) \times 2^n]+1$.\\
In addition, suppose  $E_n\subseteq E,~E=\cup_{n=1}^{+\infty}E_n$ and define the function 
$K_n^{\gamma}$ from $D_n \times E_n$ to $\mathbb{R}$ for $\gamma \in \Gamma$ by
%%%%%%%%
\begin{align*}
%\begin{split}
&K_n^{\gamma} \left(x_1^{\gamma},x_2^{\gamma}, \cdots , x_{(M_2-M_1) \times 
	2^n}^{\gamma} , y_1,\cdots , y_N \right) =\\
&\Big\vert \min \lbrace (M_1 + \dfrac{t}{2^n}-( g_1 )_{\gamma ,(y_1,\cdots , 
	y_N)})^2+x_t^{\gamma} -(( f_1 )_{\gamma , (y_1,\cdots , y_N)}-( g_1 )_{\gamma 
	,(y_1,\cdots , y_N)})^2 -x_{s_{1,\gamma ,(y_1,\cdots , y_N)}} \big | \\
&~1\leq t \leq t_{1,\gamma ,(y_1,\cdots , y_N)}-1 \rbrace \cup \lbrace 
x_{t_{1,\gamma ,(y_1,\cdots , y_N)}}-(( f_1 )_{\gamma ,(y_1,\cdots , y_N)}-( g_1 
)_{\gamma ,(y_1,\cdots , y_N)})^2 -x_{s_{1,\gamma ,(y_1,\cdots , y_N)}} \rbrace \\
&\cup \lbrace (M_1 + \dfrac{t-1}{2^n}+\epsilon_n -( g_1 )_{\gamma ,(y_1,\cdots 
	, y_N)})^2 +x_t^{\gamma} -(( f_1 )_{\gamma ,(y_1,\cdots , y_N)}-( g_1 
)_{\gamma 
	,(y_1,\cdots , y_N)})^2 -x_{s_{1,\gamma ,(y_1,\cdots , y_N)}}\big |\\
&t_{1,\gamma ,(y_1,\cdots , y_N)}+1\leq t \leq (M_2-M_1) \times 2^n \rbrace 
\Big\vert^2\\
&+ \Big\vert \min \lbrace (M_1 + \dfrac{t}{2^n}-( g_2 )_{\gamma ,(y_1,\cdots , 
	y_N)})^2+x_t^{\gamma} -(( f_2 )_{\gamma ,(y_1,\cdots , y_N)}-( g_2 )_{\gamma 
	,(y_1,\cdots , y_N)})^2 -x_{s_{2,\gamma ,(y_1,\cdots , y_N)}} \big |\\
&~1\leq t \leq t_{2,\gamma ,(y_1,\cdots , y_N)}-1 \rbrace  \cup \lbrace 
x_{t_{2,\gamma ,(y_1,\cdots , y_N)}}-(( f_2 )_{\gamma ,(y_1,\cdots , y_N)}-( g_2 
)_{\gamma ,(y_1,\cdots , y_N)})^2 -x_{s_{2,\gamma ,(y_1,\cdots , y_N)}} \rbrace \\
& \cup \{ (M_1 + \dfrac{t-1}{2^n}+\epsilon_n -( g_2 )_{\gamma ,(y_1,\cdots , 
	y_N)})^2 +x_t ^{\gamma}-(( f_2 )_{\gamma ,(y_1,\cdots , y_N)}-( g_2 )_{\gamma 
	,(y_1,\cdots , y_N)})^2 -x_{s_{2,\gamma ,(y_1,\cdots , y_N)}}\big |\\
&t_{2,\gamma ,(y_1,\cdots , y_N)}+1\leq t \leq (M_2-M_1) \times 2^n \} 
\Big\vert ^2.
%\end{split}
\end{align*}
Suppose  $\Gamma$ has finite elements $(\Gamma =\lbrace \gamma_1, \cdots , 
\gamma_m \rbrace)$ and define the function  $K_n$ on  $\underbrace{D_n \times 
	\cdots \times D_n}_{\vert \Gamma \vert =m} \times E_n$ as follows:
\begin{align*}
%\begin{split}
&K_n\left(x_1^{\gamma_1},x_2^{\gamma_1}, \cdots , x_{(M_2-M_1) \times 
	2^n}^{\gamma_1},\cdots , x_1^{\gamma_m},x_2^{\gamma_m}, \cdots , x_{(M_2-M_1) 
	\times 2^n}^{\gamma_m}, y_1,\cdots , y_N \right) =\\
&\sum_{\gamma \in \Gamma}K_n^{\gamma} \left(x_1^{\gamma},x_2^{\gamma}, \cdots , 
x_{(M_2-M_1) \times 2^n}^{\gamma} , y_1,\cdots , y_N \right).
%\end{split}
\end{align*}
Let 
\begin{align*}
& (\mathcal{C}_1^{n,\gamma_1},\mathcal{C}_2^{n,\gamma_1}, \cdots , 
\mathcal{C}_{(M_2-M_1) \times 2^n}^{n,\gamma_1} 
,\mathcal{C}_1^{n,\gamma_2},\mathcal{C}_2^{n,\gamma_2}, \cdots , \\
& \mathcal{C}_{(M_2-M_1) \times 2^n}^{n,\gamma_2} , \cdots , 
\mathcal{C}_1^{n,\gamma_m},\mathcal{C}_2^{n,\gamma_m}, \cdots , 
\mathcal{C}_{(M_2-M_1) \times 2^n}^{n,\gamma_m}, y_1^n,\cdots , y_N^n )
\end{align*}
be a minimizer of the function $K_n$ for every $n\in \mathbb{N}$. Define the 
step function $\psi_{n,\gamma}$ for every $n\in \mathbb{N}$ and $\gamma \in 
\Gamma$ from  $(M_1,M_2]$ to  $\mathbb{R}$ by
\begin{equation*}
\psi_{n,\gamma} = \sum _{t=1}^{(M_2-M_1) \times 2^n} \mathcal{C}_t^{n,\gamma} 
\chi_{{A}_t}
\end{equation*}
such that 
${A}_t= \left( M_1 + \dfrac{t-1}{2^n} , M_1 + \dfrac{t}{2^n} 
\right];~1\leq t \leq (M_2-M_1) \times 2^n.$
Now, we define the sets  $\mathcal{A}_n$ and $(\mathcal{A}_n)^m$   for  $n \in \mathbb{N}$:
\begin{align*}
%\begin{split}
&\mathcal{A}_n= \{\varphi \big |~ \varphi = \sum _{t=1}^{(M_2-M_1) \times 2^n} 
x_t \chi_{{A}_t};~ \forall (r,s,t) \in \lbrace (r,s,t)|~r,s,t\in S_n,~r<s<t \rbrace ;\\
&~~~~~~~~~~~~~~x_s\leq \max \{x_r,x_t \} \},\\
&(\mathcal{A}_n)^m=\underbrace{\mathcal{A}_n \times \cdots \times \mathcal{A}_n}_{\vert \Gamma 
		\vert =m}.
%\end{split}
\end{align*}
Given {\it Fact 1}, it will be enough to prove the following items:
%\begin{enumerate}[label=(\roman*)]
\begin{itemize}
	\item[\it (i)]
	By considering  {\it Supremum metric} for $\mathcal{A}$ and {\it Euclidean metric} 
	for  $E$, $\mathcal{A}^m \times E $ with the product metric is a metric space.
	\item[\it (ii)]
	$h:\mathcal{A}^m \times E\rightarrow  \mathbb{R}$ is a continuous function and 
bounded 
	from below.
	\item[\it (iii)]
	$(\mathcal{A}_n)^m \times E_ n \subseteq (\mathcal{A}_{n+1})^m \times E_ {n+1} $ for 
every  
	$n \in \mathbb{N}$.
	\item[\it (iv)]
	$\overline{\cup_{n=1}^{\infty} (\mathcal{A}_n)^m\times E_ n 
}=\mathcal{A}^m \times E.$
	\item[\it (v)]
	$\Big\vert h\left((\psi_{n,\gamma_1},\cdots ,\psi_{n,\gamma_m} 
),(y_1^n,\cdots 
	, y_N^n) \right)\\
	- \inf_{(\psi_{\gamma_1},\cdots ,\psi_{\gamma_m} ) \in 
		(\mathcal{A}_n)^m , (y_1,\cdots , y_N) 
		\in E_n}~h\left( (\psi_{\gamma_1},\cdots ,\psi_{\gamma_m}),(y_1,\cdots , y_N) 
	\right) \Big\vert \\
	\text{tends to} ~0 ~ \text{as}~ n\rightarrow \infty.$
\end{itemize}
%%%%%%%%%%%%%%%%%%%%%%%%%%
$(i)$ and  $(iii)$ are clearly correct and we have  $(iv)$ by Lemma  2.1. 
%\eqref{lem2-1}. 
Moreover, by 
%\eqref{al-2-1} 
(3.1) and (3.2) in assumptions, the function $h$ will 
be continuous. It remains to prove only $(v)$. By a process
similar to what was 
done in Proposition 2.3, 
%\eqref{pro2-2},
we  will have:
%%%%%%%%%%%%%
\begin{align*}
%\begin{split}
&h\Big|_{(\mathcal{A}_n)^m \times E_n}\left( (\psi_{\gamma_1},\cdots 
,\psi_{\gamma_m}),(y_1,\cdots , y_N) \right)\simeq \\
&K_n\left(x_1^{\gamma_1},x_2^{\gamma_1}, \cdots , x_{(M_2-M_1) \times 
	2^n}^{\gamma_1},\cdots , x_1^{\gamma_m},x_2^{\gamma_m}, \cdots , x_{(M_2-M_1) 
	\times 2^n}^{\gamma_m},y_1,\cdots , y_N \right),
%\end{split}
\end{align*}
where  $\psi_{\gamma_i}=\sum_{t=1}^{(M_2-M_1) \times 2^n}x_t^{\gamma_i} 
\chi_{{A}_t}$ for every $i \in \lbrace 1,\cdots ,m \rbrace$. 
Consequently 
%%%%%%%%%%%%%
\begin{align*}
%\begin{split}
&h\left( (\psi_{n,\gamma_1},\cdots ,\psi_{n,\gamma_m}),(y_1^n,\cdots , y_N^n) 
\right)\simeq \\
&\inf_{(\psi_{\gamma_1},\cdots ,\psi_{\gamma_m} ) \in (\mathcal{A}_n)^m , (y_1,\cdots , y_N) 
	\in E_n}~h\left( (\psi_{\gamma_1},\cdots ,\psi_{\gamma_m}),(y_1,\cdots , y_N) 
\right).
%\end{split}
\end{align*}
%%%%%%%%%%%%%%%
Therefore, since  $\epsilon_n$~tends to $0$~as $n$ tends to $\infty$, we 
conclude:
\begin{align*}
%\begin{split}
&\Big\vert 
h\left( (\psi_{n,\gamma_1},\cdots ,\psi_{n,\gamma_m}),(y_1^n,\cdots , y_N^n) 
\right) \\
&-\inf_{(\psi_{\gamma_1},\cdots ,\psi_{\gamma_m} ) \in 
	(\mathcal{A}_n)^m , (y_1,\cdots , y_N) 
	\in E_n}~h\left( (\psi_{\gamma_1},\cdots ,\psi_{\gamma_m}),(y_1,\cdots , y_N) 
\right)\Big\vert  \rightarrow 0,\\
& \qquad  \text{when} \qquad n\rightarrow \infty.
%\end{split}
\end{align*}
%}
\end{proof}

%%%%%

\begin{theorem}\label{3.2}
Suppose we have assumptions
in Proposition  3.1. 
%\eqref{pro-3-1}.
Moreover, define the 
function $\psi_n$ for every $n \in \mathbb{N}$ from  $\mathcal{H}$ to 
$\mathbb{R}$ as follows:
%%%%%%%%
\begin{align*}
%\begin{split}
&\psi_n: \mathcal{H}\longrightarrow \mathbb{R}\\
&\psi_n(f)=\sum_{\gamma \in \Gamma} \psi_{n,\gamma}f_{\gamma}.
%\end{split}
\end{align*}
Also, define the set $\textbf{C}$ and the function $\mathfrak{I}$ from 
$\textbf{C} \times E$ to $\mathbb{R}$ by:
\\
%%%%%%%%%%%%
\begin{align*}
%\begin{split}
\textbf{C}=\lbrace \psi:\mathcal{H} \rightarrow \mathbb{R}\big 
|~&\psi(f)=\sum_{\gamma \in \Gamma} \psi_{\gamma}(f_{\gamma});~ \forall \gamma 
\in \Gamma : ~\psi_{\gamma}:(M_1,M_2]\rightarrow \mathbb{R}\\
~ &\text{is a regulated and quasiconvex function}\rbrace ,\\
\\
&\mathfrak{I}:\textbf{C} \times E\longrightarrow \mathbb{R} \\
&\mathfrak{I} \left( \psi ,(y_1,\cdots , y_N) \right) = \sum_{i=1}^2 \Big\Vert 
\tilde{f}_{i,\psi , (y_1,\cdots , y_N)} -f_i 
\Big\Vert_{\mathcal{H}_{(y_1,\cdots , y_N)}}^2\\
& ;\tilde{f}_{i,\psi , (y_1,\cdots , y_N)} =\text{arg-min}_{f \in 
	\mathcal{H}_{(M_1,M_2]}} \Big\Vert f- g_i 
\Big\Vert_{\mathcal{H}_{(y_1,\cdots 
		, y_N)}}^2 + \psi(f).
%\end{split}
\end{align*}
Then we have 
\begin{equation*}
\mathfrak{I}\left( \psi_n ,(y_1^n,\cdots , y_N^n) \right)\longrightarrow 
\inf_{\psi \in \textbf{C},(y_1,\cdots , y_N) \in E}
\mathfrak{I} \left( \psi ,(y_1,\cdots , y_N) \right)~~
\text{as} \qquad n\rightarrow \infty.
\end{equation*}
\end{theorem}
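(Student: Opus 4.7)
The strategy is to mirror the proof of Theorem 2.5, replacing Proposition 2.3 with Proposition 3.1 at the crucial step and carrying the basis parameter $(y_1^n,\ldots,y_N^n)$ alongside the step-function approximations. Beginning from the conclusion of Proposition 3.1, we have
\[
h\bigl((\psi_{n,\gamma_1},\ldots,\psi_{n,\gamma_m}),(y_1^n,\ldots,y_N^n)\bigr)\longrightarrow \inf_{\mathcal{A}^m\times E} h
\]
as $n\to\infty$, where $h$ is the sum over $\gamma\in\Gamma$ of the per-basis value-gap expressions. Because $h$ decouples as a finite sum of non-negative terms whose free variables $\psi_{\gamma_1},\ldots,\psi_{\gamma_m}$ are independent for fixed basis $(y_1,\ldots,y_N)$, this summed convergence transfers, along the chosen sequence, to corresponding convergence of each per-$\gamma$ value gap.

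Next, with Fact 2 ensuring that every $\psi_{n,\gamma}$ is quasiconvex and Remark 2.4 weakening the hypothesis of Lemma 2.2 to $\inf h_1\geq 0$, I apply Lemma 2.2 for each fixed $\gamma$ with $b_i = (g_i)_{\gamma,(y_1^n,\ldots,y_N^n)}$ and $a_i = (f_i)_{\gamma,(y_1^n,\ldots,y_N^n)}$ for $i=1,2$. This converts value-gap convergence into convergence of the argmin-gap
\[
\sum_{i=1}^2 \bigl|(f_i)_{\gamma,(y_1^n,\ldots,y_N^n)} - \mathop{\mathrm{arg\,min}}_{x\in(M_1,M_2]}[(x-(g_i)_{\gamma,(y_1^n,\ldots,y_N^n)})^2 + \psi_{n,\gamma}(x)]\bigr|^2
\]
to the corresponding infimum. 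Summing across $\gamma\in\Gamma$, Fact 3 (Parseval) packages each coefficient-wise sum of squares into a squared $\mathcal{H}$-norm in the basis $\{\varphi_\gamma^{(y_1^n,\ldots,y_N^n)}\}$, while Fact 4 commutes the per-coordinate argmin with the sum, producing a single argmin over $\mathcal{H}_{(M_1,M_2]}$ of $\|f-g_i\|^2_{\mathcal{H}_{(y_1^n,\ldots,y_N^n)}} + \psi_n(f)$, which is exactly $\tilde f_{i,\psi_n,(y_1^n,\ldots,y_N^n)}$.

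Identifying the resulting left-hand side as $\mathfrak{I}(\psi_n,(y_1^n,\ldots,y_N^n))$ and the limit as $\inf_{\mathbf{C}\times E}\mathfrak{I}$ completes the argument. The principal obstacle is propagating the varying basis parameter cleanly through Lemma 2.2, because the quadratic centers $b_i$ and targets $a_i$ drift with $n$ via the basis choice; the continuity hypotheses $(3\cdot 1)$--$(3\cdot 2)$ together with the finiteness of $\Gamma$ assumed in Proposition 3.1 keep this drift under control, and the independence of the $\psi_{\gamma_j}$'s in the sum defining $h$ is what legitimizes extracting per-$\gamma$ convergence from the aggregate statement of Proposition 3.1.
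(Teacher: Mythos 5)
Your proposal follows essentially the same route as the paper's proof: Proposition 3.1 supplies the value-gap convergence, Fact 2 together with Lemma 2.2 and Remark 2.4 converts it into argmin-gap convergence, and Facts 3 and 4 repackage the coefficient-wise statements into the Hilbert-space norms and the single argmin over $\mathcal{H}_{(M_1,M_2]}$, after which the definition of $\textbf{C}$ gives the claim. The only cosmetic difference is that the paper keeps everything at the level of the sum over $\gamma$ rather than first extracting per-$\gamma$ convergence, and it leaves implicit the same caveat you flag about the centers $(g_i)_{\gamma,(y_1^n,\ldots,y_N^n)}$ drifting with $n$, which is controlled by $(3\cdot 1)$--$(3\cdot 2)$ and the finiteness of $\Gamma$.
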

%%%%

\begin{proof}
By Proposition 3.1
%\eqref{pro-3-1}
we have :
\begin{align*}
%\begin{split}
&\sum_{\gamma \in \Gamma} \Big\vert F_{\psi_{n,\gamma}}^{\gamma , 
	(y_1^n,\cdots , y_N^n)} ((f_1)_{\gamma , (y_1^n,\cdots , y_N^n)}) 
-\min_{x \in (M_1,M_2]} F_{\psi_{n,\gamma}}^{\gamma , (y_1^n,\cdots , 
	y_N^n)}(x) \Big\vert^2\\
&+ \Big\vert G_{\psi_{n,\gamma}}^{\gamma , (y_1^n,\cdots , y_N^n)} 
((f_2)_{\gamma , (y_1^n,\cdots , y_N^n)}) 
-\min_{x \in (M_1,M_2]} G_{\psi_{n,\gamma}}^{\gamma , (y_1^n,\cdots , 
	y_N^n)}(x) \Big\vert^2 \\
&\longrightarrow \inf_{(\psi_{\gamma_1},\cdots ,\psi_{\gamma_m} ) \in 
	\mathcal{A}^m , 
	(y_1,\cdots , y_N) \in E} \sum_{\gamma \in \Gamma}
\Big\vert F_{\psi_{\gamma}}^{\gamma , (y_1,\cdots , y_N)} ((f_1)_{\gamma , 
	(y_1,\cdots , y_N)}) 
-\min_{x \in (M_1,M_2]} F_{\psi_{\gamma}}^{\gamma , (y_1,\cdots , y_N)}(x) 
\Big\vert^2\\
&+\Big\vert G_{\psi_{\gamma}}^{\gamma , (y_1,\cdots , y_N)} ((f_2)_{\gamma , 
	(y_1,\cdots , y_N)}) 
-\min_{x \in (M_1,M_2]} G_{\psi_{\gamma}}^{\gamma , (y_1,\cdots , y_N)}(x) 
\Big\vert^2 , \\
& \qquad \qquad  \qquad \qquad \qquad \text{as} \qquad n\rightarrow \infty.
%\end{split}
\end{align*}
Since by {\it Fact 2}  $~\psi_{n ,\gamma}$ is quasiconvex for every $\gamma \in 
\Gamma$, by Lemma 2.2 and  Remark 2.4 we can deduce that 
\begin{align*}
&\sum_{\gamma \in \Gamma}\Big\vert (f_1)_{\gamma , (y_1^n,\cdots , y_N^n)} 
-\text{arg-min}_{x \in (M_1,M_2]}\left[((g_1)_{\gamma , (y_1^n,\cdots , 
	y_N^n)}-x)^2+\psi_{n ,\gamma}(x) \right] \Big\vert^2 \\
&~~~~~+\Big\vert (f_2)_{\gamma , (y_1^n,\cdots , y_N^n)} -\text{arg-min}_{x \in 
	(M_1,M_2]}\left[((g_2)_{\gamma , (y_1^n,\cdots , y_N^n)}-x)^2+\psi_{n 
	,\gamma}(x) \right] \Big\vert^2 \\
&\longrightarrow\\
& \inf_{(\psi_{\gamma_1},\cdots ,\psi_{\gamma_m} ) \in \mathcal{A}^m
	,(y_1,\cdots ,y_N) \in E}\\
&\sum_{\gamma \in \Gamma}\Big\vert (f_1)_{\gamma ,(y_1,\cdots ,y_N)} 
-\text{arg-min}_{x \in (M_1,M_2]}\left[((g_1)_{\gamma , (y_1,\cdots 
	,y_n)}-x)^2+\psi_{\gamma}(x) \right] \Big\vert^2 \\
&~~~~~+\Big\vert (f_2)_{\gamma , (y_1,\cdots ,y_N)} -\text{arg-min}_{x \in 
	(M_1,M_2]}\left[((g_2)_{\gamma , (y_1,\cdots ,y_N)}-x)^2+\psi_{\gamma}(x) 
\right] \Big\vert^2, \\
& \qquad \qquad \qquad  \qquad \qquad \qquad  \text{as} \quad n\rightarrow 
\infty.
%\end{split}
\end{align*}\\
\\
\\
Then by {\it Fact 3}, we will have
\begin{align*}
%\begin{split}
&\Big\Vert \sum_{\gamma \in \Gamma}\left[ (f_1)_{\gamma , (y_1^n,\cdots , 
	y_N^n)} -\text{arg-min}_{x \in (M_1,M_2]}((g_1)_{\gamma , (y_1^n,\cdots , 
	y_N^n)}-x)^2+\psi_{n ,\gamma}(x) \right]\varphi_{\gamma}^{(y_1^n,\cdots , 
	y_N^n)} \Big\Vert_{\mathcal{H}_{(y_1^n,\cdots , y_N^n)}}^2 \\
&+\Big\Vert \sum_{\gamma \in \Gamma}\left[ (f_2)_{\gamma , (y_1^n,\cdots , 
	y_N^n)} -\text{arg-min}_{x \in (M_1,M_2]}((g_2)_{\gamma , (y_1^n,\cdots , 
	y_N^n)}-x)^2+\psi_{n ,\gamma}(x) \right] \varphi_{\gamma}^{(y_1^n,\cdots , 
	y_N^n)} \Big\Vert_{\mathcal{H}_{(y_1^n,\cdots , y_N^n)}}^2 \\ 
&\longrightarrow \\
&\inf_{(\psi_{\gamma_1},\cdots ,\psi_{\gamma_m} ) \in \mathcal{A}^m
	,(y_1,\cdots ,y_N) \in E}\\
&\Big\Vert \sum_{\gamma \in \Gamma}\left[ (f_1)_{\gamma ,(y_1,\cdots ,y_N)} 
-\text{arg-min}_{x \in (M_1,M_2]}((g_1)_{\gamma , (y_1,\cdots 
	,y_N)}-x)^2+\psi_{\gamma}(x)
\right] \varphi_{\gamma}^{(y_1,\cdots , y_N)} 
\Big\Vert_{\mathcal{H}_{(y_1,\cdots , y_N)}}^2 \\
&+\Big\Vert \sum_{\gamma \in \Gamma} \left[ (f_2)_{\gamma , (y_1,\cdots 
,y_N)} 
-\text{arg-min}_{x \in (M_1,M_2]}((g_2)_{\gamma , (y_1,\cdots 
	,y_N)}-x)^2+\psi_{\gamma}(x)
\right] \varphi_{\gamma}^{(y_1,\cdots , y_N)} 
\Big\Vert_{\mathcal{H}_{(y_1,\cdots , y_N)}}^2 \\
& \qquad \qquad \qquad  \qquad \qquad \qquad  \text{as} \quad n\rightarrow 
\infty.
%\end{split}}
\end{align*}\\
Therefore, by {\it Fact 4} we have 
\begin{align*}
%\begin{split}
&\Big\Vert f_1 -\text{arg-min}_{f
	\in \mathcal{H}_{(M_1,M_2]}}\sum_{\gamma \in 
	\Gamma}(f_{\gamma  , (y_1^n,\cdots , y_N^n)}-(g_1)_{\gamma  , (y_1^n,\cdots , 
	y_N^n)})^2+\sum_{\gamma \in \Gamma}\psi_{n ,\gamma}(f_{\gamma})  
\Big\Vert_{\mathcal{H}_{(y_1^n,\cdots , y_N^n)}}^2 \\
&+\Big\Vert f_2 -\text{arg-min}_{f \in \mathcal{H}_{(M_1,M_2]}} 
\sum_{r\in\Gamma}(f_{\gamma , 
	(y_1^n,\cdots , y_N^n)}-(g_2)_{\gamma , (y_1^n,\cdots , 
y_N^n)})^2+\sum_{\gamma 
	\in \Gamma} \psi_{n ,\gamma}(f_{\gamma})   
\Big\Vert_{\mathcal{H}_{(y_1^n,\cdots , y_N^n)}}^2 \\
&\longrightarrow \\
&\inf_{(\psi_{\gamma_1},\cdots ,\psi_{\gamma_m} ) \in \mathcal{A}^m
	,(y_1,\cdots ,y_N) \in E}\\
&\Big\Vert f_1 -\text{arg-min}_{f
	\in \mathcal{H}_{(M_1,M_2]}}\sum_{\gamma \in 
	\Gamma}(f_{\gamma ,(y_1,\cdots , y_N)}-(g_1)_{\gamma , (y_1,\cdots , 
	y_N)})^2+\sum_{\gamma \in \Gamma}\psi_{\gamma}(f_{\gamma}) 
\Big\Vert_{\mathcal{H}_{(y_1,\cdots , y_N)}}^2 \\
&+ \Big\Vert f_2 -\text{arg-min}_{f \in 
\mathcal{H}_{(M_1,M_2]}}\sum_{\gamma\in\Gamma}(f_{\gamma
	,(y_1,\cdots , y_N)}-(g_2)_{\gamma ,(y_1,\cdots , y_N)})^2+\sum_{\gamma \in 
	\Gamma}\psi_{\gamma}(f_{\gamma})  \Big\Vert_{\mathcal{H}_{(y_1,\cdots , 
		y_N)}}^2 \\
& \qquad \qquad \qquad  \qquad \qquad \qquad  \text{as} \quad n\rightarrow 
\infty.
%\end{split}
\end{align*}\\
Finally, since 
%\begin{align*}
%\begin{split}
$\textbf{C}=\Big\lbrace \psi \big|~\psi(f)=\sum_{\gamma \in \Gamma} 
\psi_{\gamma}(f_{\gamma});~\psi_{\gamma} \in \mathcal{A}~ \text{for every} 
~\gamma \in \lbrace \gamma_1, \cdots , \gamma_m\rbrace \Big\rbrace$,
%\end{split}
%\end{align}
we conclude:\\
\begin{align*}
%\begin{split}
&\Big\Vert f_1 -\text{arg-min}_{f \in \mathcal{H}_{(M_1,M_2]}} \Vert f-g_1 
\Vert_{\mathcal{H}_{(y_1^n,\cdots , y_N^n)}}^2 +\psi_n(f) 
\Big\Vert_{\mathcal{H}_{(y_1^n,\cdots , y_N^n)}}^2\\
&+\Big\Vert f_2 -\text{arg-min}_{f \in \mathcal{H}_{(M_1,M_2]}} \Vert f-g_2 
\Vert_{\mathcal{H}_{(y_1^n,\cdots , y_N^n)}}^2 +\psi_n(f) 
\Big\Vert_{\mathcal{H}_{(y_1^n,\cdots , y_N^n)}}^2\\
&\longrightarrow 
\inf_{\psi \in \textbf{C},(y_1,\cdots , y_N) \in E} \Big\Vert f_1 
-\text{arg-min}_{f \in \mathcal{H}_{(M_1,M_2]}} \Vert f-g_1 
\Vert_{\mathcal{H}_{(y_1,\cdots , y_N)}}^2
+\psi(f) \Big\Vert_{\mathcal{H}_{(y_1,\cdots , y_N)}}^2 \\ 
&+\Big\Vert f_2 -\text{arg-min}_{f \in \mathcal{H}_{(M_1,M_2]}} \Vert f-g_2 
\Vert_{\mathcal{H}_{(y_1,\cdots , y_N)}}^2 +\psi(f) 
\Big\Vert_{\mathcal{H}_{(y_1,\cdots , y_N)}}^2 \\
& \qquad \qquad \qquad  \qquad \qquad \qquad  \text{as} \quad n\rightarrow 
\infty .
%\end{split}
\end{align*}
\end{proof}

%%%%%%%%%%%%%%%%%%%%%%%%%%%%%%5

It should be mentioned that the constrained optimization problems that arise to achieve the step functions in Theorem 3.2 (or Proposition 3.1) do not have big differences with the corresponding constrained optimization problems with the step functions in Theorem 2.5, a sample of which was investigated in  Example 2.6. The only difference is that the variables for selecting the basis type should also be added to the variables of the optimization problems. The following example is provided to clarify this:

%%%%%%%%%%%%%%

\begin{example}\label{3.3}
Suppose that in the assumptions of Theorem 3.2 (or Proposition 3.1), the set $E_1$ is as follows:
\begin{align*}
E_1=\{(y_1,\cdots , y_N)\in \mathbb{R}^N|&~g_i(y_1,\cdots , y_N)=c_i;~1\leq i\leq n,\\ 
&~h_j(y_1,\cdots , y_N)\geq d_j;~1\leq j\leq m\}.
\end{align*}
Also, suppose
$$\Gamma=\{\gamma_1,\gamma_2\},~ M_1=1,~M_2=3,~ \epsilon_n=\frac{1}{2^n}$$
 and 
$(f_1)_{\gamma_1,(y_1,\cdots , y_N)},(f_2)_{\gamma_1,(y_1,\cdots , y_N)},
(g_1)_{\gamma_1,(y_1,\cdots , y_N)}, (g_2)_{\gamma_1,(y_1,\cdots , y_N)}, (f_1)_{\gamma_2,(y_1,\cdots , y_N)},\\ 
(f_2)_{\gamma_2,(y_1,\cdots , y_N)}, (g_1)_{\gamma_2,(y_1,\cdots , y_N)}, (g_2)_{\gamma_2,(y_1,\cdots , y_N)}$ belonge to $(M_1,M_2]$ for every $(y_1,\cdots , y_N) \in E$. Then we will have:
$$\psi_{1,\gamma_1}=\sum_{t=1}^4\mathcal{C}_t^{1,\gamma_1}\chi_{A_t}, ~~
\psi_{1,\gamma_2}=\sum_{t=1}^4\mathcal{C}_t^{1,\gamma_2}\chi_{A_t}$$
where 
$$A_t=(1+\frac{t-1}{2}, 1+\frac{t}{2}]; ~1\leq t\leq 4$$
and $(\mathcal{C}_1^{1,\gamma_1},\mathcal{C}_2^{1,\gamma_1},\mathcal{C}_3^{1,\gamma_
1},{\mathcal{C}}_4^{1,\gamma_1}, \mathcal{C}_1^{1,\gamma_2}, 
\mathcal{C}_2^{1,\gamma_2},\mathcal{C}_3^{1,\gamma_2},\mathcal{C}_4^{1,\gamma_2}
, y_1^1,\cdots , y_N^1)$ 
is the solution of the following constrained optimization problem:
%%%%%%%%%%%%%%%%%%55
\begin{align*}
\min K_1& (x_1^{\gamma_1},x_2^{\gamma_1},x_3^{\gamma_1}, x_4^{\gamma_1}, 
x_1^{\gamma_2}, x_2^{\gamma_2}, x_3^{\gamma_2}, x_4^{\gamma_2}, y_1,\cdots , 
y_N)\\
& |x_1^{\gamma_1}-x_3^{\gamma_1}|+x_1^{\gamma_1}+x_3^{\gamma_1}-2x_2^{\gamma_1}
\geq 0\\
& |x_1^{\gamma_1}-x_4^{\gamma_1}|+x_1^{\gamma_1}+x_4^{\gamma_1}
-2x_2^{\gamma_1}\geq 0,\\
& |x_1^{\gamma_1}-x_4^{\gamma_1}|+x_1^{\gamma_1}+x_4^{\gamma_1}
-2x_3^{\gamma_1}\geq 0,\\
& |x_2^{\gamma_1}-x_4^{\gamma_1}|+x_2^{\gamma_1}+x_4^{\gamma_1}
-2x_3^{\gamma_1}\geq 0,\\
& |x_1^{\gamma_2}-x_3^{\gamma_2}|+x_1^{\gamma_2}+x_3^{\gamma_2}
-2x_2^{\gamma_2}\geq 0,\\
& |x_1^{\gamma_2}-x_4^{\gamma_2}|+x_1^{\gamma_2}+x_4^{\gamma_2}
-2x_2^{\gamma_2}\geq 0,\\
& |x_1^{\gamma_2}-x_4^{\gamma_2}|+x_1^{\gamma_2}+x_4^{\gamma_2}
-2x_3^{\gamma_2}\geq 0,\\
& |x_2^{\gamma_2}-x_4^{\gamma_2}|+x_2^{\gamma_2}+x_4^{\gamma_2}
-2x_3^{\gamma_2}\geq 0,\\
& g_i(y_1,\cdots , y_N)=c_i;~~ 1\leq i\leq n,\\
& h_j(y_1,\cdots , y_N)\geq d_j;~~ 1\leq j\leq m.
\end{align*}
where $K_1$ is as defined in Proposition 3.1.\\
\end{example}
%\end{Example}
%%%%%%%%%%%%%%%%%
\begin{remark}\label{3.4}
 It should be noted that Theorem 2.5
can be deduced from Theorem 3.2. 
In fact, by assigning a single-member category of bases to  
assumptions of Theorem 3.2, it will be converted to Theorem 2.5.\\
\end{remark}
\begin{remark}\label{3.5}
 It is clear that if the category of bases $\amalg $ is larger, the value of the $\text{inf}_{\psi\in \textbf{C},(y_1,\cdots,y_N)\in E}\mathfrak{I}(\psi,(y_1,\cdots,y_N))$ will be less. In other words, if $E_1 \subseteq E_2$ then $$\text{inf}_{\psi\in \textbf{C},(y_1,\cdots,y_N)\in E_2}\mathfrak{I}(\psi,(y_1,\cdots,y_N))\leq\text{inf}_{\psi\in \textbf{C},(y_1,\cdots,y_N)\in E_1}\mathfrak{I}(\psi,(y_1,\cdots,y_N)).$$ 
Hence, those papers in which an extension for a well-known basis is provided - such as the extensions of the {\it Chebyshev polynomials} [31, 47] - can be significant in leading to a better performance by the method proposed in this section as well as a framework including two or more kinds of bases - such as [27] that presents a framework which encompasses most multiscale systems from Applied Harmonic Analysis.  
 
Moreover, the following well-known theorem in the multi-resolution analysis literature (see Theorem 5.23 in [8], for example) in which a category of multi-resolution analyses is presented could also be useful:
\paragraph{Theorem ($\ast$)} 
Suppose $P(z)=(1/2){ \sum_k {p_k z^k}}$ is a polynomial that satisfies
the following conditions:
\begin{itemize}
\item[{\it (i)}] $P(1)=1$.

\item[{\it (ii)}] $\vert P(z) \vert^2 + \vert P(-z) \vert^2 = 1~for~\vert z \vert =1$.

\item[{\it (iii)}] $\vert P(e^{it}) \vert > 0 ~for~\vert t \vert \leq \pi/2$.

\end{itemize}
Let $\phi_0(x)$ be the Haar scaling function and let $\phi_n(x)=\sum_{k} p_k \phi_{n-1}(2x-k)$
for $n\geq1$. Then, the sequence $\phi_n$ converges pointwise and in $L^2$ to a function $\phi$,
which satisfies the normalization condition, $\hat{\phi}(0)=1/ \sqrt{2\pi} $, the orthonormality
condition 
$$\int_{-\infty}^{+\infty} \phi(x-n) \phi(x-m) dx = \delta_{nm}$$
and the scaling equation, $\phi(x)=\sum_{k} p_k \phi(2x-k)$.

This theorem will be applied in Section 5.2 for writing an algorithm corresponding to the learning method provided in this section. 
\end{remark}

%%%%%%%%%%%%%%%%%%%%
%%%%%%%%%%%%%%%%%%%%
%%%%%%%%%%%%%%%%%%%%jadid
\section{An alternative approach}
In this section, we present another method for attaining the constraint $\psi$ that differs from the approach described in Section 2. As mentioned in Remark 2.4, some further theoretical efforts were needed for the approach presented in Section 2, although they will probably not cause any challenges in the practical results. However, in this new approach that more computational tasks will probably be needed, no theoretical scratches exist. This new method will be addressed in Proposition 4.1 and  Theorem 4.2. Finally, the end of the section has been allocated to the discussion of Signal classification and Supervised Learning. 
\begin{proposition}\label{4.1}
Suppose $a_1,a_2,b_1,b_2 \in \mathbb{R}$ and $M_1,M_2 \in \mathbb{Z}$ are large 
enough such that $a_1,a_2,b_1,b_2 \in (M_1,M_2]$. 
Moreover, define the set  $\mathcal{A}$ and the function  $h$  from  
$\mathcal{A}$ to $\mathbb{R}$ by
%%%%%
\begin{align*}
\mathcal{A}=\{\psi \big|~\psi:(M_1,M_2]\rightarrow \mathbb{R};
~\psi ~ \text{is a regulated and quasiconvex function}\}, 
\end{align*}
%%%%%%%%%%%%%%%%%%%55
\begin{align*}
&h:\mathcal{A}\rightarrow  \mathbb{R}\\
&h(\psi ) =\Big\vert a_1-\text{arg-min}_{x \in (M_1,M_2]}
 F_{\psi}(x)|^2+|a_2-\text{arg-min}_{x \in (M_1,M_2]}
G_{\psi}(x) \Big\vert^2 
\end{align*}
where $F_{\psi}(x)  = (x - b_1)^2 + \psi(x) $ and $G_{\psi}(x) =(x-b_2)^2 
+\psi(x) $.
Then there exists a sequence of step functions $\lbrace \psi_n \rbrace_{n \in \mathbb{N}}$ in $\mathcal{A}$ such that
\begin{equation*}
h(\psi_n)\longrightarrow \inf_{\psi\in\mathcal{A}}
 h(\psi) \qquad \text{as} \quad n\rightarrow 
\infty.
\end{equation*}

\end{proposition}
%%%%
\begin{proof}
Suppose $S_n=\lbrace 1,\cdots, (M_2-M_1)\times 
2^n\rbrace$ and $\lbrace 
\epsilon_n \rbrace_{n=1}^{\infty}$ is  a sequence such that 
$\lim _{n\rightarrow \infty} \epsilon _n =0$. Define the set $D_n$ by 
\begin{align*}
%\begin{split}
D_n=\Big\{&(x_1,x_2, \cdots , x_{(M_2-M_1) \times 2^n}) \in 
\mathbb{R}^{(M_2-M_1) \times 2^n}\big |\\
& \forall (r,s,t) \in \lbrace (r,s,t)\big| ~r,s,t \in S_n,~ r<s<t \rbrace ;~x_s\leq \max \{x_r,x_t \}
\Big\}.
%\end{split}
\end{align*}
Also Suppose $s_1=\left[ (b_1-M_1)\times 2^n \right] +1$, $s_2=\left[ 
(b_2-M_1)\times 2^n \right] +1$
 and define the set $W_n$ and the function $L_n$ 
from $W_n$ to $\mathbb{R}$ by 
%%%%%%%%%%%%%%%%%%
\begin{sloppypar}
\begin{align*}
&  W_n=\Big \lbrace (i,j)\in S_n\times S_n|~\exists (x_1,\cdots ,x_{(M_2-M_1)\times 
2^n})\in D_n;\\
\\
& \forall s\in \lbrace 1,\cdots , (M_2-M_1)\times 2^n\rbrace \backslash 
\{i\}:\\
& \Biggl( \left[ 
M_1+\frac{2i+sign(s_1-i)-1}{2^{n+1}}+(\frac{1-sign(s_1-i)}{2
})\epsilon_n\right]\\
&(1-\chi_{\{s_1\}}(i))  + b_1(\chi_{\{s_1\}}(i))-b_1\Biggr)^2
\\
& +x_i\leq 
\Biggl(\left[M_1+\frac{2s+sign(s_1-s)-1}{2^{n+1}}+(\frac{1-sign(s_
1-s)}{2})\epsilon_n\right]\\
&(1-\chi_{\{s_1\}}(s)) +b_1
(\chi_{\{s_1\}}(s))-b_1\Biggr)^2
 +x_s, \\
 \\
 & \forall r\in\{1,\cdots ,(M_2-M_1)\times 2^n\}\backslash \{j\}:\\
&\Biggl( \left[ 
M_1+\frac{2j+sign(s_2-j)-1}{2^{n+1}}+(\frac{1-sign(s_2-j)}
{2})\epsilon_n\right]\\
&(1-\chi_{\{s_2\}}(j)) + b_2(\chi_{\{s_2\}}(j))-b_2\Biggr)^2\\
& +x_j\leq 
\Biggl(\left[M_1+\frac{2r+sign(s_2-r)-1}{2^{n+1}}+(\frac{1-sign(s_
2-r)}{2})\epsilon_n\right]\\
&(1-\chi_{\{s_2\}}(r)) +b_2
(\chi_{\{s_2\}}(r))-b_2\Biggr)^2 +x_r \Big \rbrace , \\
\end{align*}
\begin{align*}
&L_n(i,j)=|a_1-\Biggl(\left[ 
M_1+\frac{2i+sign(s_1-i)-1}{2^{n+1}}+(\frac{1-sign(s_1-i)}{2
})\epsilon_n\right]\\
&(1-
\chi_{\{s_1\}}(i)) + b_1(\chi_{\{s_1\}}(i))\Biggr)|^2\\
& + |a_2-\Biggl(\left[ 
M_1+\frac{2j+sign(s_2-j)-1}{2^{n+1}}+(\frac{1-sign(s_2-j)}{2
})\epsilon_n\right]\\
&(1-
\chi_{\{s_2\}}(j)) + b_2(\chi_{\{s_2\}}(j))\Biggr)|^2.
\end{align*}
\end{sloppypar}
%%%%%%
Let $(i_n,j_n)$
 be a minimizer of the function $L_n$ for every  $n \in\mathbb{N}$
and \\
$(\mathcal{C}_1^n , \mathcal{C}_2^n , \cdots , \mathcal{C}^n_{(M_2-M_1)\times 
2^n})$ belongs to $D_n$ and be a solution to the following system of linear 
inequalities:
%%%%%%%%555
\begin{align*}
& \Biggl( \left[ 
M_1+\frac{2i_n+sign(s_1-i_n)-1}{2^{n+1}}+(\frac{1-sign(s_1-i_n)}{2})
\epsilon_n\right]\\
&(1-\chi_{\{s_1\}}(i_n)) + b_1(\chi_{\{s_1\}}(i_n))-b_1\Biggr)^2\\
& +x_{i_n}\leq 
\Biggl(\left[M_1+\frac{2s+sign(s_1-s)-1}{2^{n+1}}+(\frac{1-sign(s_
1-s)}{2})\epsilon_n\right]\\
&(1-\chi_{\{s_1\}}(s))
+b_1(\chi_{\{s_1\}}(s))-b_1\Biggr)^2 +x_s ~~ for~ s\in 
\{1,\cdots 
,(M_2-M_1)\times 2^n\}\backslash \{i_n\},\\
\\
& \Biggl( \left[ 
M_1+\frac{2j_n+sign(s_2-j_n)-1}{2^{n+1}}+(\frac{1-sign(s_2-j_n)}{2})
\epsilon_n\right]\\
&(1-\chi_{\{s_2\}}(j_n)) + b_2(\chi_{\{s_2\}}(j_n))-b_2\Biggr)^2\\
& +x_{j_n}\leq 
\Biggl(\left[M_1+\frac{2r+sign(s_2-r)-1}{2^{n+1}}+(\frac{1-sign(s_
2-r)}{2})\epsilon_n\right]\\
&(1-\chi_{\{s_2\}}(r)) +b_2(\chi_{
\{s_2\}}(r))-b_2\Biggr)^2 +x_r ~~ for~
r\in \{1,\cdots  
,(M_2-M_1)\times 2^n\}\backslash \{j_n\}.
\end{align*}
\\
Define the step function $\psi_n$ for every $n\in\mathbb{N}$ on $(M_1,M_2]$ as 
follows:
\begin{equation*}
\psi_n = \sum _{t=1}^{(M_2-M_1) \times 2^n} \mathcal{C}_t^n 
\chi_{{A}_t}
\end{equation*}
such that 
$${A}_t = \left( M_1 + \dfrac{t-1}{2^n} , M_1 + \dfrac{t}{2^n} 
\right];~1\leq t \leq (M_2-M_1) \times 2^n .$$\\
Now, we define the sets  $\mathcal{A}_n$ for  $n \in \mathbb{N}$:
\begin{align*}
\mathcal{A}_n&= \{\varphi \big | ~\varphi = \sum _{t=1}^{(M_2-M_1) \times 2^n} 
x_t \chi_{{A}_t};~ \forall r,s,t \in S_n,~ r<s<t  ;~x_s\leq \max \{x_r,x_t \} \}.
\end{align*}
Given {\it Fact 1}, it will be enough to prove that
$|h(\psi_n)-\inf_{\psi\in\mathcal{A}_n} h(\psi)|$  tends to $0$ as $n\rightarrow 
\infty.$\\
\\
Since the function $F_{\psi_n}$ is as follows:
$$F_{\psi_n}(x)=\begin{cases}
(x-b_1)^2+\mathcal{C}_1^n & x\in (M_1,M_1+\frac{1}{2^n}]\\
(x-b_1)^2+\mathcal{C}_2^n & x\in (M_1+\frac{1}{2^n},M_1+\frac{2}{2^n}]\\
\vdots & \vdots \\
(x-b_1)^2+\mathcal{C}_{(M_2-M_1)\times 2^n}^n & x\in 
(M_2-\frac{1}{2^n},M_2]
\end{cases}$$
 and moreover $(x-b_1)^2$ is decreasing over the interval $(M_1,b_1]$ and increasing over the interval $(b_1,M_2]$, we will have:
\begin{align*}
\exists c_n & \in \{(M_1+\frac{s}{2^n}-b_1)^2+\mathcal{C}^n_s|~s\in \{1,\cdots 
,s_1-1\}\}\cup\{\mathcal{C}^n_{s_1}\}\\
& \cup \{(M_1+\frac{s-1}{2^n}+\epsilon_n-b_1)^2+\mathcal{C}^n_s|~s\in \{s_1+1,\cdots 
,(M_2-M_1)\times 2^n\}\}
\end{align*}
such that:
$$|\min\{F_{\psi_n}(x)|~x\in (M_1,M_2]\}-c_n|\longrightarrow 0\quad \text{as} ~~~ 
n\longrightarrow \infty.$$\\
On the other hand, since we have 
\begin{align*}
 & \{(M_1+\frac{s}{2^n}-b_1)^2+\mathcal{C}^n_s|~s\in \{1,\cdots 
,s_1-1\}\}\cup\{\mathcal{C}^n_{s_1}\}\\
& \cup \{(M_1+\frac{s-1}{2^n}+\epsilon_n-b_1)^2+\mathcal{C}^n_s|~s\in \{s_1+1,\cdots 
,(M_2-M_1)\times 2^n\}\}\\
& = 
\{(M_1+\frac{2s+sign(s_1-s)-1}{2^{n+1}}+(\frac{1-sign(s_
1-s)}{2})\epsilon_n-b_1)^2\\
&~~~~(1-\chi_{\{s_1\}}(s))+
\mathcal{C}^n_s| ~s \in \{1,\cdots , (M_2-M_1)\times 2^n\}\},
\end{align*}
by assumptions, we deduce
\begin{align*}
&|\text{arg-min}_{x\in(M_1,M_2]}F_{\psi_n}(x)
-[(
M_1+\frac{2i_n+sign(s_1-i_n)-1}{2^{n+1}}+(\frac{1-sign(s_1-i_n)}{2})\epsilon_n
)\\
&(1-\chi_{\{s_1\}}(i_n)) + b_1(\chi_{\{s_1\}}(i_n))]|\longrightarrow 
0~~\text{as }~~n\rightarrow \infty.
\tag*{$(4\cdot 1)$}
\end{align*}\\
By a similar process like what was done for $F_{\psi_n}$, we will have:
\begin{align*}
&\vert \text{arg-min}_{x\in (M_1,M_2]}G_{\psi_n}(x)
-[(M_1+\frac{2j_n+sign(s_2-j_n)-1}{2^
{n+1}}
+(\frac{1-sign(s_2-j_n)}{2})
\epsilon_n)\\
&(1-\chi_{\{s_2\}}(j_n))
+ b_2(\chi_{\{s_2\}}(j_n))]|\longrightarrow 
0~~ \text{as}~~n\rightarrow +\infty.
\tag*{$(4\cdot 2)$}
\end{align*}\\
%%%%%%%%
Finally, since by assumptions we have $(i_n,j_n)=\inf_{(i,j)\in W_n}L_n(i,j)$, 
hence
\begin{align*}
&|\inf_{\psi\in\mathcal{A}_n} h(\psi)-|a_1-
[(
M_1+\frac{2i_n+sign(s_1-i_n)-1}{2^{n+1}}+(\frac{1-sign(s_1-
i_n)}{2})\epsilon_n)
\\
&(1-\chi_{\{s_1\}}(i_n)) + b_1(\chi_{\{s_1\}}(i_n))]|^2-|a_2-
[(
M_1+\frac{2j_n+sign(s_2-j_n)-1}{2^{n+1}}\\
&+(\frac{1-sign(s_2-j_n)}{2})\epsilon_n)
(1-\chi_{\{s_2\}}(j_n))
 + 
b_2(\chi_{\{s_2\}}(j_n))]|^2\big|\longrightarrow 0
~~\text{as} ~~ 
~ n\rightarrow +\infty.
\tag*{$(4\cdot 3)$}
\end{align*}\\
%%%%%%%%%%%%%%%%%%
Then, we conclude from (4.1), (4.2) and (4.3) that 
$$|h(\psi_n)-\inf_{\psi\in\mathcal{A}_n}
 h(\psi)|\rightarrow 0 \quad \text{as} ~~n\rightarrow 
\infty.$$
\end{proof}
%%%%%%%%%%%%%%%%%%%%%%%%%%%%%%
%%%%%
\begin{remark}\label{4.2}
In Regression, sometimes for better performance, a penalty term is added to the {\it error} function. To illustrate, suppose we have many finite points from the graph of the {\it sine} function over an interval and decided to measure the ability of the regression in retrieving this underlying function by the polynomials using these finite points. If the natural number $n$ is large enough, a polynomial of degree $n$ can be found to pass exactly through each point. But, the polynomial may be a very poor representation of the {\it sine} function, because it may have wild fluctuations or extremely large values in the interval, while the {\it sine} function is smooth and its values are only in the range of -1 to 1. One technique that is often used to control this behavior in such cases is that of {\it regularization}, which involves adding a penalty term to the {\it error} function in order to discourage the polynomial coefficients from reaching large values; see Section 1.1 in [7], for more elaboration.\\
  
This background from Regression was used as a simile to imply that here where
we are looking for a learning model by a training set in a Hilbert space, it may also
be necessary to make further efforts to obtain more desirable results. In the above
example, the proper selection of the model (a polynomial) depended on the proper
selection of the polynomial coefficients. However, since having the right model here
depends on choosing the right functions $\psi_n$ then we have to be careful in choosing
these functions. First of all, it should be noted that although the primary reason
for adding the quasiconvexity to the functions $\psi_n$ was to make the function (1.6)
have a minimizer, it is also a powerful condition that solely can prevent the occurrence of unusual forms for these functions' graphs (for instance, it can prevent wildly
oscillating). In addition, another attempt can be made for a further improvement:
Recall that the values $(\mathcal{C}_1^n , \mathcal{C}_2^n , \cdots , \mathcal{C}^n_{(M_2-M_1)\times 
2^n})$ to obtain the functions $\psi_n$ in Proposition 4.1 were solutions for linear systems of inequalities. For functions $\psi_n$ to be more appropriate choices, namely to have graphs without abrupt and extreme transverse changes when longitudinal changes are minor, we suggest that these values, rather than be arbitrary solutions for the linear systems in Proposition 4.1, be that solution of the linear system that minimizes the function $K_n$ presented in Proposition 2.3 on the set of this linear system's solutions. In other words, it would be a solution to the following constrained optimization problems: 
$$\min ~ K_n(x_1,\cdots ,x_{(M_2-M_1)\times 2^n})$$
%%%%%%%%%%%%%%%%%\
such that
\begin{sloppypar}
\begin{align*}
& \Biggl( \left[ 
M_1+\frac{2i_n+sign(s_1-i_n)-1}{2^{n+1}}+(\frac{1-sign(s_1-i_n)}{2})
\epsilon_n\right]\\
&(1-
\chi_{\{s_1\}}(i_n)) + b_1(\chi_{\{s_1\}}(i_n))-b_1\Biggr)^2\\
& +x_{i_n}\leq 
\Biggl(\left[M_1+\frac{2s+sign(s_1-s)-1}{2^{n+1}}+(\frac{1-sign(s_
1-s)}{2})\epsilon_n\right]\\
&(1-\chi_{\{s_1\}}(s))
+b_1(\chi_{\{s_1\}}(s))-b_1\Biggr)^2 +x_s\\
& for~ s\in 
\{1,\cdots 
,(M_2-M_1)\times 2^n\}\backslash \{i_n\},\\
\\
& \Biggl( \left[ 
M_1+\frac{2j_n+sign(s_2-j_n)-1}{2^{n+1}}+(\frac{1-sign(s_2-j_n)}{2})
\epsilon_n\right]\\
&(1-\chi_{\{s_2\}}(j_n)) + b_2(\chi_{\{s_2\}}(j_n))-b_2\Biggr)^2
\\
%\end{align*}
%\end{sloppypar}
%\begin{sloppypar}
%\begin{align*}
& +x_{j_n}\leq 
\Biggl(\left[M_1+\frac{2r+sign(s_2-r)-1}{2^{n+1}}+(\frac{1-sign(s_
2-r)}{2})\epsilon_n\right]\\
&(1-\chi_{\{s_2\}}(r))  +b_2(\chi_{
\{s_2\}}(r))-b_2\Biggr)^2 +x_r\\ 
&for~r\in
 \{1,\cdots  
,(M_2-M_1)\times 2^n\}\backslash \{j_n\}, \\
\\
& (x_1,\cdots ,x_{(M_2-M_1)\times 2^n})\in D_n.
\end{align*}
\end{sloppypar}
\end{remark}

\begin{theorem}\label{4.3}
Suppose $f_1,f_2,g_1,g_2 \in \mathcal{H}$ and $M_1, M_2 \in \mathbb{Z}$ is  
large enough such that $( f_1 )_{\gamma},( f_2 )_{\gamma},( g_1 )_{\gamma},( 
g_2 )_{\gamma} \in (M_1,M_2]$ for every $\gamma \in \Gamma$. 
Moreover, define the set  $\textbf{C}$ and the function  $\mathfrak{I}$  from  
$\textbf{C}$ to $\mathbb{R}$ by
%%%%%%%%%%%%%%%%%5
\begin{align*}
\textbf{C}=\lbrace \psi:\mathcal{H} \rightarrow \mathbb{R}\big 
|~&\psi(f)=\sum_{\gamma \in \Gamma} \psi_{\gamma}(f_{\gamma});~ \forall \gamma 
\in \Gamma: ~\psi_{\gamma}:(M_1,M_2]\rightarrow \mathbb{R}\\
~ &\text{is a regulated and quasiconvex function}\rbrace ,\\
\\
&\mathfrak{I}:\textbf{C}\longrightarrow \mathbb{R}\\
&\mathfrak{I}(\psi)=\sum_{i=1}^2 \Vert \tilde{f}_{i,\psi} - f_i \Vert^2\\
&;\tilde{f}_{i,\psi} =\text{arg-min}_{f
	\in \mathcal{H}_{(M_1,M_2]}} 
\Vert f- g_i \Vert ^2 +\psi(f).
\end{align*}
%%%%%%%%%%%%%%%
Then there exists a sequence $\lbrace\psi_n\rbrace_{n \in {\mathbb{N}}}$ in $\textbf{C}$ such that the functions $\lbrace\psi_\gamma\rbrace_{\gamma \in {\Gamma}}$ corresponding to each $\psi_n$ are step functions and
$$\mathfrak{I}(\psi_n)\longrightarrow \inf_{\psi \in 
	\textbf{C}}\mathfrak{I}(\psi)\quad \text{as} \quad n\rightarrow \infty.$$

\end{theorem}
%%%%%%%%%%%%%%%%%
\begin{proof}
Suppose $S_n=\{1,\cdots , (M_2-M_1)\times 2^n\}$ and  
$\lbrace \epsilon_n \rbrace_{n=1}^{\infty}$ is a sequence such that $\lim 
_{n\rightarrow \infty}\epsilon_n =0$. Define the set $D_n$ by
\begin{align*}
D_n&= \Big\{ (x_1,x_2, \cdots , x_{(M_2-M_1) \times 2^n}) \in 
\mathbb{R}^{(M_2-M_1) \times 2^n}\big |\\
&~~~~~~\forall (r,s,t) \in \lbrace (r,s,t)\big |~ r,s,t \in S_n,~ r<s<t \rbrace ;~x_s\leq \max \{x_r,x_t \} \Big\}.
\end{align*}
Also suppose $s_{1,\gamma}=[((g_1)_{\gamma} -M_1)\times 
2^n]+1$, $s_{2,\gamma}=[((
 g_2 )_{\gamma} -M_1)\times 2^n]+1$ 
and define the set $W_n^\gamma$ 
and  the function $L_n^\gamma$ from $W_n^\gamma$ to $\mathbb{R}$ by 
%%%%%%55555555
\begin{align*}
&  W_n^\gamma=\Big \lbrace
 (t_1,t_2)\in S_n\times S_n|~\exists (x_1,\cdots ,x_{(M_2-M_1)\times 
2^n})\in \mathbb{R}^{(M_2-M_1)\times 2^n};\\
\\
& \forall s\in \lbrace 1,\cdots , (M_2-M_1)\times 2^n\rbrace \backslash 
\{t_1\}:\\
& \Biggl( \left[ 
M_1+\frac{2t_1+sign(s_{1,\gamma}-t_1)-1}{2^{n+1}}+(\frac{1-sign(s_{1,\gamma}-t_1
)}{2})\epsilon_n\right]\\
&(1-\chi_{\{s_{1,\gamma}\}}(t_1)) + 
(g_1)_\gamma(\chi_{\{s_{1,\gamma}\}}(t_1))-(g_1)_\gamma\Biggr)^2\\
& +x_{t_1}\leq 
\Biggl(\left[M_1+\frac{2s+sign(s_{1,\gamma}-s)-1}{2^{n+1}}+(\frac{1-sign(s_
{1,\gamma}-s)}{2})\epsilon_n\right]\\
&(1-\chi_{\{s_{1,\gamma}\}}(s)
)+(g_1)_\gamma(\chi_{
\{s_{1,\gamma}\}}(s))-(g_1)_\gamma\Biggr)^2+x_s,\\
\\
&   \forall r\in\{1,\cdots ,(M_2-M_1)\times 2^n\}\backslash 
\{t_2\}:\\
& \Biggl( \left[ 
M_1+\frac{2t_2+sign(s_{2,\gamma}-t_2)-1}{2^{n+1}}+(\frac{1-sign(s_{2,\gamma}-t_2
)}{2})\epsilon_n\right]\\
&(1-\chi_{\{s_{2,\gamma}\}}(t_2)) + 
(g_2)_\gamma(\chi_{\{s_{2,\gamma}\}}(t_2))-(g_2)_\gamma\Biggr)^2\\
& +x_{t_2}\leq 
\Biggl(\left[M_1+\frac{2r+sign(s_{2,\gamma}-r)-1}{2^{n+1}}+(\frac{1-sign(s_
{2,\gamma}-r)}{2})\epsilon_n\right]\\
&(1-\chi_{\{s_{2,\gamma}\}}(r)
)+(g_2)_\gamma(\chi_{
\{s_{2,\gamma}\}}(r))-(g_2)_\gamma\Biggr)^2 +x_r \Big \rbrace,
\end{align*}
\begin{align*}
L_n^\gamma(t_1,t_2)=&|(f_1)_\gamma-
 \Biggl( \left[ 
M_1+\frac{2t_1+sign(s_{1,\gamma}-t_1)-1}{2^{n+1}}+(\frac{1-sign(s_{1,\gamma}-t_1
)}{2})\epsilon_n\right]\\
&(1-
\chi_{\{s_{1,\gamma}\}}(t_1)) + 
(g_1)_\gamma(\chi_{\{s_{1,\gamma}\}}(t_1))\Biggr)|^2\\
& + |(f_2)_\gamma-
 \Biggl( \left[ 
M_1+\frac{2t_2+sign(s_{2,\gamma}-t_2)-1}{2^{n+1}}+(\frac{1-sign(s_{2,\gamma}-t_2
)}{2})\epsilon_n\right]\\
&(1-
\chi_{\{s_{2,\gamma}\}}(t_2)) + 
(g_2)_\gamma(\chi_{\{s_{2,\gamma}\}}(t_2))\Biggr)|^2.
\end{align*}
Let $(t_1^{n,\gamma},t_2^{n,\gamma})$
 be a minimizer of the function $L_n^\gamma$ for every  $n \in\mathbb{N}$
and\\ 
$(\mathcal{C}_1^{n,\gamma} , \mathcal{C}_2^{n,\gamma} , \cdots , \mathcal{C}^{n,\gamma}_{(M_2-M_1)\times 
2^n})$ belongs to $D_n$ and be a solution to the following system of linear 
inequalities:
\\
\begin{align*}
& \Biggl( \left[ 
M_1+\frac{2t_1^{n,\gamma}+sign(s_{1,\gamma}-t_1^{n,\gamma})-1}{2^{n+1}}+
(\frac{1-sign(s_{1,\gamma}-t_1^{n,\gamma})}
{2})\epsilon_n\right]\\
&(1-\chi_{\{s_{1,\gamma}\}}(t_1^{n,\gamma}))
 + 
(g_1)_\gamma(\chi_{\{s_{1,\gamma}\}}(t_1^{n,\gamma}))-(g_1)_\gamma\Biggr)^2\\
& +x_{t_1^{n,\gamma}}\leq 
\Biggl(\left[M_1+\frac{2s+sign(s_{1,\gamma}-s)-1}{2^{n+1}}+(\frac{1-sign(s_
{1,\gamma}-s)}{2})\epsilon_n\right]\\
&(1-\chi_{\{s_{1,\gamma}\}}(s)
)+(g_1)_\gamma(\chi_{
\{s_{1,\gamma}\}}(s))-(g_1)_\gamma\Biggr)^2+x_s\\
& \text{for}~ s\in 
\{1,\cdots  
,(M_2-M_1)\times 2^n\}\backslash \{t_1^{n,\gamma}\},\\
\\
& \Biggl( \left[ 
M_1+\frac{2t_2^{n,\gamma}+sign(s_{2,\gamma}-t_2^{n,\gamma})-1}{2^{n+1}}+
(\frac{1-sign(s_{2,\gamma}-t_2^{n,\gamma})}
{2})\epsilon_n\right]\\
&(1-\chi_{\{s_{2,\gamma}\}}(t_2^{n,\gamma})
 + 
(g_2)_\gamma(\chi_{\{s_{2,\gamma}\}}(t_2^{n,\gamma}))-(g_2)_\gamma\Biggr)^2\\
& +x_{t_2^{n,\gamma}}\leq 
\Biggl(\left[M_1+\frac{2r+sign(s_{2,\gamma}-r)-1}{2^{n+1}}+(\frac{1-sign(s_
{2,\gamma}-r)}{2})\epsilon_n\right]\\
&(1-\chi_{\{s_{2,\gamma}\}}(r)
)+(g_2)_\gamma(\chi_{
\{s_{2,\gamma}\}}(r))-(g_2)_\gamma\Biggr)^2 +x_r\\
& \text{for}~
r\in \{1,\cdots  
,(M_2-M_1)\times 2^n\}\backslash \{t_2^{n,\gamma}\}.
\end{align*}\\
%%%%%%%%%%%%%%% 
Define the  function $\psi_n$ for every $n\in\mathbb{N}$ from $\mathcal{H}$ to 
$\mathbb{R}$ as follows:
%%%%%%%%%%%%%%
\begin{align*}
&\psi_n:\mathcal{H}\rightarrow \mathbb{R}\\
&\psi_n(f)=\sum_{\gamma \in \Gamma} \psi_{n,\gamma}(f_{\gamma}),
\end{align*}
where  $\psi_{n,\gamma}$ for every $n\in \mathbb{N}$ and $\gamma \in \Gamma$ is 
a step function from $(M_1,M_2]$ to  $\mathbb{R}$ which is defined by 
\begin{equation*}
\psi_{n,\gamma} = \sum _{t=1}^{(M_2-M_1) \times 2^n} \mathcal{C}_t^{n,\gamma} 
\chi_{{A}_t}
\end{equation*}
such that 
${A}_t = \left( M_1 + \dfrac{t-1}{2^n} , M_1 + \dfrac{t}{2^n} 
\right];~1\leq t \leq (M_2-M_1) \times 2^n .$
Now, we define the set  $\mathcal{A}$ for every  $\gamma \in \Gamma$:
\begin{equation*}
\mathcal{A}=\{ \psi \big|~\psi:(M_1,M_2]\rightarrow \mathbb{R},~\psi ~ 
\text{is a regulated and quasiconvex function}\}.
\end{equation*}
By Proposition 4.1 
for every $\gamma\in \Gamma$ we have :
\begin{align*}
&\Big\vert(f_1)_{\gamma} -
\text{arg- min}_{x \in 
	(M_1,M_2]}\left[(x-(g_1)_\gamma)^2+ 
\psi_{n ,\gamma}(x)\right]|^2\\
& +\Big\vert(f_2)_{\gamma} -
\text{arg-min}_{x \in (M_1,M_2]}\left[(x-(g_2)_\gamma)^2+
\psi_{n ,\gamma}(x)\right]|^2
\\
&\longrightarrow 
\inf_{\psi_{\gamma} \in \mathcal{A}} \Big\vert (f_1)_{\gamma}-\text{arg-min}_{x \in (M_1,M_2]}
\left[(x-(g_1)_\gamma)^2+
\psi_\gamma(x)\right]|^2\\
&+\Big\vert(f_2)_\gamma- \text{arg-min}_{x \in (M_1,M_2]}
\left[(x-(g_2)_\gamma)^2+
\psi_\gamma(x)\right]\Big\vert^2.
\end{align*}
The remainder of the proof is compeletly similar to what was done for Theroem 
2.5 in Section 2. 
\end{proof}
What was addressed in this section and Section 2 was efforts to solve special cases of the problem (2.10) in terms of an application in signal denoising. However, it could possibly be viewed as other perspectives and applied in other applications. Moreover, this problem may possibly be useful in classification. Ideas around this are presented in the following remark: 
\begin{remark}\label{4.4}
In this remark, we present some schemes that it is desired to be beneficial for the classification of some signal sets in which the signals of each set have a common feature. For a theoretical setting, suppose $\mathcal{A}_j,~ j=1,\dots,m$ are sets of image samples that images in each set have been degraded by a common noise model. Now, regarding these data, we want to find an approach to determine that an image $f^\circ$ with an unknown noise model belongs to which of these sets. By considering the minimization problem (2-10), two approaches could be possible:\\ 
The first one is similar to other classification approaches in literature (see e.g., [50, 51]): considering $\mathcal{H'}=\mathbb{R}$ and $\textbf{C}$ is a single-member set (e.g., $L^2$-norm), and corresponding images in $\mathcal{A}_j=\lbrace f^j_i\rbrace_{i=1}^{n}$ to $j$ for $j=1,..,m$, i.e., finding the minimization of the following function: 
\begin{align*}
& \Phi:\textbf{U}\longrightarrow \mathbb{R}\\
& \Phi(K)=\sum_{j=1}^m \sum_{i=1}^n\|\tilde{f}_{j}-f^j_i\|_{\mathcal{H}}^2\\
&
;\tilde{f}_{j}=\text{arg-min}_{f\in \mathcal{H}}\|Kf-
j\|_{\mathcal{H}'}^2+\Vert f \Vert^2
\end{align*}  
where $U$ is a class of operators. If $K^*$ is the minimizer of the (4-10), by the value of $K^*(f^\circ)$, we can determine that $f^\circ$ to which set belongs. \\
The second approach could be by corresponding all images in the set $\mathcal{A}_j$ to a fixed image $g_j$, which could be an image with the same level of darkness, or it could be defined as follows: 
                    $$g_j=\sum_{\gamma \in \Gamma} j\varphi_{\gamma}.$$ 
In this case, the following minimization problem should be solved: 
\begin{align*}
& \Phi:\textbf{C}\times\textbf{U}\longrightarrow \mathbb{R}\\
& \Phi(\psi,K)=\sum_{j=1}^n \sum_{i=1}^n\|\tilde{f}_{j}-f_i^j\|_{\mathcal{H}}^2\\
&
;\tilde{f}_{j}=\text{arg-min}_{f\in \mathcal{H}}\|Kf-
g_j\|_{\mathcal{H}'}^2+\psi(f)
\tag*{$(4\cdot 4)$}
\end{align*}  
where $\textbf{C}$ is again as defined in Introduction and $U$ is a class of operators. Then, if $(K^*, \psi^*)$ is the minimizer, $f^\circ$ will belong to $\mathcal{A}_{j_\circ}$ for 
       $$j_\circ=\text{arg-min}_{j \in \lbrace1,\dots,m\rbrace}\|g^*-g_j\|_{\mathcal{H}'}^2$$ 
where 
              $$g^*=\text{arg-min}_{f\in \mathcal{H}}\|K^*(f)-f^\circ\|_{\mathcal{H}'}^2+\psi^*(f).$$ 
In the second approach, in addition to the operator, the regularization term also is selected among a class of functions. In (4.4), considering $B_{W,P}$ instead of $\textbf{C}$ for the class of regularization terms will convert the problem to an easier one for solving as will be discussed later in Section 6. 
\end{remark}
Finally, this section is concluded by the following remark about unifying different regularization terms: 
\begin{remark}\label{4.5}
This remark is dedicated to clarify how utilizing classification can promote a better learning method in the case that signal samples have different kinds of noises. Generally speaking, suppose there are $m$ different noise models and $m$ sample sets of images. Moreover, suppose $m$ sample sets of noisy images have been achieved by adding the different noise models to the sample sets of images. Indeed, providing a learning method by these sample sets with the capability of modifying all kinds of noises will be addressed. For a theoretical setting, suppose $\lbrace f_{1, j},\dots,f_{n, j}\rbrace$ for $j=1,…,m$ are $m$ different sample sets of original images and $g_{i,j}=f_{i,j}+e_{i,j}$ for $i=1,\dots,n,~ j=1,\dots,m$ are noisy images where $\lbrace e_{i,j} \rbrace_{i=1}^n$ for every $j$ is a set of noises with same models. Now, suppose the noisy image $g^\circ$ which degraded by one of these noise models randomly; that means, we are not aware of the noise models. The aim is providing a desirable minimization problem with an appropriate regularization penalty for denoising the $g^\circ$ regarding $(f_{i,j}, g_{i,j});~ i=1,\dots,n,~ j=1,\dots,m$.\\
First, it should be noticed that the following minimization problem:
                      $$ \text{arg-min}_{f\in \mathcal{H}}\|f-g^\circ\|^2+\psi_\circ(f),$$
where the regularization $\psi_\circ$ is achieved by $\lbrace(f_{i,j}, g_{i,j});~ i=1,\dots,n,~ j=1,\dots,m\rbrace$, could not be the best choice because the training sets regarding noise models which are different from the $g^\circ$’s noise model contribute to building $\psi_\circ$. To fix this shortcoming, we suggest the following minimization problems:
                         $$\text{arg-min}_{f\in \mathcal{H}}\|f-g^\circ\|^2+\psi_j(f);~~1\leq j \leq m,$$ 
where the regularization $\psi_j$ is created regarding $\lbrace(f_{i,j}, g_{i,j}); ~i=1,\dots,n\rbrace$. We know one of these minimization problems is appropriate for denoising $g^\circ$; however, since the kind of noise model is unknown, the problem is `which one should be applied'. We can solve this problem by employing one of the classification approaches provided in Remark 4.4 or another suitable possible one that possibly exists in the literature (see e.g., [52, 53]); namely, we can classify $\lbrace g_{i,j}; ~i=1,..,n \rbrace$ for $j=1,..,m$ i.e. there exists a $\varphi$ in a way that 
                                   $$\varphi (g_{i,j}) \simeq j~~\text{for}~~ j = 1,\dots,m.$$ 
Therefore, if $\lbrace g_{i,j}; ~i=1,..,n \rbrace$ for $j=1,..,m$ is large enough and the classification approach has a well performance, it is expected that 
                            $$\varphi(g) \in (j-1/2, j+1/2)$$ 
where $g$ is a noisy image degraded by the $j$th noise model. Consequently, the following minimization problem will be an appropriate one for denoising $g^\circ$: 
                      $$\text{arg-min}_{f\in \mathcal{H}}\|f-g^\circ\|^2+\lambda_1\psi_1(f)+\dots+\lambda_m \psi_m(f),$$ 
where $\lambda_j=1-\text{sign}[2\vert\varphi(g^\circ)-j\vert]$. \\
For the end of this remark, it has been provided some discussions or questions in the hope that some ideas are derived from them for future works: \\
Firstly, it should be mentioned that it seems a similar approach like ones addressed here – i.e. adding classification to the learning process – could possibly have been done about other learning methods in which the approach like this paper is matching the pair elements from two sample sets; like the {\it{image to image translation}} in {\it{Deep Neural Network}}; see [49]. \\
Secondly, it is about the possibility of creating a denoising formula by the capability of modifying any noisy image regardless of the type of its noise. We know that the humans' perception of noise and the unpleasant feeling they have from it is related to the information they already have in their mind. The collection of images; stored in the memory of a person over time from birth, leads to the formation of an expectation for them which is in contrast to what it receives from a noisy image. However, the volume of information stored in memory is too high, they are limited. On the other hand, the power of human vision is limited as well; in other words from a point on, increasing the number of pixels in an image does not change the image quality from a human point of view. The question now is whether, based on these limitations, it can be argued that a denoising formula with the ability to remove a finite number of noises’ type could possibly modify any image that is noisy from the human point of view. If the answer to this question is yes, another question arises that: Is it possible to provide a super formula that can fulfill  this task with the idea presented in this remark? \\
The third is a discussion that comes around the subject of upgrading a learning model that we have prior to a better one after the arrival of a new training set. Suppose we have found a regularization term regarding a training set in prior. Now, a new training set has come in hand and we want to promote our learning method regarding this new one. In the case that the noise model that was added to original signals in the new training set is not the same as the noise model of the prior training set, we will not have a serious challenge because in this case we merely need to build a new regularization regarding the new training set and add it to the minimization problem which is explained in this remark. However, a discussion is needed when the noise models are the same. In this situation, regardless of the volume of computational tasks, upgrading could be happening by replacing the regularization term with a new one that is regarded as a training set including both the new training set and the prior one ; namely, considering both training sets as one unique training set. But questions or discussions arise around the possibility of approaches to promote the model when utilizing the prior efforts and computational tasks that have been done for the model is a matter of concern; in other words , it is important to save on the volume of computational tasks. In this case, one solution (not probably the best one) could be adjusting the regularization of the model by the new training set. For instance, trying to find the solution to the following minimization problem: 
\begin{align*}
&\mathfrak{I}:\mathbb{R}^2 \longrightarrow \mathbb{R}\\
&\mathfrak{I}(a, b)=\sum_{i=1}^2 \Vert \tilde{f}_{i, a, b} - f_i \Vert^2\\
&;\tilde{f}_{i, a, b} =\text{arg-min}_{f
	\in \mathcal{H}} 
\Vert f- g_i \Vert ^2 +P_{a,b}(\psi(f)).
%\end{split}
\end{align*}
where $\psi$ is the regularization term regarding the prior training set, $\lbrace(f_i,g_i)\vert ~ i=1,\dots,n\rbrace$ is the new training set and $P_{a,b}(\psi(f))=a(\psi(f))+b$ where $(a,b) \in \mathbb{R}^2$. However, it seems further discussions are needed on this. 
\end{remark}

%%%%%%%%%%%%%%%%%%%%}
%%%%%%%%%%%%%%%%%%%%%%%%

\section{Implementing the learning methods }

In this section, we describe algorithms associated with the learning methods presented in Sections 2, 3 and 4. Because Theorem 2.5 in Section 2, Theorem 3.2 in Section 3 and Theorem 4.3 in Section 4 must be executed in the form of a computer program, to clarify how to write a program related to these theorems, we will state each of them in the steps of an algorithm. However, here $m$ in the training set $(f_i, g_i)$ for $i=1,\cdots,m$ is considered an arbitrary element of $\mathbb{N}$ instead of $m=2$.\\ 

It should be noted that the learning method provided in this paper could also be built in terms of another view of using data, namely, to apply a set of ground signals $\lbrace{f_i}\vert~ i=1,\cdots,m\rbrace$ and a specific artificial noise model. For instance, suppose that we want to find a regularization for removing Gaussian noise. By a minor change in our approach, we can reach to an algorithm for this aim. If we build $g_i$'s by adding Gaussian noise to $f_i$'s, the best regularization for Gaussian noise can be attained, however, a very big set of $f_i$'s is probably needed to have a desirable outcome in this case. All methods in Sections 2, 3 and 4 could be rewritten from this perspective. To illustrate, the algorithm in Subsection 5.2 will be written accordingly.\\

\subsection{An algorithm for finding the optimal regularization by the approach presented in Section 2}
In this subsection, we present the steps of an algorithm that is associated with Theorem 2.5. The volume of computational tasks that are needed for the computer program regarding this algorithm is less than what is needed for the algorithms in the two next subsections. 
As we have already mentioned in Section 3, Multi-resolution analyses can be the options to be considered as the bases in the learning method.  Therefore, in one of the first steps of the algorithm a ``{\it scaling function}" is taken to build a multi-resolution analysis $\lbrace V_i \rbrace$  regarding it. Then, the signals are projected to a space $V_j$ for a suitable $ j$. Afterwards, remaining steps would consist of signals' decomposition, processing and reconstruction:
\paragraph{Step 1:}
get $n,m,\epsilon$ from input. 
\begin{itemize}
\item[ ]
{\small{\textit{
``$\epsilon$ should be the minimum value that can be 
defined for the computer."}}}
\end{itemize}
%%%%%%%%%%\
\paragraph{Step 2:}
get $(f_i,g_i)$ for $i=1,\cdots , m$ from input.
\begin{itemize}
\item[ ]
{\small{\textit{
 ``$g_i$'s are noisy signals and $f_i$'s are the original signals i.e. 
$g_i=f_i+e_i$."
}}}
\end{itemize}
%%%%%%%%%%%%%
%%%%%%%%%%\
\paragraph{Step 3:}
get $g$ from input.
\begin{itemize}
\item[ ]
{\small{\textit{
``$g$ is the noisy signal that its noise is supposed to be removed by the model 
that will be obtained from the set $\{(f_i,g_i)|~i=1,\cdots , m\}$."
}}}
\end{itemize}
%%%%%%%%%%%%%
%%%%%%%%%%\
\paragraph{Step 4:}
get the scaling function $\phi$ from input.
\begin{itemize}
\item[ ]
{\small{\textit{
``By the scaling function $\phi$, the associated multi-resolution analysis $\lbrace V_j; ~j\in\mathbb{Z} \rbrace$ is clarified:
$$V_j=\langle \lbrace 2^{j/2}\phi(2^jx-k); ~k\in\mathbb{Z} \rbrace \rangle.$$
However, the exact definition of it is not needed, because only the associated scaling coefficients are used in the decomposition algorithm and reconstruction algorithm."
}}}
\end{itemize}
\paragraph{Step 5:}
get $j_\circ$ from input.
\begin{itemize}
\item[ ]
{\small{\textit{
``$V_{j_\circ}$ is the space on which $ f_i $'s, $ g_i $'s and $ g $ are supposed to be projected."
}}}
\end{itemize}
\paragraph{Step 6:}
\begin{align*}
&a_{k, i}:=\int_{-\infty}^{+\infty} f_i(x)\overline{\phi(2^{j_\circ}x-k)} dx;\quad k\in {\mathbb{Z}},~~i=1,\cdots,m.\\
&
b_{k, i}:=\int_{-\infty}^{+\infty} g_i(x)\overline{\phi(2^{j_\circ}x-k)} dx;\quad k\in {\mathbb{Z}},~~i=1,\cdots,m.\\
&
b_{k}:=\int_{-\infty}^{+\infty} g(x)\overline{\phi(2^{j_\circ}x-k)} dx;\quad k\in {\mathbb{Z}}.\\
\end{align*}

\begin{itemize}
\item[ ]
{\small{\textit{
``Note that we have:
\begin{align*}
&P_{j_\circ}f_i=\sum_{k\in {\mathbb{Z}}} a_{k, i}\phi(2^{j_\circ}x-k) ;\quad i=1,\cdots,m,\\
&
P_{j_\circ}g_i=\sum_{k\in {\mathbb{Z}}} b_{k, i}\phi(2^{j_\circ}x-k) ;\quad i=1,\cdots,m,\\
&
P_{j_\circ}g=\sum_{k\in {\mathbb{Z}}} b_{k}\phi(2^{j_\circ}x-k) 
\end{align*}
where $P_{j_\circ}f_i$, $P_{j_\circ}g_i$ and $P_{j_\circ}g$ are the orthogonal projections of $f_i$, $g_i$ and $g$ onto $V_{j_\circ}$."
}}}
\end{itemize}
\paragraph{Step 7:}
determine sets $\lbrace(f_i)_\gamma\vert~\gamma \in {\Gamma}\rbrace$, $\lbrace(g_i)_\gamma \vert ~ \gamma \in {\Gamma}\rbrace$, $\lbrace g_\gamma \vert ~ \gamma \in {\Gamma}\rbrace$ by decomposing $(a_{k, i})_{k \in \mathbb{Z}}$, $(b_{k, i})_{k \in \mathbb{Z}}$, $(b_{i})_{k \in \mathbb{Z}}$ respectively by the iterative process for $i=1,\cdots,m$.
\begin{itemize}
\item[ ]
{\small{\textit{
``This step is the decomposition algorithm by using the scaling coefficients."
}}}
\end{itemize}
\paragraph{Step 8:}
determine integers $M_1$ and $M_2$ such that $\forall \gamma \in \Gamma, ~
\forall i;~ i=1,\cdots , m:$
$$ (f_i)_\gamma, (g_i)_\gamma \in (M_1,M_2].$$
%%%%%%%%%%%%%%%%%%%%%
\paragraph{Step 9:}
$S_n:=\{1,\cdots ,(M_2-M_1)\times 2^n\}$.\\
%%%%%%%%%%%
\paragraph{Step 10:}
$A_t:=(M_1+\frac{t-1}{2^n},M_1+\frac{t}{2^n}]; ~1\leq t\leq (M_2-M_1)\times 
2^n$.\\
%%%%%%%%%%%
\paragraph{Step 11:}
Perform substeps 11.1 to 11.7 for every $\gamma\in \Gamma$: \\
\\
\\
{\it Substep 11.1}\\ 
$$s_i:=[((g_i)_\gamma-M_1)\times 2^n]+1; ~1\leq i\leq m.$$
{\it Substep 11.2}\\
$$t_i:=[((f_i)_\gamma-M_1)\times 2^n]+1; ~1\leq i\leq m.$$ 
{\it Substep 11.3}\\
\begin{align*}
%\begin{split}
K_n \left(x_1,x_2, \cdots , x_{(M_2-M_1) \times 2^n} \right) 
:=\sum_{i=1}^m&\Big\vert \min \lbrace (M_1 + \dfrac{s}{2^n}-( g_i )_{\gamma})^2+x_s -(( f_i 
)_{\gamma}-( g_i )_{\gamma})^2 -x_{t_i} \big | \\
&~1\leq s \leq {s_i}-1 \rbrace \cup \lbrace x_{s_i}-(( f_i 
)_{\gamma}-( g_i )_{\gamma})^2 -x_{t_i} \rbrace \\
&\cup \lbrace (M_1 + \dfrac{s-1}{2^n}+\epsilon -( g_i )_{\gamma})^2 +x_s -(( 
f_i )_{\gamma}-( g_i )_{\gamma})^2 -x_{t_i}\big |\\
&{s_i}+1\leq s \leq (M_2-M_1) \times 2^n \rbrace \Big\vert^2.\\
%\end{split}
\end{align*}
%%%%%%%%%\
{\it Subsetp 11.4}~~
determine a solution of the following system and call it 
$(\mathcal{C}_1,\cdots , \mathcal{C}_{(M_2-M_1)\times 2^n})$:
$$\min ~ K_n(x_1,\cdots ,x_{(M_2-M_1)\times 2^n})$$
%%%%%%%%%%%%%%%%%\
such that:
\begin{align*}
& |x_r-x_t|+x_r+x_t-2x_s\geq 0
\quad \text{for} \quad (r,s,t)\in\{(r,s,t)|~r,s,t\in S_n,~ r<s<t\}.\\
\end{align*}
%%%%%%%%%5

%%%%%%%%%%%%%%%%%%55
\noindent
{\it Substep 11.5}\\
$$\psi(x):=\sum_{t=1}^{(M_2-M_1)\times 2^n}
\mathcal{C}_t\chi_{A_t}(x).$$
{\it Substep 11.6}\\
$$\Phi(x)=(x-g_\gamma)^2+\psi(x).$$
{\it Substep 11.7}\\ 
$$f_\gamma:=\text{arg-min}_{x\in (M_1,M_2]}\Phi(x).$$
%%%%%%%
\paragraph{Step 12:}
determine $(a_k)_{k \in \mathbb{Z}}$ from $ \lbrace f_{\gamma}\vert~\gamma \in \Gamma\rbrace $ by the reconstruction algorithm. 
\begin{itemize}
\item[ ]
{{\small\textit{
``This step is the reconstruction algorithm by using the scaling coefficients."
}}}
\end{itemize}
\paragraph{Step 13:}
\begin{align*}
&f:=\sum_{k\in {\mathbb{Z}}} a_{k}\phi(2^{j_\circ}x-k). 
\end{align*}
%%%%%%%%%%%%%%%%

\paragraph{Step 14:}
give $f$ as output.
\begin{itemize}
\item[ ]
{{\small\textit{
``$f$ is a modified signal for the signal $g$ that was taken at the step 3."
}}}
\end{itemize}

\subsection{The optimal ``multi-resolution system" and ``regularization"}
The importance of a multi-resolution analysis and choosing it for an application depends on the characteristics of the signals' class in that application. For instance, in all multi-resolution analysis literature, the following elucidation about the importance of inventing the Daubechies wavelets can be seen: ``The drawback with the Haar decomposition algorithm is that both Haar scaling function and Haar wavelet are discontinuous. As a result, the Haar decomposition algorithm provides only crude approximations to a continuously varying signal". Other similar discussions around comparing other different multi-resolution systems exist in the literature as well. Moreover, the kind of noise model probably could also be important in choosing a multi-resolution system. As a whole, a multi-resolution system has not an intrinsic value, however, its value is related to the kind of data or the noise model that supposed to be removed from signals. This shows the importance of presenting a framework by which the ``scaling function" is attained from data. This is what is done here by providing an algorithm extracted from Theorem 3.2. \\

In this subsection, the optimal ``regularization" from set $\textbf{C}$ and ``multi-resolution analysis" from a category of multi-resolution analyses, provided in Theorem ($\ast$) in Remark 3.5, will be achieved for removing a specific noise model from a set of signals - that are in the same class. In other words, the input of the algorithm is a collection of noises $\lbrace e_i \rbrace$ with same probability distributions and a collection of signals $\lbrace f_i \rbrace$, whereas the outputs are a regularization term $\psi$ and a set $\lbrace p_1, p_2,\cdots, p_n\rbrace$ that includes coefficients in the scaling relation of the best scaling function $\phi$. The values of the scaling function $\phi$ could later be attained by the iterative method given in Theorem ($\ast$) or by a similar method presented in Section 6.4 in [8] with less computation tasks. However, the fact that the coefficients in the scaling relation are only utilized in the decomposition and reconstruction process is a point that has a key role in writing the algorithm: 
\paragraph{Step 1:}
get $n,m, l,\epsilon$ from input. 
\begin{itemize}
\item[ ]
{\small{\textit{
``$\epsilon$ should be the minimum value that can be 
defined for the computer. Moreover, l is the number of scaling coefficients."}}}
\end{itemize}
%%%%%%%%%%\
\paragraph{Step 2:}
get $(f_i,e_i)$ for $i=1,\cdots , m$ from input.\\
%%%%%%%%%%%%%
%%%%%%%%%%\
\paragraph{Step 3:}
get $\lbrace k_1,\cdots, k_l\rbrace \subseteq \mathbb{Z} $ from input.
\begin{itemize}
\item[ ]
{\small{\textit{
``we work under the assumption that $P(z)=(1/2){ \sum_k {p_k z^k}}$ in Theorem ($\ast$) in Remark 3.5 is a polynomial with finite nonzero coefficients i.e. for $k \in {\mathbb{Z}}\setminus {\lbrace k_1,\cdots, k_l\rbrace};~p_k=0 $."}}}
\end{itemize}
%%%%%%%%%%%%%
%%%%%%%%%%\
\paragraph{Step 4:}
$g_i:=f_i+e_i;~i=1,\cdots , m. $ 
\begin{itemize}
\item[ ]
{\small{\textit{
``$ g_i $'s are noisy signals corresponding to $ f_i $'s. It is supposed to obtain a learning method from the training set $ \lbrace (f_i,g_i)\vert~i=1,\cdots,m\rbrace $ ."
}}}
\end{itemize}
\paragraph{Step 5:}
get $j_\circ$ from input.
\begin{itemize}
\item[ ]
{\small{\textit{
``$V_{j_\circ}$ is the space on which $ f_i $'s, $ g_i $'s and $ g $ are supposed to be projected."
}}}
\end{itemize}
\paragraph{Step 6:}
\begin{align*}
&a_{k, i}:= f_i(k/{2^{j_\circ}});\quad k\in {\mathbb{Z}},~~i=1,\cdots,m.\\
&
b_{k, i}:= g_i(k/{2^{j_\circ}});\quad k\in {\mathbb{Z}},~~i=1,\cdots,m.\\
\end{align*}

\begin{itemize}
\item[ ]
{\small{\textit{
``Note that we have:
\begin{align*}
&2^{j_\circ}\int_{-\infty}^{+\infty} f(x)\overline{\phi(2^{j_\circ}x-k)} dx\simeq m f(k/{2^{j_\circ}})\\
\end{align*}
where $m= \int \overline{\phi(x)}dx$. Since we are working under the assumptions in Theorem ($\ast$) in Remark 3.5, we will have $\int \overline{\phi(x)}dx=1.$"
}}}
\end{itemize}
\paragraph{Step 7:}
determine sets $\lbrace (f_i)_{\gamma, (p_1,\cdots,p_l)}\vert ~ \gamma \in \Gamma \rbrace $, $\lbrace (g_i)_{\gamma, (p_1,\cdots,p_l)}\vert ~ \gamma \in \Gamma \rbrace $ by decomposing $ (a_{k, i})_{k \in \mathbb{Z}} $, $ (b_{k, i})_{k \in \mathbb{Z}} $ respectively by considering $ p_1,\cdots, p_l $ as scaling coefficients.
\begin{itemize}
\item[ ]
{\small{\textit{
``This step is the decomposition algorithm by using the scaling coefficients. Since $ p_1,\cdots, p_l $ are unknown quantities to be found, then the outcomes of decomposition algorithm consist of these variables. That's why these outcomes were represented by $ (f_i)_{\gamma, (p_1,\cdots,p_l)}$, $(g_i)_{\gamma, (p_1,\cdots,p_l)}$ for $ i=1,\cdots,m.$"
}}}
\end{itemize}
\paragraph{Step 8:}
determine integers $M_1$ and $M_2$ such that $\forall \gamma \in \Gamma$,~ 
$\forall i;~ i=1,\cdots , m:$\\ 
$$(f_i)_{\gamma, (p_1,\cdots,p_l)}, (g_i)_{\gamma, (p_1,\cdots,p_l)} \in (M_1,M_2].$$
\begin{itemize}
\item[ ]
{\small{\textit{
``Note that $ p_1,\cdots, p_l $ are variables which their values are supposed to be specified at the end of the algorithm. Therefore, we don't have the values of $ (f_i)_{\gamma, (p_1,\cdots, p_l)}$, $(g_i)_{\gamma, (p_1,\cdots, p_l)}$ in this step. However, $(M_1, M_2]$ should be large enough such that for all reasonable choices of $ (p_1,\cdots, p_l) $, we have $(f_i)_{\gamma, (p_1,\cdots,p_l)}, (g_i)_{\gamma, (p_1,\cdots,p_l)} \in (M_1,M_2]$ for $ i=1,\cdots,m, \gamma \in \Gamma. $"
}}}
\end{itemize}
\paragraph{Step 9:}
$S_n:=\{1,\cdots ,(M_2-M_1)\times 2^n\}$.\\
%%%%%%%%%%%
\paragraph{Step 10:}
$A_t:=(M_1+\frac{t-1}{2^n},M_1+\frac{t}{2^n}]; ~1\leq t\leq (M_2-M_1)\times 
2^n$.\\
%%%%%%%%%%%
\paragraph{Step 11:}
Perform subsetps 11.1 to 11.3 for every $\gamma\in \Gamma$: \\
\\
\\
{\it Substep 11.1}\\ 
$$s_{i, (p_1,\cdots, p_l)}:=[((g_i)_{\gamma, (p_1,\cdots, p_l)}-M_1)\times 2^n]+1; ~1\leq i\leq m.$$
{\it Substep 11.2}\\
$$t_{i, (p_1,\cdots, p_l)}:=[((f_i)_{\gamma, (p_1,\cdots, p_l)}-M_1)\times 2^n]+1; ~1\leq i\leq m.$$ 
{\it Substep 11.3}\\
\begin{align*}
%\begin{split}
&K_n^{\gamma} \left(x_1^{\gamma},x_2^{\gamma}, \cdots , x_{(M_2-M_1) \times 
 2^n}^{\gamma} , p_1,\cdots , p_l \right) :=\\
&\sum_{i=1}^{m}\Big\vert \min \lbrace (M_1 + \dfrac{s}{2^n}-( g_i )_{\gamma ,(p_1,\cdots , 
p_l)})^2+x_s^{\gamma} -(( f_i )_{\gamma , (p_1,\cdots , p_l)}-( g_i )_{\gamma 
,(p_1,\cdots , p_l)})^2 -x_{t_{i,(p_1,\cdots , p_l)}}^{\gamma} \big | \\
&~~~~~~1\leq s \leq {s_{i, (p_1,\cdots , p_l)}}-1 \rbrace \cup \lbrace 
x_{s_{i ,(p_1,\cdots , p_l)}}^{\gamma}-(( f_i )_{\gamma ,(p_1,\cdots , p_l)}-( g_i 
)_{\gamma ,(p_1,\cdots , p_l)})^2 -x_{t_{i ,(p_1,\cdots , p_l)}}^{\gamma} \rbrace \\
&~~~~~~\cup \lbrace (M_1 + \dfrac{s-1}{2^n}+\epsilon -( g_i )_{\gamma ,(p_1,\cdots 
, p_l)})^2 +x_s^{\gamma} -(( f_i )_{\gamma ,(p_1,\cdots , p_l)}-( g_i 
)_{\gamma 
,(p_1,\cdots , p_l)})^2 -x_{t_{i ,(p_1,\cdots , p_l)}}^{\gamma}\big |\\
&~~~~~~s_{i, (p_1,\cdots , p_l)}+1\leq s \leq (M_2-M_1) \times 2^n \rbrace 
\Big\vert^2.\\
%\end{split}
\end{align*}
\paragraph{Step 12:}
determine a solution of the following system and call it\\ 
$(\mathcal{C}_1^{\gamma_1},\cdots , \mathcal{C}_{(M_2-M_1)\times 2^n}^{\gamma_1}, \cdots, \mathcal{C}_1^{\gamma_{\vert \Gamma \vert}},\cdots , \mathcal{C}_{(M_2-M_1)\times 2^n}^{\gamma_{\vert \Gamma \vert}}, p_1^\circ,\cdots, p_l^\circ)$:
$$\min ~\sum_{\gamma \in \Gamma} K_n^\gamma(x_1^\gamma,\cdots ,x_{(M_2-M_1)\times 2^n}^\gamma, p_1,\cdots, p_l)$$
%%%%%%%%%%%%%%%%%\
such that:
\begin{align*}
& |x_r^\gamma-x_t^\gamma|+x_r^\gamma+x_t^\gamma-2x_s^\gamma\geq 0
\quad \text{for} \quad (r,s,t)\in\{(r,s,t)|~r,s,t\in S_n,~r<s<t\},~\gamma \in \Gamma,\\
&
1/2\sum_{i=1}^l p_i=1,\\
&
\vert 1/2 \sum_{i=1}^l p_i z^{k_i} \vert^2+\vert 1/2 \sum_{i=1}^l p_i (-z)^{k_i} \vert^2=1~~for~~\vert z \vert=1,\\
&\vert 1/2 \sum_{i=1}^l p_i e^{itk_i} \vert >0~~for~~\vert t \vert\leq \pi/2.
\end{align*}

\paragraph{Step 13:}
$$\psi_\gamma(x):=\sum_{t=1}^{(M_2-M_1)\times 2^n}
\mathcal{C}_t^\gamma\chi_{A_t}(x);~\gamma \in \Gamma.$$
\paragraph{Step 14:}
$$\psi(x):=\sum_{\gamma \in \Gamma} \psi_{\gamma}(x).$$
\paragraph{Step 15:}
give $\psi$ and $ (p_1^\circ,\cdots, p_l^\circ) $ as output.
\begin{itemize}
\item[ ]
{{\small\textit{
``$\psi$ is the optimal regularization and $ p_1^\circ,\cdots, p_l^\circ $ are scaling coefficients corresponding to the optimal multi-resolution analysis."
}}}
\end{itemize}
\subsection{An algorithm for finding the optimal regularization by the approach presented in Section 4}

In this subsection, we present the steps of an algorithm that is associated with Theorem 4.3. Subsequently, the changes that are needed for having a better result regarding Remark 4.2 will be provided. Finally, because this algorithm likely needs a large volume of computational tasks, we propose an idea of creating a simple training set for testing the algorithm. 

%%%%%%%%%%%%%%%%%%%%%%%%%%%
\paragraph{Step 1.}
get $n,m,\epsilon$ from input. 
\begin{itemize}
\item[ ]
{\small{\textit{
``$\epsilon$ should be the minimum value that can be 
defined for the computer."}}}
\end{itemize}
%%%%%%%%%%\
\paragraph{Step 2.}
get $(f_i,g_i)$ for $i=1,\cdots , m$ from input.
\begin{itemize}
\item[ ]
{\small{\textit{
``$g_i$'s are noisy signals and $f_i$'s are the original signals i.e. 
$g_i=f_i+e_i$."
}}}
\end{itemize}
%%%%%%%%%%%%%
%%%%%%%%%%\
\paragraph{Step 3.}
get $g$ from input.
\begin{itemize}
\item[ ]
{\small{\textit{
``$g$ is the noisy signal that its noise is supposed to be removed by the model 
that will be obtained from the training set $\{(f_i,g_i)|~i=1,\cdots , m\}$."
}}}
\end{itemize}
%%%%%%%%%%%%%
%%%%%%%%%%
\paragraph{Step 4:}
get the scaling function $\phi$ from input.
\begin{itemize}
\item[ ]
{\small{\textit{
``By the scaling function $\phi$, the associated multi-resolution analysis $\lbrace V_j;~ j\in\mathbb{Z} \rbrace$ is clarified:
$$V_j=\langle \lbrace 2^{j/2}\phi(2^jx-k); ~k\in\mathbb{Z} \rbrace \rangle.$$
However, the exact definition of it is not needed, because only the associated scaling coefficients are used in the decomposition algorithm and reconstruction algorithm."
}}}
\end{itemize}
\paragraph{Step 5:}
get $j_\circ$ from input.
\begin{itemize}
\item[ ]
{\small{\textit{
``$V_{j_\circ}$ is the space on which $ f_i $'s, $ g_i $'s and $ g $ are supposed to be projected."
}}}
\end{itemize}
\paragraph{Step 6:}
\begin{align*}
&a_{k, i}:=\int_{-\infty}^{+\infty} f_i(x)\overline{\phi(2^{j_\circ}x-k)} dx;\quad k\in {\mathbb{Z}},~~i=1,\cdots,m.\\
&
b_{k, i}:=\int_{-\infty}^{+\infty} g_i(x)\overline{\phi(2^{j_\circ}x-k)} dx;\quad k\in {\mathbb{Z}},~~i=1,\cdots,m.\\
&
b_{k}:=\int_{-\infty}^{+\infty} g(x)\overline{\phi(2^{j_\circ}x-k)} dx;\quad k\in {\mathbb{Z}}.\\
\end{align*}

\begin{itemize}
\item[ ]
{\small{\textit{
``Note that we have:
\begin{align*}
&P_{j_\circ}f_i=\sum_{k\in {\mathbb{Z}}} a_{k, i}\phi(2^{j_\circ}x-k) ;\quad i=1,\cdots,m,\\
&
P_{j_\circ}g_i=\sum_{k\in {\mathbb{Z}}} b_{k, i}\phi(2^{j_\circ}x-k) ;\quad i=1,\cdots,m,\\
&
P_{j_\circ}g=\sum_{k\in {\mathbb{Z}}} b_{k}\phi(2^{j_\circ}x-k) 
\end{align*}
where $P_{j_\circ}f_i$, $P_{j_\circ}g_i$ and $P_{j_\circ}g$ are the orthogonal projections of $f_i$, $g_i$ and $g$ onto $V_{j_\circ}$."
}}}
\end{itemize}

\paragraph{Step 7:}
determine sets $\lbrace(f_i)_\gamma\vert~\gamma \in {\Gamma}\rbrace$, $\lbrace(g_i)_\gamma \vert ~ \gamma \in {\Gamma}\rbrace$, $\lbrace g_\gamma \vert ~ \gamma \in {\Gamma}\rbrace$ by decomposing $(a_{k, i})_{k \in \mathbb{Z}}$, $(b_{k, i})_{k \in \mathbb{Z}}$, $(b_{i})_{k \in \mathbb{Z}}$ respectively by the iterative process for $i=1,\cdots,m$.
\begin{itemize}
\item[ ]
{\small{\textit{
``This step is the decomposition algorithm by using the scaling coefficients."
}}}
\end{itemize}

\paragraph{Step 8:}
determine integers $M_1$ and $M_2$ such that $\forall \gamma \in \Gamma,~
\forall i; ~ i=1,\cdots, m:$
$$(f_i)_\gamma, (g_i)_\gamma \in (M_1,M_2].$$
%%%%%%%%%%%%%%%%%%%%%
\paragraph{Step 9:}
$S_n:=\{1,\cdots, (M_2-M_1)\times 2^n\}$.\\
%%%%%%%%%%%
\paragraph{Step 10:}
$A_t:=(M_1+\frac{t-1}{2^n},M_1+\frac{t}{2^n}]; ~1\leq t\leq (M_2-M_1)\times 
2^n$.\\
%%%%%%%%%%%
\paragraph{Step 11:}
Perform subsetps 11.1 to 11.8 for every $\gamma\in \Gamma$: \\
\\
\\
{\it Substep 11.1}\\ 
$$s_i:=[((g_i)_\gamma-M_1)\times 2^n]+1; ~1\leq i\leq m.$$
{\it Substep 11.2} 
\begin{align*}
W:=\Big \{&(r_1,\cdots , r_m)\in 
\overbrace{S_n\times \cdots \times S_n}^{m}|\\
&\exists (x_1,\cdots , x_{(M_2-M_1)\times 2^n})
\in \mathbb{R}^{(M_2-M_1)\times 2^n};\\
\\
& 
\forall i\in \{1,\cdots ,m\}, ~\forall s\in\{1,\cdots, (M_2-M_1)\times 2^n\}
\backslash \{r_i\}:\\
& \Biggl( \left[ 
M_1+\frac{2r_i+sign(s_i-r_i)-1}{2^{n+1}}+
(\frac{1-sign(s_i-r_i)}{2})\epsilon\right]\\
& (1-\chi_{\{s_i\}}(r_i)) + 
(g_i)_\gamma(\chi_{\{s_i\}}(r_i))-(g_i)_\gamma\Biggr)^2+x_{r_i}\\
& \leq 
\Biggl(\left[M_1+\frac{2s+sign(s_i-s)-1}{2^{n+1}}+
(\frac{1-sign(s_i-s)}{2})\epsilon\right]\\
& (1-\chi_{\{s_i\}}(s))+(g_i)_\gamma
(\chi_{\{s_i\}}(s))-(g_i)_\gamma\Biggr)^2+x_s,\\
\\
& \forall (r,s,t)\in\{(r,s,t)| ~r,s,t\in S_n,~ r<s<t\}:\\
&|x_r-x_t|+x_r+x_t-2x_s\geq 0 \Big \}.
\end{align*}\\
{\it Substep 11.3}
\begin{align*}
L(r_1,\cdots,r_m):= \sum_{i=1}^m &|(f_i)_\gamma -
\Biggl(
[M_1+\frac{2r_i+sign(s_i-r_i)-1}{2^{n+1}}+
(\frac{1-sign(s_i-r_i)}{2})\epsilon]\\
& (1-\chi_{\{s_i\}}(r_i)) + 
(g_i)_\gamma(\chi_{\{s_i\}}(r_i))\Biggr)|^2.
\end{align*}\\
%%%%%%%%%\
{{\it Subsetp 11.4}}~~\\
$$(r_1^\gamma, \cdots , r_m^\gamma):=\text{arg-min}_{(r_1,\cdots , r_m)\in W}
L(r_1,\cdots , r_m).$$\\
%%%%%%%%%5
{{\it Subsetp 11.5}}~~
determine a solution of the following system and call it 
$(\mathcal{C}_1,\cdots , \mathcal{C}_{(M_2-M_1)\times 2^n})$:
\begin{equation*}
\begin{cases}
\Biggl( \left[ 
M_1+\frac{2r_i^\gamma+sign(s_i-r_i^\gamma)-1}{2^{n+1}}+
(\frac{1-sign(s_i-r_i^\gamma)}{2})\epsilon\right]\\
(1-\chi_{\{s_i\}}(r_i^\gamma))
+ 
(g_i)_\gamma(\chi_{\{s_i\}}(r_i^\gamma))-(g_i)_\gamma\Biggr)^2\\
+x_{r_i^\gamma}\leq 
\Biggl(\left[M_1+\frac{2s+sign(s_i-s)-1}{2^{n+1}}+
(\frac{1-sign(s_i-s)}{2})\epsilon\right]\\
 (1-\chi_{\{s_i\}}(s)
)+(g_i)_\gamma(\chi_{\{s_i\}}(s))-(g_i)_\gamma\Biggr)^2+x_s\\
 \text{for}~ i\in \{1,\cdots ,m\},~ s\in 
\{1,\cdots ,(M_2-M_1)\times 2^n\}\backslash \{r_i^\gamma\},\\
\\
|x_r-x_t|+x_r+x_t-2x_s\geq 0\\
 \text{for}~ (r,s,t)\in\{(r,s,t)|~r,s,t\in S_n,~ r<s<t\}.
\end{cases}
\end{equation*}\\

%%%%%%%%%%%%%%%%%%55
\noindent
{\it Substep 11.6}\\
$$\psi(x):=\sum_{t=1}^{(M_2-M_1)\times 2^n}
\mathcal{C}_t\chi_{A_t}(x).$$
{\it Substep 11.7}\\
$$\Phi(x)=(x-g_\gamma)^2+\psi(x).$$\\
\\
{\it Substep 11.8}\\ 
$$f_\gamma:=\text{arg-min}_{x\in (M_1,M_2]}\Phi(x).$$\\
%%%%%%%
\paragraph{Step 12:}
determine $ (a_k)_{k \in \mathbb{Z}} $ from $ \lbrace f_{\gamma}\vert~\gamma \in \Gamma\rbrace $ by the reconstruction algorithm. 
\begin{itemize}
\item[ ]
{{\small\textit{
``This step is the reconstruction algorithm by using the scaling coefficients."
}}}
\end{itemize}
\paragraph{Step 13:}
\begin{align*}
&f:=\sum_{k\in {\mathbb{Z}}} a_{k}\phi(2^{j_\circ}x-k). 
\end{align*}
%%%%%%%%%%%%%%%%

\paragraph{Step 14:}
give $f$ as output.
\begin{itemize}
\item[ ]
{{\small\textit{
``$f$ is a modified signal for the signal $g$ that was taken at the step 3."
}}}
\end{itemize}

Considering Remark 4.2, for a better result, the substep 11.5 should be replaced by the following substeps:\\ 
\\
{\it Substep (i)}

$$t_i:=[((f_i)_\gamma-M_1)\times 2^n]+1; ~1\leq i\leq m.$$
{\it Subsetp (ii)}

\begin{align*}
%\begin{split}
K_n \left(x_1,x_2, \cdots , x_{(M_2-M_1) \times 2^n} \right) 
:=\sum_{i=1}^m & \Big\vert \min \lbrace (M_1 + \dfrac{s}{2^n}-( g_i )_{\gamma})^2+x_s -(( f_i 
)_{\gamma}-( g_i )_{\gamma})^2 -x_{t_i} \big | \\
&~1\leq s \leq {s_i}-1 \rbrace \cup \lbrace x_{s_i}-(( f_i 
)_{\gamma}-( g_i )_{\gamma})^2 -x_{t_i} \rbrace \\
&\cup \lbrace (M_1 + \dfrac{s-1}{2^n}+\epsilon -( g_i )_{\gamma})^2 +x_s -(( 
f_i )_{\gamma}-( g_i )_{\gamma})^2 -x_{t_i}\big |\\
&{s_i}+1\leq s \leq (M_2-M_1) \times 2^n \rbrace \Big\vert^2.\\
%\end{split}
\end{align*}
{{\it Subsetp (iii)}}~~
determine a solution of the following system and call it 
$(\mathcal{C}_1,\cdots , \mathcal{C}_{(M_2-M_1)\times 2^n})$:
$$\min ~ K_n(x_1,\cdots ,x_{(M_2-M_1)\times 2^n})$$
%%%%%%%%%%%%%%%%%\
such that:
\begin{align*}
&\Biggl( \left[ 
M_1+\frac{2r_i^\gamma+sign(s_i-r_i^\gamma)-1}{2^{n+1}}+
(\frac{1-sign(s_i-r_i^\gamma)}{2})\epsilon\right]\\
&(1-\chi_{\{s_i\}}(r_i^\gamma))
+ 
(g_i)_\gamma(\chi_{\{s_i\}}(r_i^\gamma))-(g_i)_\gamma\Biggr)^2\\
&+x_{r_i^\gamma}\leq 
\Biggl(\left[M_1+\frac{2s+sign(s_i-s)-1}{2^{n+1}}+
(\frac{1-sign(s_i-s)}{2})\epsilon\right]\\
& (1-\chi_{\{s_i\}}(s)
)+(g_i)_\gamma(\chi_{\{s_i\}}(s))-(g_i)_\gamma\Biggr)^2+x_s\\
& \text{for}~ i\in \{1,\cdots ,m\},~ s\in 
\{1,\cdots ,(M_2-M_1)\times 2^n\}\backslash \{r_i^\gamma\},\\
\\
& |x_r-x_t|+x_r+x_t-2x_s\geq 0\\
&\text{for}~ (r,s,t)\in\{(r,s,t)|~r,s,t\in S_n, ~r<s<t\}.\\
\end{align*}

At the end of this section, we suggest creating a simple training set that can be written with a simple computer program in order to have less computation volume for testing the above algorithm: Suppose $f_i$'s are images containing such two (medium size) squares with uniform level of darkness that their sizes and positions in the images are randomly selected and $g_i$'s are images obtained by adding many tiny squares (that both their number and locations in the image are randomly selected) to these original images i.e. $f_i$'s. Here the tiny squares play the role of noises for the original images. Once the denoising model is created, we can evaluate the ability of the algorithm by creating another simple original image $f$, and an image $g$ that is the result of adding the randomly selected noises to $f$ as explained.  The difference between the denoised version of $g$ that comes out of the algorithm and $f$ could show the algorithm's performance. It is natural to expect this gap to decrease as the members' number of the training set increases. In fact, not only can creating a simple training set reduce the computation volume for testing the algorithm because of the simplicity of the images' structure but also it has the advantage that we can increase the number of training sets' elements as many as needed and easily test the algorithm with different training sets sizes. \\
\\
By considering different levels of darkness for tiny squares and limiting their distributions, we can simulate different noise models such as Gaussian noise, salt-pepper noise, and so on. Having a different training set could be utilized in testing the approach presented in Remark 4.5 – the approach by the capability to remove different noises. We mention that by algorithms presented in this subsection the algorithm for the approach in Remark 4.5 is achieved merely by little changes and adding some more details; therefore, it is omitted. 

%%%%%%%%%%%%%%%%%%%%%%%%%%%%%%
%%%%%%%%%%%%%%%%%%%%%%%%%%%%%%%%%%%%%%%%%%%%%%%%
\section{Signal processing by training ``linear inverse problems with multi-constraints"}
This section addresses the bi-level optimization problem (1.7) by considering $\mathbf{Z}=B_{W,P}$, $B^{\prime}_{W,P}$ or $B^{\prime\prime}_{W,p}$, where $B_{W,P}$ was introduced in Example 1.2 while $B^{\prime}_{W,P}$ and $B^{\prime\prime}_{W,p}$ will be later defined here. It will be discussed that in the applications that time for computational tasks is limited, considering these sets in the learning method will be beneficial. Moreover, in this section, we will present an iterative method for solving linear inverse problems with a constraint $\psi$ from the set $B^{\prime}_{W,P}$. As it will be explained, this iterative method will make both computational and theoretical views more clear.

However, first, we are going to explain the logic of how reached to the bi-level optimization problem (1.5). Assume that $p_i>1$ for $i=1,\dots,n$. We like $\lambda_1,\dots,\lambda_n$ to be in such a way that:  
$$f_i=\text{arg-min}_{f\in 
\mathcal{H}}\|f-g_i\|^2+
\sum_{j=1}^n\lambda_j|||f|||_{W_j,p_j}^{p_j};~~~~1 \leq i \leq m,$$ 
where $(f_i, g_i);~1 \leq i \leq m$ is the training set. By Lemma 1.1, we have: 
$$\sum_{\gamma\in\Gamma}F_{(\lambda_1,\cdots,\lambda_n)}^{-1}((g_i)_\gamma)\varphi_\gamma=f_i;~~~~1 \leq i \leq m,$$ 
where 
$$F_{(\lambda_1,\cdots,\lambda_n)}(x)=x+Sign(x)\frac{\sum_{j=1}^n p_j\lambda_jw_{j,\gamma}|x|^{p_j-1}}{2},$$ 
or equivalently 
$$F_{(\lambda_1,\cdots,\lambda_n)}^{-1}((g_i)_\gamma)=(f_i)_\gamma;~~1 \leq i \leq m,~~\gamma \in \Gamma$$ 
or 
$$F_{(\lambda_1,\cdots,\lambda_n)}((f_i)_\gamma)=(g_i)_\gamma;~~1 \leq i \leq m,~~\gamma \in \Gamma,$$ 
so 
$$(f_i)_\gamma+Sign((f_i)_\gamma)\frac{\sum_{j=1}^n p_j\lambda_jw_{j,\gamma}|(f_i)_\gamma|^{p_j-1}}{2}=(g_i)_\gamma;~~1 \leq i \leq m,~~\gamma \in \Gamma.$$ 
Now, we suppose $\Gamma$ is a finite set $(\Gamma=\lbrace\gamma_1,\dots,\gamma_t\rbrace)$ and pick $n$ in the way that $n \geqslant \vert \Gamma \vert \times m$. Then for achieving $\lambda_i$’s, we should solve the following system of $\vert \Gamma \vert \times m$ linear equations in $n$ unknowns: 
$$\begin{cases}
2(f_1)_{\gamma_1}+Sign((f_1)_{\gamma_1})\sum_{j=1}^n p_j\lambda_jw_{j,\gamma_1}|(f_1)_{\gamma_1}|^{p_j-1}=2(g_1)_{\gamma_1}\\
\\
2(f_1)_{\gamma_2}+Sign((f_1)_{\gamma_2})\sum_{j=1}^n p_j\lambda_jw_{j,\gamma_2}|{(f_1)_{\gamma_2}|}^{p_j-1}=2(g_1)_{\gamma_2}\\
\\
\vdots~~~~~~~~~~~~~~~~\vdots~~~~~~~~~~~~~~~~~~~\vdots~~~~~~~~~~~~~~~~~~~~~~~~~~~~\vdots \\
\\
2(f_m)_{\gamma_t}+Sign((f_m)_{\gamma_t})\sum_{j=1}^n p_j\lambda_jw_{j,\gamma_t}|(f_m)_{\gamma_t}|^{p_j-1}=2(g_m)_{\gamma_t}
\end{cases}$$
Even if $\Gamma$ be a single-member set, the system has solutions only under limited conditions on the training set, namely: 
$$\text{if}~~(f_{i_{k_1}})_\gamma<\dots<(f_{i_{k_m}})_\gamma~~\Rightarrow~~(g_{i_{k_1}})_\gamma<\dots<(g_{i_{k_m}})_\gamma$$ 
because $\sum_{j=1}^n \lambda_jw_{j,\gamma}|x|^{p_j}$ is convex. Therefore, we turn to the “loss function” method, namely the bilevel optimization problem (1.5) which was mentioned in the Introduction. 
Moreover, note that solving the bilevel optimization problem (1.5) is less challenging when $p_i>1~ \text{for} ~i=1,...,n$, because with these conditions the following equality will be established:
$$\sum_{\gamma\in\Gamma}S_ {(\lambda_1w_{1,\gamma},\cdots , \lambda_nw_{n,\gamma}),(p_1,\cdots 
p_n)}((g_i)_\gamma)\varphi_\gamma=\text{arg-min}_{f\in 
\mathcal{H}}\|f-g_i\|^2+
\sum_{j=1}^n\lambda_j|||f|||_{W_j,p_j}^{p_j}$$
where 
\begin{align*}
&S_ {(\lambda_1w_{1,\gamma},\cdots , \lambda_nw_{n,\gamma}),(p_1,\cdots 
p_n)}(x)=F^{-1}_{(\lambda_1,\cdots,\lambda_n)}(x),\\
\\
&F_{(\lambda_1,\cdots,\lambda_n)}(x)=x+Sign(x)\frac{\sum_{i=1}^n p_i\lambda_iw_{i,\gamma}|x|^{p_i-1}}{2}.
\end{align*}
Consequently
\begin{align*}
\mathfrak{I}(\lambda_1,\cdots,\lambda_n)&=\sum_{i=1}^m\|\tilde{f}_{i,(\lambda_1,\cdots,
\lambda_n)}-f_i\|^2\\
&=\sum_{i=1}^m\|\sum_{\gamma\in\Gamma}F_{(\lambda_1,\cdots,\lambda_n)}^{-1}((g_i)_\gamma)\varphi_\gamma-f_i\|^2\\
&=\sum_{i=1}^m\|\sum_{\gamma\in\Gamma}(F_{(\lambda_1,\cdots,\lambda_n)}^{-1}((g_i)_\gamma)-(f_i)_\gamma)\varphi_\gamma\|^2\\
&=\sum_{i=1}^m\sum_{\gamma\in\Gamma}|F_{(\lambda_1,\cdots,\lambda_n)}^{-1}((g_i)_\gamma)-(f_i)_\gamma|^2\\
\end{align*}
which is a differentiable function from $\mathbb{R}^n$ to $\mathbb{R}$. By a similar process, it can also be presented that the bilevel optimization problem (2.10) will have a simpler solution, however somehow challenging, provided that: (1) $B_{W,P}$ is replaced with \textbf{C}  and (2)  \textbf{U} is a set of all linear combinations of a finite number of linear operators $\lbrace K_1, K_2,\cdots, K_l \rbrace$ as follows:\\
$$ \textbf{U}=\lbrace \alpha_1 K_1 +\cdots+ \alpha_l K_l\vert~\alpha_1,\cdots,\alpha_l \in \mathbb{R} \rbrace $$
By these assumptions, the minimizer of the following function from $\mathbb{R}^{n+l}$ to $\mathbb{R}$ should be achieved: \\
\begin{align*}
&\mathfrak{I}:\mathbb{R}^{n+l}\longrightarrow \mathbb{R}\\
& 
\mathfrak{I}(\lambda_1,\cdots,\lambda_n, \alpha_1,\cdots,\alpha_l )=\sum_{i=1}^m\|\tilde{f}_{i,(\lambda_1,\cdots,
\lambda_n), (\alpha_1,\cdots,\alpha_l)}-f_i\|^2\\
&
;\tilde{f}_{i,(\lambda_1,\cdots ,\lambda_n), (\alpha_1,\cdots,\alpha_l)}=\text{arg-min}_{f\in 
\mathcal{H}}\|(\alpha_1 K_1 +\cdots+ \alpha_l K_l)(f)-g_i\|^2+
\sum_{j=1}^n\lambda_j|||f|||_{W_j,p_j}^{p_j}.
\end{align*}
However, there are some challenges in the above problem, one of which is obtaining $ \tilde{f}_{i,(\lambda_1,\cdots ,\lambda_n), (\alpha_1,\cdots,\alpha_l)} $. Since $K$ is in its general form (not merely $K=I$), an iterative process is needed for reaching $ \tilde{f}_{i,(\lambda_1,\cdots ,\lambda_n), (\alpha_1,\cdots,\alpha_l)} $. To clarify how this situation can be dealt with, the following simplified problem will be investigated:\\
\begin{align*}
&\mathfrak{I}:\mathbb{R}\longrightarrow \mathbb{R}\\
& \mathfrak{I}(\lambda)=
\sum_{i=1}^m\|\tilde{f}_{i,\lambda}-f_i\|^2\\
&
;\tilde{f}_{i,\lambda}=\text{arg-min}_{f\in 
 \mathcal{H}}\|Kf-g_i\|^2+\lambda|||f|||_{W, p}^{p}
\tag*{$(6\cdot 1)$}
\end{align*}
where $ |||f|||_{W, p}^{p} $ defined in (1.3). First, we define the function $ \textbf{T}_{i, \lambda} $ as follows:
$$ \textbf{T}_{i, \lambda}=\textbf{S}_{\lambda W, p}(h)=\sum_{\gamma \in \Gamma}S_{\lambda w_{\gamma}, p}(h_{\gamma}+[K^*(g_i-K(h))]_{\gamma})\varphi_{\gamma}$$ 
where $ S_{\lambda w_{\gamma}, p} $ for $ \gamma \in \Gamma $ is a function from $\mathbb{R}$ to itself which will be later defined in Lemma 6.1. As shown in [17], we have 
$$ \Vert \textbf{T}_{i, \lambda}^N(h_\circ)-\tilde{f}_{i,\lambda}\Vert \longrightarrow 0~~when~~N\longrightarrow 0$$
where $ h_\circ \in \mathcal{H} $ is arbitrarily selected. Consequently, $ \textbf{T}_{i, \lambda}^{N_\circ}(h_\circ) $ will be a highly accurate approximation for $ \tilde{f}_{i,\lambda} $ providing that $ N_\circ $ is large enough. Without loss of generality, we suppose $ N_\circ=2 $ i.e. $ \tilde{f}_{i,\lambda}\simeq \textbf{T}_{i, \lambda}^{2}(h_\circ) $. Then, by assuming $ h_{1,\lambda}^i=\textbf{T}_{i, \lambda}(h_\circ) $ for $ i=1,\cdots,m $, we have
\begin{align*}
\mathfrak{I}(\lambda)&=\sum_{i=1}^m\|\textbf{T}_{i, \lambda}(h_{1,\lambda}^i)-f_i\|^2\\
&=\sum_{i=1}^m\|\textbf{S}_{\lambda W, p}(h_{1,\lambda}^i)-f_i\|^2\\
&=\sum_{i=1}^m\|\sum_{\gamma\in\Gamma}S_{\lambda w_{\gamma}, p}((h_{1,\lambda}^i)_{\gamma}+[K^*(g_i-K(h_{1,\lambda}^i))]_{\gamma})\varphi_{\gamma}-f_i\|^2\\
&=\sum_{i=1}^m\|\sum_{\gamma\in\Gamma}(S_{\lambda w_{\gamma}, p}((h_{1,\lambda}^i)_{\gamma}+[K^*(g_i-K(h_{1,\lambda}^i))]_{\gamma})-(f_i)_\gamma)\varphi_\gamma\|^2\\
&=\sum_{i=1}^m\sum_{\gamma\in\Gamma}|S_{\lambda w_{\gamma}, p}((h_{1,\lambda}^i)_{\gamma}+[K^*(g_i-K(h_{1,\lambda}^i))]_{\gamma})-(f_i)_\gamma|^2.\\
\end{align*}
Now, by defining $\mathfrak{I}_{\gamma, i}$ as follows:
$$ \mathfrak{I}_{\gamma, i}(\lambda)=|S_{\lambda w_{\gamma}, p}((h_{1,\lambda}^i)_{\gamma}+[K^*(g_i-K(h_{1,\lambda}^i))]_{\gamma})-(f_i)_\gamma|^2 $$
we will have 
$$ \frac {\partial \mathfrak{I}}{\partial \lambda}=\sum_{i=1}^m\sum_{\gamma\in\Gamma} \frac {\partial {\mathfrak{I}_{\gamma, i}}}{\partial \lambda}. $$
In fact, the challenge converted to finding the derivative of $\mathfrak{I}_{\gamma, i}$ for $\gamma \in \Gamma$ and $i=1,\cdots,m$, which are differentiable functions from $\mathbb{R}$ to $\mathbb{R}$.\\

The problem (6.1) introduced above can be obtained by replacing the set \textbf{C} in (2.9) with the set $B^{\prime\prime}_{W,p}$ defined as follows: \\
$$B^{\prime\prime}_{W,p}=\{\psi:\mathcal{H}\longrightarrow
\mathbb{R}|~\psi(f)=\lambda|||f|||_{W,p}^{p}; ~\lambda\in \mathbb{R}\}$$
where $
|||f|||_{W,p}=\Biggl(\sum_{\gamma\in\Gamma} w_\gamma|\langle 
f,\varphi_\gamma\rangle|^p\Biggr)^{\frac{1}{p}}$
for $1\leq p\leq 2$, and $W=(w_\gamma)_{\gamma\in 
\Gamma}$ is a sequence of strictly positive weights.\\ 
The set $B^{\prime}_{W,P}$ consists of all constraints named ``mixed multi-constraints" in [30] is another alternative for \textbf{Z} in (1.7) and \textbf{C} in (2.9) and (2.10): \\
$$B^{\prime}_{W,P}=\{\psi:\mathcal{H}\longrightarrow
\mathbb{R}|~\psi(f)=\sum_{i=1}^n\lambda_i|\!|\!|f|\!|\!|^{p_i}_{W_i,p_i}; ~\lambda_i\in \mathbb{R}^+ ~ \text{for}~ i\in \{1,\cdots , n\}\}$$
where the sequence $W=(w_\ga)_{\ga\in \Ga}$ is uniformly bounded below away 
from zero, i.e. there exists a constant $c>0$ such that $\forall \ga\in\Ga;~ 
w_\ga\geq c$,
$\Ga=\Ga_1\cup \Ga_2\cup \dots\cup \Ga_{n}, W_i =\{w_\ga\}_{\ga\in\Ga_i},$ 
$P=\{p_1,p_2,\dots,p_n\}$ and  
$|\!|\!|f|\!|\!|^{p_i}_{W_i,p_i}=\sum_{\ga\in\Ga_i} w_\ga|f_\ga|^{p_i}$ for every $i; ~1\leq i\leq n$.

Besides, the following inclusion relations among $B_{W,P}$, $B^{\prime}_{W,P}$, $B^{\prime\prime}_{W,p}$ and \textbf{C} are obvious: 
$$B^{\prime\prime}_{W,p}\subseteq B_{W,P} \subseteq \textbf{C},$$
$$B^{\prime\prime}_{W,P}\subseteq B^{\prime}_{W,P} \subseteq \textbf{C}.$$

As already discussed in the introduction, the larger \textbf{Z} in (1.7), the more appropriate regularization term. However, utilizing the bilevel optimization problem (1.7) by considering $\textbf{Z}=B_{W,P}, B^{\prime}_{W,P} ~\text{or}~B^{\prime\prime}_{W,p}$ instead of $\textbf{Z}=\textbf{C}$ is beneficial in applications where getting the model faster is indispensable or where a very powerful processor (a processor powerful enough for obtaining a $\psi$ from \textbf{C}) is not available. The reason is, the elements of $B_{W,P}, B^{\prime}_{W,P} ~\text{or}~B^{\prime\prime}_{W,p}$  are linear combinations of norms that each of them is an approximation for a suitable smoothness space norm (such as Besov or Sobolev space) and consequently finding merely parameter $\lambda_i$'s is needed in the learning process which requires less computation compared with extremely high computational volume - in addition to having a great data set - for achieving a function $\psi$ from \textbf{C}. For instance, in cell phone application which will be addressed in Section 7, the learning process for reaching the model should be done - in a very short period before the conversation begins - by a cell phone processor that it may not be powerful enough to handle the difficult task of obtaining a sufficiently detailed $\psi$ from \textbf{C}. However, in satellite image processing's application in Section 7 utilizing \textbf{C} instead of $B_{W,P}, B^{\prime}_{W,P} ~\text{or}~B^{\prime\prime}_{W,p}$ is preferred, because once the image has reached the earth, image processing can be done at the right and with sufficient time with a powerful processor.

For completing the discussion of time or volume of computational tasks, in addition to the above discussion that was around the time it takes to obtain a regularization term $\psi$, the time required to solve the minimization problem (1.6) by considering the different regularization term from different sets ($B_{W,P}$, $B^{\prime}_{W,P}$, $B^{\prime\prime}_{W,p}~\text{and}~\textbf{C}$) should be compared. Namely, the time required to solve the following minimization problems should be compared: 
\begin{align*}
\tag*{$(6\cdot 2)$}
&\text{inf}_{f 
 \in \mathcal{H}} 
\Vert K(f)- g \Vert ^2 +\psi(f);~~\psi \in B^{\prime\prime}_{W,p},\\
\tag*{$(6\cdot 3)$}
&\text{inf}_{f 
 \in \mathcal{H}} 
\Vert K(f)- g \Vert ^2 +\psi(f);~~\psi \in B^{\prime}_{W,P},\\
\tag*{$(6\cdot 4)$}
&
\text{inf}_{f 
 \in \mathcal{H}} 
\Vert K(f)- g \Vert ^2 +\psi(f);~~\psi \in B_{W,P},\\
\tag*{$(6\cdot 5)$}
&
\text{inf}_{f 
 \in \mathcal{H}} 
\Vert K(f)- g \Vert ^2 +\psi(f);~~\psi \in \textbf{C}.
\end{align*}
As already mentioned the time needed for achieving a $\psi$ from $B_{W,P}, B^{\prime}_{W,P} ~\text{or}~B^{\prime\prime}_{W,p}$ is much less than the time for providing a $\psi$ from \textbf{C}. However, analyzing the amount of computational task required for the minimization problems corresponding to $B_{W,P}, B^{\prime}_{W,P} ~\text{and}~B^{\prime\prime}_{W,p}$ - namely the problems (6.2), (6.3) and (6.4) - is beneficial for applications in which fast computing is crucial. Therefore, we dedicate the rest of this section to present an independent solution for the problem (6.3), because it will make it clear that the volume of computing tasks needed for solving (6.2) and (6.3) are close to each other and far less than the amount required for (6.4). 

Since $B^{\prime\prime}_{W,p}\subseteq B_{W,P} $ and $B^{\prime\prime}_{W,P}\subseteq B^{\prime}_{W,P}$, the solution of problem (6.2) can be concluded from the solutions of both problems (6.3), (6.4) and consequently the iterative method for reaching to the minimizer will have less computational tasks than both (6.3), (6.4). Now the question that how much the difference is between the above-mentioned problems should be taken into consideration. Comparing the iterative processes for solving (6.2) and (6.4) in [17] and [30] respectively reveals that the distance between volumes of their computational tasks is significant. In the solution of (6.2) in [17], the function $ S_{ w_{\gamma}, p} $ in Lemma 6.1 repeats in all iteration steps, while for (6.4) the function $S_ {(w_{1,\gamma},\cdots ,w_{n,\gamma}),(p_1,\cdots 
 p_n)}$ in Lemma 2.1 in [30] appears. Since the computational complexity of finding the minimizer of the latter function is far more than the former one, the volume of computation for solving (6.4) will be far more than (6.2). However, it is expected that the volume of the computational tasks for solving (6.2) and (6.3) will be close to each other because by providing an independent iterative method for solving (6.3) in the following, we will show the function $ S_{ w_{\gamma}, p} $ in Lemma 6.1 has a key role in iterative process same as solving (6.2) in [17]:

Because of the similarity of proofs of Proposition 6.2, Lemma 6.7 and Proposition 6.18 with what was provided for solving (6.2) in their corresponding lemma and propositions in [17], we have skipped them. So in continuation, first, we begin with introducing the function $ S_{ w, p} $ by the following lemma proved in [17]:
\begin{lemma}\label{6.1} 
The minimizer of the function $M(x)=x^2-2bx+c|x|^p$ for $p\geq 1$, $c>0$ is 
$S_{c,p}(b)$, where
the function $S_{c,p}$ from $\mathbb{R}$ to itself is defined by 
$$S_{c,p}(t)=\begin{cases}
F^{-1}_{c,p}(t) & p>1\\
t-\f{c}{2} & p=1,~t> \f{c}{2}\\
0 & p=1,~ |t|\leq \f{c}{2} \\
t+\f{c}{2} & p=1,~t< -\f{c}{2}
\end{cases}$$
where the function $F_{c,p}$ is defined by 
$$F_{c,p}(t)=t+\f{cp}{2} \Sign (t) |t|^{p-1}.$$
\end{lemma}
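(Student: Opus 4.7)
The plan is to exploit the convexity and coercivity of $M$. First I would observe that $M$ is continuous on $\mathbb{R}$ with $M(x) \to +\infty$ as $|x| \to \infty$ (since the $x^2$ term dominates for any $p \geq 1$), so a global minimizer exists, and strict convexity of $M$ (sum of the strictly convex $x^2-2bx$ and the convex $c|x|^p$) guarantees uniqueness. The task then reduces to locating this unique critical point.

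For the case $p>1$, the function $|x|^p$ is continuously differentiable, with derivative $p\,\mathrm{Sign}(x)|x|^{p-1}$ (interpreted as $0$ at $x=0$). Setting $M'(x)=0$ yields
\[
2x - 2b + cp\,\mathrm{Sign}(x)|x|^{p-1} = 0,
\]
which rearranges exactly to $F_{c,p}(x) = b$. The key sub-step is to verify that $F_{c,p}:\mathbb{R}\to\mathbb{R}$ is a bijection: it is continuous, strictly increasing (since its ``derivative'' $1 + \tfrac{cp(p-1)}{2}|x|^{p-2}$ is positive wherever it makes sense, and $F_{c,p}$ is manifestly monotone at $0$ by inspection of left and right limits), and coercive. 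Hence $F_{c,p}^{-1}(b)$ is well defined, and this is the unique minimizer, giving $S_{c,p}(b)=F_{c,p}^{-1}(b)$ as claimed.

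For the case $p=1$, the function $c|x|$ fails to be differentiable at $0$, so I would split into three cases by examining the subdifferential $\partial M(x)$:
\begin{itemize}
\item On $x>0$, $M'(x)=2x-2b+c$; setting this to zero gives $x=b-c/2$, valid (i.e.\ positive) precisely when $b>c/2$.
\item On $x<0$, $M'(x)=2x-2b-c$; setting this to zero gives $x=b+c/2$, valid precisely when $b<-c/2$.
\item At $x=0$, $\partial M(0) = \{-2b+ct : t\in[-1,1]\}$, which contains $0$ iff $|b|\leq c/2$.
\end{itemize}
These three cases exhaust $\mathbb{R}$ and agree exactly with the piecewise definition of $S_{c,1}(b)$, so the claim follows in the $p=1$ case as well.

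The main obstacle I foresee is the verification that $F_{c,p}$ is a genuine bijection for all $p>1$ and $c>0$, particularly the smoothness/monotonicity argument at $x=0$ when $1<p<2$ (where the derivative of $|x|^{p-1}$ blows up as $x\to 0$). However, this can be bypassed entirely by a direct monotonicity check: for $x_1<x_2$, the map $x\mapsto \mathrm{Sign}(x)|x|^{p-1}$ is strictly increasing for $p>1$ (it is the odd extension of the strictly increasing $x^{p-1}$ on $[0,\infty)$), and adding the strictly increasing $x$ preserves strict monotonicity, so $F_{c,p}(x_1)<F_{c,p}(x_2)$. Combined with the coercivity $F_{c,p}(x)\to\pm\infty$ as $x\to\pm\infty$, bijectivity follows, completing the proof.
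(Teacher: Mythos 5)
Your proof is correct, and it is essentially the standard argument: the paper itself gives no proof of this lemma, deferring to [17], where the minimizer is characterized exactly as you do — strict convexity plus coercivity for existence/uniqueness, the equation $F_{c,p}(x)=b$ with $F_{c,p}$ a strictly increasing continuous bijection for $p>1$, and the soft-thresholding case analysis via the subdifferential at $0$ for $p=1$. Your direct monotonicity check for $F_{c,p}$, which avoids differentiating $|x|^{p-1}$ near $0$ when $1<p<2$, is a sound way to handle the only delicate point.
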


The iterative process will be based on the function $\bS_{W,P}$ from $\cH$ to $\cH$ which is defined in the following proposition:

\begin{proposition}\label{6.2}
Suppose $K:\cH\lo \cH'$ is an operator, with $\|KK^*\|<1$,
$(\var_\ga)_{\ga\in\Ga}$ is an orthonormal basis for $\cH$, and 
$W=(w_\ga)_{\ga\in\Ga}$ is a sequence such that $\forall \ga\in\Ga$ 
$w_\ga>c>0$. Further suppose $g$ is an element of $\cH'$. Let 
$\Ga=\Ga_1\cup\Ga_2\cup \dots\cup \Ga_n$, $W=W_1\cup W_2\cup\dots\cup W_n$, 
$P =\{p_1,\dots,p_n\}$ such that $p_i\geq 1$ for $1\leq i\leq n$. 
Choose $a\in 
\cH$ and define the functional $\Phi^{S\cup R}_{W, P}(f;a)$ on $\cH$ by 
$$\Phi^{S\cup R}_{W, P} (f;a)=\|Kf-g\|^2+ \sum_{i=1}^n \sum_{\ga\in\Ga_i} 
w_\ga |f_\ga|^{p_i}+ \|f-a\|^2 - \|K(f-a)\|^2.$$
Also, define the operators $\bS_{W, P}$ by 
$$\bS_{W, P}(h)=\sum_{i=1}^n \sum_{\ga\in\Ga_i}S_{w_\ga,p_i}(h_\ga)\var_\ga$$
with functions $S_{w,p_i}$ from $\mathbb{R}$ to itself given by Lemma 6.1.

By these assumptions, we will have 

A) $f_{\min}$=minimizer of the functional $\Phi_{W, P}^{S\cup 
R}=\bS_{W, P}(a+K^*(g-Ka)).$

B) for all $h\in \cH$, one has
$$\Phi^{S\cup R}_{W, P}(f_{\min}+h;a) \geq \Phi^{S\cup 
R}_{W, P}(f_{\min};a)+\|h\|^2.$$
\end{proposition}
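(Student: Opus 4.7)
The plan is to exploit the surrogate structure of $\Phi^{S\cup R}_{W,P}(f;a)$: the added term $\|f-a\|^2 - \|K(f-a)\|^2$ is precisely what is needed to cancel the cross term $\langle Kf, Kf\rangle$ hidden inside $\|Kf-g\|^2$, so that the quadratic part of the functional becomes $\|f-h\|^2$ for an explicit $h$. Concretely, my first step will be to expand
\begin{equation*}
\|Kf-g\|^2 + \|f-a\|^2 - \|K(f-a)\|^2
= \|f\|^2 - 2\langle f, \, a+K^*(g-Ka)\rangle + C_0,
\end{equation*}
where $C_0 = \|g\|^2 + \|a\|^2 - \|Ka\|^2$ is independent of $f$. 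Completing the square with $h := a + K^*(g-Ka)$ yields
\begin{equation*}
\Phi^{S\cup R}_{W,P}(f;a) \;=\; \|f-h\|^2 + \sum_{i=1}^n\sum_{\gamma\in\Gamma_i} w_\gamma|f_\gamma|^{p_i} + C_1,
\end{equation*}
with $C_1$ depending only on $g, a, K$.

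Next, using the orthonormal basis $(\varphi_\gamma)_{\gamma\in\Gamma}$ and Parseval's identity (Fact 3 in the paper), $\|f-h\|^2 = \sum_\gamma |f_\gamma - h_\gamma|^2$, so the functional decouples into a family of one-dimensional problems:
\begin{equation*}
\Phi^{S\cup R}_{W,P}(f;a) - C_1 \;=\; \sum_{i=1}^n \sum_{\gamma\in\Gamma_i}\Bigl[\,|f_\gamma - h_\gamma|^2 + w_\gamma|f_\gamma|^{p_i}\,\Bigr].
\end{equation*}
Each summand has the form $x^2 - 2h_\gamma x + w_\gamma|x|^{p_i} + h_\gamma^2$, so Lemma 6.1 applied coordinatewise identifies its minimizer as $S_{w_\gamma,p_i}(h_\gamma)$. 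Assembling the coordinates back via an appeal to Fact 4 establishes that the unique minimizer is $\mathbf{S}_{W,P}(h) = \mathbf{S}_{W,P}(a+K^*(g-Ka))$, proving part (A).

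For part (B), the idea is that the quadratic part $\|f-h\|^2$ automatically produces the desired $\|h'\|^2$ surplus while the convex penalty contributes a nonnegative remainder at the minimizer. Writing $f = f_{\min} + h'$ and using the decoupled form,
\begin{equation*}
\Phi^{S\cup R}_{W,P}(f_{\min}+h';a) - \Phi^{S\cup R}_{W,P}(f_{\min};a)
= \|h'\|^2 + 2\langle f_{\min}-h, h'\rangle + \sum_\gamma \bigl[w_\gamma|f_{\min,\gamma}+h'_\gamma|^{p_i}-w_\gamma|f_{\min,\gamma}|^{p_i}\bigr].
\end{equation*}
Since $f_{\min,\gamma} = S_{w_\gamma,p_i}(h_\gamma)$ minimizes the scalar function $x\mapsto (x-h_\gamma)^2 + w_\gamma|x|^{p_i}$, the scalar first-order (sub)optimality condition gives $2(f_{\min,\gamma}-h_\gamma) + \xi_\gamma = 0$ for some subgradient $\xi_\gamma \in \partial(w_\gamma|\cdot|^{p_i})(f_{\min,\gamma})$. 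Convexity of $x\mapsto w_\gamma|x|^{p_i}$ (valid for $p_i\geq 1$) then gives $w_\gamma|f_{\min,\gamma}+h'_\gamma|^{p_i}-w_\gamma|f_{\min,\gamma}|^{p_i} \geq \xi_\gamma h'_\gamma = -2(f_{\min,\gamma}-h_\gamma)h'_\gamma$. Summing these inequalities cancels the cross term $2\langle f_{\min}-h, h'\rangle$ exactly, leaving the stated bound $\|h'\|^2$.

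The main obstacle I anticipate is handling the case $p_i=1$ rigorously, where the penalty is not differentiable at $0$; here one must be careful to use the soft-thresholding formulation from Lemma 6.1 and the subdifferential of $|\cdot|$ rather than a classical derivative. Beyond that, the proof is essentially algebraic manipulation together with the scalar lemma, and the structure mirrors the surrogate-functional argument of Daubechies--Defrise--De~Mol used for the single-constraint case in [17].
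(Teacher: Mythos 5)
Your argument is correct and is essentially the proof the paper intends: the paper omits the proof of Proposition 6.2, citing its similarity to the surrogate-functional argument of Daubechies--Defrise--De Mol in [17], and your completion of the square to get $\|f-h\|^2$ with $h=a+K^*(g-Ka)$, the coordinatewise decoupling via Parseval, the application of Lemma 6.1 for part (A), and the subgradient/convexity cancellation for part (B) reproduce exactly that argument (your unified subdifferential treatment of $p_i=1$ and $p_i>1$ is only a cosmetic variation on the case split in [17]). The only point worth adding for completeness is that $\bS_{W,P}(h)\in\cH$, which follows since $|S_{w_\ga,p_i}(h_\ga)|\leq|h_\ga|$.
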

Below it will be introduced the sequence $\{f^n\}_{n=1}^{+\infty}$ whose convergence to a minimizer of (6.3) is supposed to be proven:
\begin{definition}\label{6.3}
Pick $f^0$ in $\cH$. We define the functions $f^n$
recursively by the following algorithm:
\begin{align*}
f^1 &=\text{arg-min}(\Phi^{S\cup R}_{W, P}(f;f^0)), \quad f^2=\text{arg-min}(\Phi^{S\cup
R}_{W, P}(f,f^1)),\dots,\\
f^n &=\text{arg-min}(\Phi^{S\cup R}_{W, P}(f;f^{n-1})).
\end{align*}
\end{definition}

\begin{corollary}\label{6.4}
By Definition 6.3, we clearly have
$$f^n=\bS_{W, P}(f^{n-1}+K^*(g-Kf^{n-1})).$$
\end{corollary}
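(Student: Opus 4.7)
The plan is essentially to observe that this corollary is an immediate consequence of Proposition 6.2 together with the recursive Definition 6.3, with no additional work needed beyond substitution. So the proof should be short and direct.

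First, I would recall that Definition 6.3 specifies $f^n = \text{arg-min}(\Phi^{S\cup R}_{W, P}(f; f^{n-1}))$, where the auxiliary point in the functional is set to be the previous iterate $f^{n-1}$. Then I would invoke part A) of Proposition 6.2, which gives a closed-form expression for the minimizer of $\Phi^{S\cup R}_{W, P}(f; a)$ in terms of the auxiliary point $a$, namely
\begin{equation*}
\text{arg-min}(\Phi^{S\cup R}_{W, P}(f; a)) = \bS_{W, P}(a + K^*(g - Ka)).
\end{equation*}

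Next, I would simply substitute $a = f^{n-1}$ into this formula. Combining this substitution with the definition of $f^n$ yields
\begin{equation*}
f^n = \text{arg-min}(\Phi^{S\cup R}_{W, P}(f; f^{n-1})) = \bS_{W, P}(f^{n-1} + K^*(g - Kf^{n-1})),
\end{equation*}
which is exactly the claim. Since the hypotheses of Proposition 6.2 (the operator norm bound $\|KK^*\| < 1$, orthonormality of the basis, positivity and lower-boundedness of the weights, and $p_i \geq 1$) are part of the standing assumptions for this section, no separate verification is needed.

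There is no real obstacle here: the entire content of the corollary is the observation that the recursive definition of $\{f^n\}$ can be rewritten in the fixed-point form $f^n = T(f^{n-1})$ with $T(h) = \bS_{W,P}(h + K^*(g - Kh))$. This is precisely the form that will later be used in proving convergence of the iterative scheme to a minimizer of (6.3).
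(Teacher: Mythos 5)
Your proposal is correct and matches the paper's (implicit) argument exactly: the corollary is just Definition 6.3 combined with Proposition 6.2(A), substituting $a=f^{n-1}$ into the closed-form minimizer $\bS_{W,P}(a+K^*(g-Ka))$. Nothing further is needed.
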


\begin{definition}\label{6.5}
We define operator $\bT$ from $\cH$ to $\cH$ by 
$$\bT(f)=\bS_{W, P}(f+K^*(g-Kf)).$$
\end{definition}

\begin{corollary}\label{6.6}
By Definition 6.5, we have
$$\bT^n(f^0)=f^n.$$
\end{corollary}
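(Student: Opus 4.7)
The plan is to prove the identity $\bT^n(f^0) = f^n$ by a straightforward induction on $n$, where all the real work has already been done in Corollary 6.4 (which rewrites the arg-min in closed form via $\bS_{W,P}$) and Definition 6.5 (which packages that closed form as the operator $\bT$). I expect no serious obstacle: the statement is essentially a tautology once one observes that the recursion defining $f^n$ is the same recursion generated by iterating $\bT$ starting at $f^0$.

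For the base case I would take $n=1$. By Definition 6.3, $f^1 = \text{arg-min}(\Phi^{S\cup R}_{W,P}(f;f^0))$, and Corollary 6.4 (applied with $n=1$) immediately yields
\[
f^1 = \bS_{W,P}(f^0 + K^*(g - Kf^0)) = \bT(f^0),
\]
where the last equality is just Definition 6.5. Thus $\bT^1(f^0) = f^1$.

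For the inductive step, assume $\bT^{n-1}(f^0) = f^{n-1}$. Applying $\bT$ to both sides and using Definition 6.5 followed by Corollary 6.4 gives
\[
\bT^n(f^0) = \bT(\bT^{n-1}(f^0)) = \bT(f^{n-1}) = \bS_{W,P}(f^{n-1} + K^*(g - Kf^{n-1})) = f^n,
\]
which completes the induction. The entire argument thus reduces to chaining Definitions 6.3 and 6.5 together with Corollary 6.4; the heavy lifting lies in Proposition 6.2, which was what justified the closed-form expression for the minimizer in the first place, and that has already been established earlier in the section.
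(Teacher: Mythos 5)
Your induction is correct and is precisely the argument the paper implicitly relies on: Corollary 6.4 rewrites the recursion of Definition 6.3 as $f^n=\bS_{W,P}(f^{n-1}+K^*(g-Kf^{n-1}))=\bT(f^{n-1})$, and iterating gives $\bT^n(f^0)=f^n$. The paper simply states the corollary as immediate, so your write-up just makes the same (trivial) induction explicit.
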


The following lemma can be proved in the same way as its corresponding
lemma in [17]:
\begin{lemma}\label{6.7}
If $f^*\in \{f\in\cH|~\bT(f)=f\}$ then $f^*$ is a minimizer of $\Phi_{W, P}$.
\end{lemma}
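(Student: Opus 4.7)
The plan is to exploit the special structure of the surrogate functional $\Phi^{S\cup R}_{W,P}(\,\cdot\,;a)$ relative to the original $\Phi_{W,P}$ and then apply both parts of Proposition 6.2. The key observation, which I would verify first, is the identity $\Phi^{S\cup R}_{W,P}(f;f)=\Phi_{W,P}(f)$: the extra summand $\|f-a\|^2-\|K(f-a)\|^2$ vanishes when one sets $a=f$. Equally elementary but indispensable is the expansion
\[
\Phi^{S\cup R}_{W,P}(f^*+h;\,f^*)=\Phi_{W,P}(f^*+h)+\|h\|^2-\|Kh\|^2,
\]
which simply rearranges the definition of $\Phi^{S\cup R}_{W,P}$ when the second slot is fixed at $f^*$.

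Next I would use the fixed-point hypothesis. By Corollary 6.4 / Definition 6.5, $\bT(f^*)=f^*$ means $f^*=\bS_{W,P}(f^*+K^*(g-Kf^*))$. Part (A) of Proposition 6.2 applied with $a=f^*$ identifies this right-hand side as the minimizer of $\Phi^{S\cup R}_{W,P}(\,\cdot\,;f^*)$; thus $f^*$ itself is that minimizer. Now I would invoke Part (B) of Proposition 6.2 with the same $a=f^*$ and with $f_{\min}=f^*$, obtaining for every $h\in\cH$
\[
\Phi^{S\cup R}_{W,P}(f^*+h;\,f^*)\;\geq\;\Phi^{S\cup R}_{W,P}(f^*;\,f^*)+\|h\|^2.
\]

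Finally I would plug the two identities from the first paragraph into this inequality. The left side becomes $\Phi_{W,P}(f^*+h)+\|h\|^2-\|Kh\|^2$ and the right side becomes $\Phi_{W,P}(f^*)+\|h\|^2$. Cancelling $\|h\|^2$ yields
\[
\Phi_{W,P}(f^*+h)\;\geq\;\Phi_{W,P}(f^*)+\|Kh\|^2\;\geq\;\Phi_{W,P}(f^*).
\]
Since every $f\in\cH$ can be written as $f=f^*+h$ for some $h$, this shows that $f^*$ minimizes $\Phi_{W,P}$.

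I do not foresee a real obstacle: the whole argument is essentially bookkeeping with the surrogate functional, and the heavy lifting (existence and characterization of the surrogate's minimizer together with the strong convexity estimate $+\|h\|^2$) has already been done inside Proposition 6.2. The only place one must be slightly careful is in checking that the condition $\|KK^*\|<1$ imposed in Proposition 6.2 is inherited here, so that $\|h\|^2-\|Kh\|^2\geq 0$ and the discarded term in the last display is genuinely nonnegative; otherwise the chain of inequalities still gives the minimizer property directly without needing nonnegativity. Aside from this bookkeeping point, the proof is a two-line consequence of Proposition 6.2 once the identity $\Phi^{S\cup R}_{W,P}(f;f)=\Phi_{W,P}(f)$ is recognized.
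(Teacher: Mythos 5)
Your argument is correct and is essentially the paper's intended proof: the paper omits the details and refers to the corresponding lemma in [17], where exactly this surrogate-functional argument is used — fixed point of $\bT$ plus Proposition 6.2(A) identifies $f^*$ as the minimizer of $\Phi^{S\cup R}_{W,P}(\cdot\,;f^*)$, then part (B) together with the identities $\Phi^{S\cup R}_{W,P}(f;f)=\Phi_{W,P}(f)$ and $\Phi^{S\cup R}_{W,P}(f^*+h;f^*)=\Phi_{W,P}(f^*+h)+\|h\|^2-\|Kh\|^2$ yields $\Phi_{W,P}(f^*+h)\geq\Phi_{W,P}(f^*)+\|Kh\|^2\geq\Phi_{W,P}(f^*)$. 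Your closing worry is unnecessary, since the discarded term $\|Kh\|^2$ is nonnegative regardless of $\|K\|<1$, as you yourself note.
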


By the following lemma, we state characteristics of the function $\mathbf{T}$ and the 
sequence $\{f^n\}_{n=1}^\infty$ which were introduced in Definition 6.3 and 
Definition 6.5.

\begin{lemma}\label{6.8}
Let $K$ be a bounded linear operator from $\cH$ to $\cH'$, with 
the norm strictly bounded by 1. 
Take $p_i\in [1,2]$ for $1\leq i\leq n$, and let $\bS_{W, P}$ be the operator 
defined by 
$$\bS_{W, P}(h)=\sum_{\ga\in\Ga} S_{w_\ga,p_\ga}(h_\ga)\var_\ga$$
where the sequence $W=(w_\ga)_{\ga\in \Ga}$ is uniformly bounded below away 
from zero, i.e. there exists a constant $c>0$ such that $\forall \ga\in\Ga;~ 
w_\ga\geq c$ and we have $p_i=p_\ga$ for every $\ga\in\Ga_i$. Let
$\Ga=\Ga_1\cup \Ga_2\cup \dots\cup \Ga_{n}, W_i =\{w_\ga\}_{\ga\in\Ga_i},$ 
$P=\{p_1,p_2,\dots,p_n\}$ and suppose 
$|\!|\!|f|\!|\!|^{p_i}_{W_i,p_i}=\sum_{\ga\in\Ga_i} w_\ga|f_\ga|^{p_i}$,
\begin{align*}
|\!|\!|f|\!|\!|^{P}_{W, P} &=\sum_{\ga\in\Ga} w_\ga |f_\ga|^{p_\ga} \\
&=\sum_{i=1}^n \sum_{\ga\in\Ga_i}w_\ga |f_\ga|^{p_i} =|\!|\!| 
f|\!|\!|^{p_1}_{W_1,p_1}+\dots+ |\!|\!|f|\!|\!|^{p_n}_{W_n,p_n}, 
\end{align*}
\begin{align*}
\Phi_{W, P}(f) &=\|Kf-g\|^2+|\!|\!|f|\!|\!|^{P}_{W, P},\\
\Phi^{S\cup R}_{W, P}(f)&=\Phi_{W, P}(f)+\|f-a\|^2-\|K(f-a)\|^2.
\end{align*}
Then 

A) $\forall v,v'\in \cH;~ \|\bS_{W, P}(v)-\bS_{W, P}(v')\|\leq \|v-v'\|$.

B) $\forall v,v'\in \cH;~ \|\bT(v)-\bT(v')\|\leq \|v-v'\|$.

C) Both $(\Phi_{W, P}(f^n))_{n\in\BN}$ and $(\Phi_{W, P}^{S\cup 
R}(f^{n+1};f^n))_{n\in\BN}$ are non-increasing sequences.

D) $\exists M>0;~ \forall n\in N;~ \|f^n\|<M$.

E) $\sum_{n=0}^\infty \|f^{n+1}-f^n\|^2<\infty$.

F) $\|\bT^{n+1}(f^0)-\bT^n(f^0)\|=\|f^{n+1}-f^n\|\lo 0 \quad \text{for} \quad n\lo\infty$.
\end{lemma}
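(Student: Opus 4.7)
The plan is to attack each part in the order A, B, C, E, D, F, since E flows directly from the energy-decrease argument in C together with Proposition 6.2 B), while D requires a separate interpolation that is best handled once C is in place.

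For A, I would reduce to a one-dimensional statement. Since $\bS_{W,P}$ acts diagonally on the orthonormal basis, $\|\bS_{W,P}(v)-\bS_{W,P}(v')\|^2=\sum_{\gamma}|S_{w_\gamma,p_\gamma}(v_\gamma)-S_{w_\gamma,p_\gamma}(v'_\gamma)|^2$, so it suffices to prove every $S_{c,p}:\mathbb{R}\to\mathbb{R}$ is $1$-Lipschitz. For $p=1$ this is the standard soft-threshold. For $p>1$, $S_{c,p}=F_{c,p}^{-1}$ with $F_{c,p}'(t)=1+\frac{cp(p-1)}{2}|t|^{p-2}\geq 1$ for $t\neq 0$ (with continuity at $0$), so $|S_{c,p}'|\leq 1$. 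For B, write $\bT(v)-\bT(v')=\bS_{W,P}(u)-\bS_{W,P}(u')$ where $u=v+K^*(g-Kv)$, $u'=v'+K^*(g-Kv')$, apply A, and use $\|u-u'\|=\|(I-K^*K)(v-v')\|\leq \|v-v'\|$, which holds because $0\preceq K^*K\preceq \|K\|^2 I\prec I$ (equivalently $\|KK^*\|<1$).

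For C, the minimizer property in Proposition 6.2 A gives $\Phi^{S\cup R}_{W,P}(f^{n+1};f^n)\leq \Phi^{S\cup R}_{W,P}(f^n;f^n)=\Phi_{W,P}(f^n)$. Using the identity $\Phi^{S\cup R}_{W,P}(f;a)=\Phi_{W,P}(f)+\|f-a\|^2-\|K(f-a)\|^2$ and $\|K\|<1$, the surcharge is nonnegative, so $\Phi_{W,P}(f^{n+1})\leq \Phi^{S\cup R}_{W,P}(f^{n+1};f^n)\leq \Phi_{W,P}(f^n)$. The second monotonicity follows from the same chain: $\Phi^{S\cup R}_{W,P}(f^{n+2};f^{n+1})\leq \Phi_{W,P}(f^{n+1})\leq \Phi^{S\cup R}_{W,P}(f^{n+1};f^n)$. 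For E, apply Proposition 6.2 B with $a=f^n$, $f_{\min}=f^{n+1}$, $h=f^n-f^{n+1}$ (so $f_{\min}+h=f^n$): this yields $\Phi^{S\cup R}_{W,P}(f^n;f^n)\geq \Phi^{S\cup R}_{W,P}(f^{n+1};f^n)+\|f^{n+1}-f^n\|^2$, i.e.\ $\Phi_{W,P}(f^n)-\Phi_{W,P}(f^{n+1})\geq (1-\|K\|^2)\|f^{n+1}-f^n\|^2$ after combining with the $\Phi^{S\cup R}$-$\Phi$ identity. Summing telescopes against the positive lower bound $0$ of $\Phi_{W,P}$, giving finiteness of $\sum\|f^{n+1}-f^n\|^2$. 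Then F is immediate from E, and Corollary 6.6 identifies $\bT^{n+1}(f^0)-\bT^n(f^0)=f^{n+1}-f^n$.

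The most delicate step I anticipate is D. From C we know $|\!|\!|f^n|\!|\!|^{P}_{W,P}\leq \Phi_{W,P}(f^0)=:C$, and uniform positivity $w_\gamma\geq c>0$ yields $\sum_{\gamma}|f^n_\gamma|^{p_\gamma}\leq C/c$. Because $p_\gamma\in[1,2]$, I would split the index set: for $|f^n_\gamma|\leq 1$, $|f^n_\gamma|^2\leq |f^n_\gamma|^{p_\gamma}$, while for $|f^n_\gamma|>1$ only finitely many (at most $C/c$ of them) can occur, and each such $|f^n_\gamma|^{p_\gamma}\geq |f^n_\gamma|$, so $\sum_{|f^n_\gamma|>1}|f^n_\gamma|^2\leq (\sum|f^n_\gamma|^{p_\gamma})^{2}\leq (C/c)^2$. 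Combining, $\|f^n\|^2\leq C/c+(C/c)^2$, uniformly in $n$. The obstacle is simply getting the interpolation right when different coordinates carry different exponents $p_\gamma\in[1,2]$; the splitting above is essentially the only clean way and relies crucially on $p_\gamma\leq 2$.
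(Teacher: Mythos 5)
Your proposal is correct, and for parts A, B, C, E and F it reconstructs essentially the arguments of [17] that the paper simply cites (nonexpansiveness of the scalar shrinkage maps $S_{c,p}$, nonexpansiveness of $I-K^*K$ under $\|K\|<1$, the surrogate monotonicity from Proposition 6.2A, and the quadratic improvement of Proposition 6.2B telescoped against the lower bound $0$; your constant $(1-\|K\|^2)$ in E is not the sharpest available but is positive, which is all that is needed). The genuine divergence is part D, the only part the paper proves in detail. The paper argues block by block: for $\gamma\in\Gamma_i$ it writes $|f_\gamma|^2=\tfrac{1}{w_\gamma}|f_\gamma|^{2-p_i}\cdot w_\gamma|f_\gamma|^{p_i}$, bounds $\sup_{\gamma\in\Gamma_i} w_\gamma^{(2-p_i)/p_i}|f_\gamma|^{2-p_i}$ by $|\!|\!|f|\!|\!|^{2-p_i}_{W_i,p_i}$ and $1/w_\gamma$ by $w_\gamma^{(2-p_i)/p_i}/c^{2/p_i}$, getting the weighted embedding $\sum_{\gamma\in\Gamma_i}|f_\gamma|^2\le c^{-2/p_i}|\!|\!|f|\!|\!|^2_{W_i,p_i}\le c^{-2/p_i}\Phi_{W,P}(f^0)^{2/p_i}=:M_i$, and then sums the finitely many $M_i$. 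You instead split the whole index set at $|f^n_\gamma|\le 1$ versus $|f^n_\gamma|>1$, using $p_\gamma\le 2$ on the small coordinates and $p_\gamma\ge 1$ on the large ones, which yields $\|f^n\|^2\le C/c+(C/c)^2$ with $C=\Phi_{W,P}(f^0)$ in one stroke; the aside about there being at most $C/c$ large coordinates is not even needed, since "sum of squares $\le$ square of the sum" already closes that case. Your route is more elementary, handles the mixed exponents uniformly, and never uses that the number of blocks $n$ is finite, whereas the paper's blockwise estimate is the direct adaptation of the $\ell^{p}\hookrightarrow\ell^2$ argument of [17] and gives a cleaner per-block constant. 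Both produce the uniform bound required for the weak-compactness step in Lemma 6.13, so the difference is one of method, not of substance.
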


\begin{proof} 
Parts A, B, C, E and F have similar proofs to their corresponding lemmas in [17]; thus,
we just prove part D, which has some differences in its proof: First, we have:
\begin{align*}
|\!|\!|f^n|\!|\!|^{p_i}_{W_i,p_i} \leq |\!|\!|f^n|\!|\!|^{P}_{W, P}& \leq 
\|K(f^n)-g\|^2+ 
|\!|\!|f^n|\!|\!|^{P}_{W, P}\\
&=\Phi_{W, P}(f^n)\leq \Phi_{W, P}(f^0).
\tag{$6.6$}
\end{align*}
Moreover, we have for every $\ga\in \Ga_i$:
\begin{align*}
w_\ga^{\f{2-p_i}{p_i}} |f_\ga|^{2-p_i} &=(w_\ga|f_\ga|^{p_i})^{\f{2-p_i}{p_i}} 
\leq \left( \sum_{\ga\in\Ga_i} w_\ga|f_\ga|^{p_i}\right)^{\f{2-p_i}{p_i}} = 
|\!|\!|f|\!|\!|^{2-p_i}_{W_i,p_i}
\end{align*}
or
\begin{align*}
\sup_{\ga\in\Ga_i} \left[ w_\ga^{\f{2-p_i}{p_i}} |f_\ga|^{2-p_i}\right] 
\leq |\!|\!|f|\!|\!|^{2-p_i}_{W_i,p_i} \tag{$6.7$}
\end{align*}
and for every $p_i\in [1,2]$:
\begin{align*}
\forall \ga\in \Ga_i \;\;\;
c\leq w_\ga & \Lo c^{\f{2}{p_i}} \leq w_\ga^{\f{2}{p_i}} 
=w_\ga w_\ga^{\f{2-p_i}{p_i}} \\
&\Lo \f{1}{w_\ga} \leq \f{w_\ga^{\f{2-p_i}{p_i}}}{c^{\f{2}{p_i}}} \tag{$6.8$}.
\end{align*}
We deduce from (6.7), (6.8) that
\begin{align*}
\sum_{\ga\in\Ga_i} |f_\ga|^2 &=\sum_{\ga\in\Ga_i} \f{1}{w_\ga} |f_\ga|^{2-p_i} 
w_\ga|f_\ga|^{p_i} \\
&\leq (\sup_{\ga\in\Ga_i} \f{1}{w_\ga} |f_\ga|^{2-p_i} )\sum_{\ga\in\Ga_i} 
w_\ga|f_\ga|^{p_i} \\
&\leq (\sup_{\ga\in\Ga_i} \f{w_\ga^{\f{2-p_i}{p_i}}}{c^{\f{2}{p_i}}} 
|f_\ga|^{2-p_i}) |\!|\!|f|\!|\!|^{p_i}_{W_i,p_i} \\
&\leq c^{-\f{2}{p_i}} |\!|\!|f|\!|\!|^{2-p_i}_{W_i,p_i} 
|\!|\!|f|\!|\!|^{p_i}_{W_i,p_i} 
= c^{-\f{2}{p_i}} |\!|\!|f|\!|\!|^2_{W_i,p_i}. 
\tag{$6.9$}
\end{align*}
Consequently, for every $i; ~1\leq i\leq n$, by (6.6), (6.9) we conclude
\begin{align*}
\sum_{\ga\in\Ga_i} |(f^n)_\ga|^2 & \leq c^{-\f{2}{p_i}} 
|\!|\!|f^n|\!|\!|^2_{W_i,p_i} \\
&\leq c^{-\f{2}{p_i}} \Phi_{W, P}(f^0)^{\f{2}{p_i}}:=M_i \\
\Lo~ \|f^n\|^2&=\sum_{\ga\in\Ga} |(f^n)_\ga|^2=\sum_{i=1}^n \sum_{\ga\in\Ga_i} 
|(f^n)_\ga|^2\leq \sum_{i=1}^n M_i\\
\Lo \exists M>&0;~ \forall n\in N:~ \|f^n\|\leq M.
\end{align*}
\end{proof}

We recall Lemma 6.9 and Lemma 6.10, from [40] and Appendix B in [17]
with a minor modification.

\begin{lemma}\label{6.9}
Suppose for mapping $\bA$ from $\cH$ to $\cH$ and $v_0\in\cH$ we have the
following conditions 
\begin{itemize}
\item[(i)] $\forall v,v'\in \cH$; $\|\bA(v)-\bA(v')\| \leq \|v-v'\|.$ 
\item[(ii)] $\|\bA^{n+1}(v_0)-\bA^n(v_0)\|\lo 0$ \quad as\quad $n\lo\infty.$
\item[(iii)] $\{v\in\cH|~\bA(v)=v\}\neq \phi$.
\end{itemize}
Then 
$$\exists v^*\in {\cH}; \quad \bA(v^*)=v^*,~A^n(v_0)\ovs{weakly}{\lo} v^*.$$
\end{lemma}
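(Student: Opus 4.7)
My plan is to prove this by a standard fixed-point argument combining demiclosedness of $I - \mathbf{A}$ with Opial's lemma, exactly as is done for averaged non-expansive mappings in Hilbert space. First, I would establish that the orbit $\{\mathbf{A}^n(v_0)\}_{n \in \mathbb{N}}$ is bounded. Picking any $\tilde v \in \{v \in \mathcal{H} \mid \mathbf{A}(v) = v\}$ (non-empty by (iii)), condition (i) gives $\|\mathbf{A}^n(v_0) - \tilde v\| = \|\mathbf{A}^n(v_0) - \mathbf{A}^n(\tilde v)\| \leq \|v_0 - \tilde v\|$, so the orbit lies in a ball. In particular, the function $n \mapsto \|\mathbf{A}^n(v_0) - \tilde v\|$ is non-increasing, hence convergent, for every fixed point $\tilde v$.

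The second step is to extract, from the bounded sequence $\{\mathbf{A}^n(v_0)\}$ in the Hilbert space $\mathcal{H}$, a weakly convergent subsequence $\mathbf{A}^{n_k}(v_0) \rightharpoonup v^*$ (using Banach--Alaoglu / weak sequential compactness of closed balls in Hilbert spaces). I would then show that $v^*$ is a fixed point of $\mathbf{A}$ via the classical \emph{demiclosedness} argument: if $u_k \rightharpoonup v^*$ and $u_k - \mathbf{A}(u_k) \to 0$ strongly, then $v^* = \mathbf{A}(v^*)$. In our setting, $u_k = \mathbf{A}^{n_k}(v_0)$ and condition (ii) gives $u_k - \mathbf{A}(u_k) = \mathbf{A}^{n_k}(v_0) - \mathbf{A}^{n_k + 1}(v_0) \to 0$, so the hypotheses of demiclosedness apply. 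Demiclosedness itself is proved by expanding
\begin{equation*}
\|u_k - \mathbf{A}(v^*)\|^2 = \|u_k - \mathbf{A}(u_k)\|^2 + 2 \langle u_k - \mathbf{A}(u_k), \mathbf{A}(u_k) - \mathbf{A}(v^*) \rangle + \|\mathbf{A}(u_k) - \mathbf{A}(v^*)\|^2,
\end{equation*}
bounding the last term by $\|u_k - v^*\|^2$ using (i), and taking $\liminf$ while invoking weak lower semicontinuity of the norm, which forces $v^* = \mathbf{A}(v^*)$.

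The third step is to upgrade subsequential weak convergence to convergence of the full sequence, which is where I expect the only subtle ingredient. I would apply \emph{Opial's lemma}: in a Hilbert space, if a sequence $\{u_k\}$ has the property that $\|u_k - w\|$ converges for every $w$ in some set $F$ containing all weak cluster points of $\{u_k\}$, then $\{u_k\}$ converges weakly. By Step~1, $\|\mathbf{A}^n(v_0) - \tilde v\|$ converges for every fixed point $\tilde v$, and by Step~2 every weak cluster point is a fixed point; so if $v^*, v^{**}$ were two distinct weak cluster points, the identity $\|u - v^*\|^2 - \|u - v^{**}\|^2 = 2\langle u, v^{**} - v^* \rangle + \|v^*\|^2 - \|v^{**}\|^2$ together with weak convergence along the respective subsequences yields a contradiction. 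This gives a unique weak limit $v^*$ with $\mathbf{A}(v^*) = v^*$, establishing the conclusion.

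The main obstacle, and the only place where the Hilbert structure is genuinely used, is the demiclosedness step in Step~2; the rest is bookkeeping. The proof in Appendix B of [17] already carries out the analogous argument in a closely related setting, so I would model the write-up on that reference, adapting it to the weaker hypotheses stated here (no explicit averaging assumption, only (i), (ii), (iii)).
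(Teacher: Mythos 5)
Your overall strategy is the right one, and in fact it is exactly the route the paper itself leans on: the paper does not prove Lemma 6.9 at all, but simply recalls it from Opial [40] and Appendix B of [17], and those sources contain precisely the argument you outline (bounded orbit since $\|\mathbf{A}^n(v_0)-\tilde v\|$ is non-increasing for any fixed point $\tilde v$, weak subsequential compactness, demiclosedness of $I-\mathbf{A}$ at $0$, and the Opial-type uniqueness argument for the weak cluster point, which you carry out correctly in Step~3). The one place where your write-up as stated would not close is the end of the demiclosedness step: weak lower semicontinuity of the norm only gives $\|v^*-\mathbf{A}(v^*)\|^2\le\liminf_k\|u_k-\mathbf{A}(v^*)\|^2\le\liminf_k\|u_k-v^*\|^2$, and the right-hand side need not vanish under weak convergence, so no contradiction follows. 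What you need instead is the second expansion, about the weak limit: $\|u_k-\mathbf{A}(v^*)\|^2=\|u_k-v^*\|^2+2\langle u_k-v^*,\,v^*-\mathbf{A}(v^*)\rangle+\|v^*-\mathbf{A}(v^*)\|^2$, where the cross term tends to $0$ because $u_k\rightharpoonup v^*$; comparing this with the upper bound $\|u_k-v^*\|^2+o(1)$ obtained from your first expansion and (i) forces $\|v^*-\mathbf{A}(v^*)\|=0$. This is the Opial identity (the same device you already use in Step~3), so the repair is one line and the proof is otherwise complete and consistent with the references the paper cites.
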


\begin{lemma}\label{6.10}
Suppose for mapping $\bA$ from $\cH$ to $\cH$ and $v_0\in\cH$ we have the
following conditions 
\begin{itemize}
\item[(i)] $\forall v,v'\in \cH$; $\|\bA(v)-\bA(v')\| \leq \|v-v'\|.$ 
\item[(ii)] $\|\bA^{n+1}(v_0)-\bA^n(v_0)\|\lo 0$ \quad as\quad $n\lo\infty.$
\item[(iii)] $\exists 
\{A^{n_k}(v_0)\}_{k=1}^{+\infty}\subs\{A^n(v_0)\}_{n=1}^{+\infty}$, $\exists 
v'\in{\cH}$; $A^{n_k}(v_0)\ovs{weakly}{\lo} v'.$
\end{itemize}
Then 
$$ A(v')= v'.$$
\end{lemma}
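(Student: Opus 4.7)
The plan is to apply a standard demi-closedness argument (essentially Opial's lemma) for non-expansive mappings on a Hilbert space. Let $x_k := \mathbf{A}^{n_k}(v_0)$ along the subsequence from hypothesis (iii). I will argue by contradiction: assume $w := \mathbf{A}(v')-v' \neq 0$ and derive an inconsistency in the Hilbert norm identities.

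First I would record the two ingredients that make the subsequence well-behaved. Since $\{n_{k}, n_k+1, n_k+2,\dots\}$ is a telescoping chain of consecutive indices, hypothesis (ii) gives $\|\mathbf{A}(x_k)-x_k\|=\|\mathbf{A}^{n_k+1}(v_0)-\mathbf{A}^{n_k}(v_0)\|\to 0$ strongly, while hypothesis (iii) gives $x_k\rightharpoonup v'$ weakly. Combining these, $\mathbf{A}(x_k)=x_k-(x_k-\mathbf{A}(x_k))\rightharpoonup v'$ as well, because a strongly null perturbation does not affect the weak limit.

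Now I would perform the standard Opial-type expansion in two ways. On one hand, using non-expansiveness (i),
\begin{align*}
\|\mathbf{A}(x_k)-v'\|^2 &= \|\mathbf{A}(x_k)-\mathbf{A}(v')+w\|^2\\
&= \|\mathbf{A}(x_k)-\mathbf{A}(v')\|^2 + 2\langle \mathbf{A}(x_k)-\mathbf{A}(v'),w\rangle+\|w\|^2\\
&\leq \|x_k-v'\|^2+2\langle \mathbf{A}(x_k)-\mathbf{A}(v'),w\rangle+\|w\|^2.
\end{align*}
On the other hand, writing $\mathbf{A}(x_k)-v'=(x_k-v')-(x_k-\mathbf{A}(x_k))$ and expanding,
\begin{equation*}
\|\mathbf{A}(x_k)-v'\|^2=\|x_k-v'\|^2-2\langle x_k-v',x_k-\mathbf{A}(x_k)\rangle+\|x_k-\mathbf{A}(x_k)\|^2.
\end{equation*}
Subtracting $\|x_k-v'\|^2$ from both identities and combining yields
\begin{equation*}
-2\langle x_k-v',x_k-\mathbf{A}(x_k)\rangle+\|x_k-\mathbf{A}(x_k)\|^2 \leq 2\langle \mathbf{A}(x_k)-\mathbf{A}(v'),w\rangle+\|w\|^2.
\end{equation*}

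The finish is then to take $k\to\infty$. The sequence $\{x_k-v'\}$ is bounded (weak convergence implies norm boundedness), and $x_k-\mathbf{A}(x_k)\to 0$ strongly, so the left-hand side tends to $0$. For the right-hand side, $\mathbf{A}(x_k)-\mathbf{A}(v')\rightharpoonup v'-\mathbf{A}(v')=-w$, so $\langle \mathbf{A}(x_k)-\mathbf{A}(v'),w\rangle\to -\|w\|^2$, and the right-hand side tends to $-2\|w\|^2+\|w\|^2=-\|w\|^2$. Thus $0\leq -\|w\|^2$, forcing $w=0$, i.e.\ $\mathbf{A}(v')=v'$.

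The main obstacle is conceptual rather than computational: non-expansive maps are generally not weakly continuous, so one cannot simply pass $\mathbf{A}(x_k)\rightharpoonup \mathbf{A}(v')$ directly. The trick above is precisely the Opial device that substitutes for weak continuity by exploiting the parallelogram identity in the Hilbert space together with the strong nullity of $x_k-\mathbf{A}(x_k)$.
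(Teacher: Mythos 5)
Your proof is correct: the norm expansions, the use of non-expansiveness, and the limit passage (boundedness of $\{x_k-v'\}$, strong nullity of $x_k-\mathbf{A}(x_k)$, and weak convergence of $\mathbf{A}(x_k)$ to $v'$) all hold, and they force $\|\mathbf{A}(v')-v'\|^2\le 0$. The paper itself gives no proof of Lemma 6.10 — it recalls it from Opial [40] and Appendix B of [17] — and your argument is precisely the standard demiclosedness/Opial device used in those sources (note that for $\|\mathbf{A}(x_k)-x_k\|\to 0$ you only need that a subsequence of a null sequence is null; the ``telescoping chain'' remark is unnecessary).
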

By Lemma 6.9 and Lemma 6.10, we clearly have the following 
corollary:

\begin{corollary}\label{6.11}
Suppose for mapping $\bA$ from $\cH$ to $\cH$ and $v_0\in\cH$ we have the
following conditions 
\begin{itemize}
\item[(i)] $\forall v,v'\in \cH$; $\|\bA(v)-\bA(v')\|\leq \|v-v'\|.$
\item[(ii)] $\|\bA^{n+1}(v_0)-\bA^n(v)\|\lo 0$\quad as\quad $n\lo\infty.$
\item[(iii)] $\exists \{\bA^{n_k}(v_0)\}_{k=1}^\infty \subs 
\{\bA^n(v_0)\}_{n=1}^{+\infty}$, $\exists v'\in\cH; ~\bA^{n_k}(v_0) 
\ovs{weakly}{\lo} v'$.
\end{itemize}
Then 
$$\exists v^*\in{\cH};\quad \bA(v^*)=v^*, ~\bA^n(v_0) \ovs{weakly}{\lo} v^*.$$
\end{corollary}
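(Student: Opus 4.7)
The plan is to observe that Corollary 6.11 is essentially a bookkeeping combination of Lemma 6.9 and Lemma 6.10, so the proof should be very short and consist mostly of verifying that the hypotheses of Lemma 6.9 are satisfied once Lemma 6.10 has been applied. The only hypothesis of Lemma 6.9 not directly supplied in the statement of Corollary 6.11 is the non-emptiness of the fixed-point set of $\bA$, and this is precisely what Lemma 6.10 provides under the weak-subsequential-limit assumption (iii).

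First I would apply Lemma 6.10 to the weakly convergent subsequence $\bA^{n_k}(v_0) \ovs{weakly}{\lo} v'$ given by hypothesis (iii). Conditions (i) and (ii) are carried over unchanged from the hypotheses of Corollary 6.11, so Lemma 6.10 is directly applicable, and its conclusion yields $\bA(v') = v'$. In particular, $\{v\in\cH|~\bA(v)=v\} \neq \phi$ since it contains $v'$.

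Next I would feed this information into Lemma 6.9. The non-expansivity condition (i) and the asymptotic regularity condition (ii) are again exactly the same as in Corollary 6.11, and we have just verified the third hypothesis of Lemma 6.9, namely the non-emptiness of the fixed-point set. Lemma 6.9 therefore furnishes a (possibly different) fixed point $v^* \in \cH$ with $\bA(v^*) = v^*$ such that $\bA^n(v_0) \ovs{weakly}{\lo} v^*$, which is exactly the conclusion of Corollary 6.11.

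There is essentially no obstacle here; the only thing worth commenting on is that $v^*$ need not coincide with the $v'$ produced by Lemma 6.10, but this is irrelevant because the statement only asserts the existence of \emph{some} fixed point to which the full sequence converges weakly. All the heavy lifting — in particular, demibounded type arguments for the weak-subsequential-limit step in Lemma 6.10 and the Opial-style uniqueness-of-weak-cluster-point argument in Lemma 6.9 — has already been done, so no further calculation is required.
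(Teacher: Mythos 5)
Your argument is correct and is exactly the route the paper intends: the paper gives no written proof, stating only that the corollary follows "by Lemma 6.9 and Lemma 6.10," and your two-step combination — Lemma 6.10 applied to the weak subsequential limit to certify that the fixed-point set is non-empty, then Lemma 6.9 to get a fixed point $v^*$ with $\bA^n(v_0)\ovs{weakly}{\lo} v^*$ — is precisely that bookkeeping. Your remark that $v^*$ need not equal $v'$ is a fair observation and does not affect the conclusion.
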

The following lemma which is a classical result from Functional Analysis (see Theorem 8.25 in [38], for example) has a key role to prove that $\{f^n\}_{n=1}^{+\infty}$ is weakly convergent to a minimizer of (6.3):\\
\begin{lemma}\label{6.12}
Every bounded sequence in a reflexive space has a weakly convergent 
subsequence.
\end{lemma}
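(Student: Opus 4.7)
The plan is to establish this via the classical reduction to a separable reflexive subspace combined with the Banach--Alaoglu theorem. Since the statement is a well-known result of functional analysis, I would follow the standard argument rather than inventing something new, but I will outline it carefully.

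First I would pass from the ambient reflexive space $X$ to the closed linear span $Y := \overline{\operatorname{span}}\{x_n\}_{n=1}^{\infty}$ of the bounded sequence. Two facts make this reduction useful: $(a)$ $Y$ is separable by construction (finite rational combinations of the $x_n$ form a countable dense subset), and $(b)$ $Y$ is itself reflexive, since a closed subspace of a reflexive Banach space is reflexive (a direct consequence of the definition of reflexivity together with the Hahn--Banach theorem). Moreover, if I can exhibit a subsequence $\{x_{n_k}\}$ that converges weakly in $Y$ to some $y^*\in Y$, then it also converges weakly in $X$, because each functional $\varphi \in X^*$ restricts to an element of $Y^*$.

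Next I would work inside $Y$ and invoke Banach--Alaoglu. Because $Y$ is separable and reflexive, its dual $Y^*$ is also separable (this uses reflexivity to transfer separability from $Y$ to $Y^* = Y^{***}$ via a standard argument). By the Banach--Alaoglu theorem, the closed ball $B_{Y^{**}}(0,M)$ is compact in the weak-$*$ topology for any $M>0$. Since $Y^*$ is separable, the weak-$*$ topology restricted to this ball is metrizable, and therefore compactness upgrades to sequential compactness. I then identify $\{x_n\}$ with the bounded sequence $\{J(x_n)\}\subset Y^{**}$, where $J:Y\to Y^{**}$ is the canonical embedding, and extract a weak-$*$ convergent subsequence $J(x_{n_k})\to \xi$ in $Y^{**}$. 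Reflexivity of $Y$ gives $\xi = J(y^*)$ for some $y^*\in Y$, and the weak-$*$ convergence $J(x_{n_k})\to J(y^*)$ translates directly into weak convergence $x_{n_k}\rightharpoonup y^*$ in $Y$, hence in $X$ as observed above.

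The main obstacle in this program is the metrizability step: weak-$*$ compactness alone does not imply sequential compactness in general topological spaces, so one truly needs the separability of $Y^*$ to obtain a metric on bounded sets and thereby extract an actual subsequence rather than merely a subnet. Everything else is essentially bookkeeping once the two structural reductions (closed subspace is reflexive; separable reflexive implies separable dual) are in place. Since the paper cites this as a classical functional-analytic fact (Theorem 8.25 in [38]), I would in a polished writeup simply record the reduction to the separable case and refer to Banach--Alaoglu plus metrizability on bounded sets of a separable dual, rather than reproducing the full proof of Banach--Alaoglu.
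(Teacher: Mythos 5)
Your proposal is correct: it is the standard classical argument (reduce to the closed linear span, which is separable and reflexive; use that a separable reflexive space has separable dual; apply Banach--Alaoglu together with metrizability of the weak-$*$ topology on bounded sets to extract a subsequence; identify the weak-$*$ limit in $Y^{**}$ with an element of $Y$ by reflexivity, and note that weak convergence in $Y$ gives weak convergence in the ambient space). The paper itself gives no proof of Lemma 6.12 but simply cites it as a classical fact (Theorem 8.25 in [38]), so your writeup supplies exactly the argument the citation stands for; the only cosmetic point is that the separability transfer is cleanest stated as: $Y$ separable and reflexive implies $Y^{**}$ separable, and a space with separable dual is separable, applied to $Y^*$.
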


To provide the weak convergence theorem, first, it should be presented the following lemma in which the weak convergence of $\{f^n\}_{n=1}^{+\infty}$ to a fixed point of the function $\mathbf{T}$ (introduced in Definition 6.5) is proven: 
\begin{lemma}\label{6.13}
There exists $f^*\in\cH$ such that $\bT(f^*)=f^*$, $\bT^n(f^0)\ovs{weakly}{\lo}
f^*$.
\end{lemma}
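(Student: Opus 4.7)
The plan is to assemble this conclusion directly from the machinery already set up in Lemma~6.8 and Corollary~6.11, with Lemma~6.12 providing the only nontrivial input. Concretely, I would verify the three hypotheses of Corollary~6.11 for the map $\bT$ introduced in Definition~6.5 and the starting point $f^0\in\cH$ from Definition~6.3.

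First, I would observe that hypothesis (i) of Corollary~6.11, namely the non-expansivity $\|\bT(v)-\bT(v')\|\le \|v-v'\|$ for all $v,v'\in\cH$, is exactly Lemma~6.8~B. Next, hypothesis (ii), that $\|\bT^{n+1}(f^0)-\bT^n(f^0)\|\to 0$ as $n\to\infty$, is exactly Lemma~6.8~F (using $\bT^n(f^0)=f^n$ from Corollary~6.6). So the only work is to produce a weakly convergent subsequence of $\{f^n\}_{n\in\BN}$.

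For hypothesis (iii), I would combine Lemma~6.8~D with Lemma~6.12. Lemma~6.8~D states that there is $M>0$ with $\|f^n\|<M$ for every $n$, so $\{f^n\}_{n\in\BN}=\{\bT^n(f^0)\}_{n\in\BN}$ is a bounded sequence in the Hilbert space $\cH$. Since every Hilbert space is reflexive, Lemma~6.12 yields a subsequence $\{\bT^{n_k}(f^0)\}_{k=1}^{\infty}$ that converges weakly to some $f'\in\cH$, which is precisely hypothesis (iii).

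With all three hypotheses of Corollary~6.11 verified, I would apply that corollary to conclude that there exists $f^*\in\cH$ with $\bT(f^*)=f^*$ and $\bT^n(f^0)\ovs{weakly}{\lo}f^*$, which is the assertion of the lemma. There is no real obstacle here: all the analytic work (non-expansivity, the telescoping bound yielding $\|f^{n+1}-f^n\|\to 0$, and the crucial uniform bound on $\|f^n\|$ that uses the hypothesis $p_i\in[1,2]$ and $w_\ga\ge c>0$) has already been done in Lemma~6.8, and the weak compactness step is a standard citation. The proof is therefore essentially a short assembly argument.
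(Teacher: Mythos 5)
Your proposal is correct and follows essentially the same route as the paper: the paper's proof likewise cites Lemma 6.8 B and F for hypotheses (i) and (ii) of Corollary 6.11, and uses reflexivity of $\cH$ together with Lemma 6.12 and the uniform bound of Lemma 6.8 D for hypothesis (iii), then concludes by Corollary 6.11. No gaps.
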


\begin{proof}
Parts B and F of Lemma 6.8
satisfy conditions (i), (ii) of Corollary 6.11. On the 
other hand, since $\cH$ is a reflexive space, Lemma 6.12
and part D of Lemma 6.8 
satisfy condition (iii) of Corollary 6.11.
Therefore, Lemma 6.13 is proved, i.e. 
there exists $f^*\in\cH$ such that $\bT(f^*)=f^*$, $f^n=\bT^n(f^0) 
\ovs{weakly}{\lo} f^*$.
\end{proof}

The weak convergence theorem of the sequence $\{f^n\}_{n=1}^{+\infty}$ that comes below is a prerequisite for proving Theorem 6.15 that is the main result: 

\begin{theorem}\label{6.14} 
{\bf (Weak Convergence)} Make the same assumptions as in Lemma 6.8, then we have 

A) There exists $f^*\in\cH$ such that $f^n\ovs{weakly}{\lo} f^*$ and $f^*$ is 
a minimizer of $\Phi_{W, P}$.

B) If either there exists $i;~1\leq i\leq n$ such that $p_i>1$ or $N(K)=0$, 
then $\Phi_{W, P}$ has a unique minimizer $f^*$.
\end{theorem}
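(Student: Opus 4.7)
The plan is to get Part A almost for free by stringing together two results already in the paper, and then to handle the uniqueness in Part B by a midpoint/parallelogram argument that separates the two hypotheses $N(K)=\{0\}$ and ``some $p_i>1$''.

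For Part A I would argue as follows. Lemma 6.13 produces an $f^*\in\cH$ with $\bT(f^*)=f^*$ and $f^n=\bT^n(f^0)\ovs{weakly}{\lo}f^*$. Lemma 6.7 states that every fixed point of $\bT$ is a minimizer of $\Phi_{W,P}$, so $f^*$ is a minimizer. This closes Part A.

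For Part B, suppose $f$ and $g$ are both minimizers and set $h=(f+g)/2$. By convexity of $\Phi_{W,P}$, $h$ is also a minimizer, so $\Phi_{W,P}(h)=\tfrac12(\Phi_{W,P}(f)+\Phi_{W,P}(g))$. Applying the parallelogram identity to $\|K\cdot -g\|^2$ and the midpoint-convexity identities for each $|\cdot|^{p_j}$, this equality rearranges to
\[
\tfrac14\|K(f-g)\|^2 \;+\; \sum_{j=1}^n\sum_{\ga\in\Ga_j} w_\ga\Bigl[\tfrac12\bigl(|f_\ga|^{p_j}+|g_\ga|^{p_j}\bigr)-|h_\ga|^{p_j}\Bigr] = 0 .
\]
Both terms on the left are nonnegative, so each vanishes. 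The first gives $K(f-g)=0$, which already settles the case $N(K)=\{0\}$. In the case that some $p_i>1$, vanishing of the corresponding summand on $\Ga_i$ together with strict convexity of $t\mapsto |t|^{p_i}$ gives $f_\ga=g_\ga$ for every $\ga\in\Ga_i$.

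The main obstacle will be promoting this partial agreement to a global equality $f=g$ when some $p_j=1$ remain. The plan is to exploit that $f$ and $g$ are both fixed points of $\bT$ (they are minimizers, hence by the argument dual to Lemma 6.7 they must satisfy $\bT(x)=x$), combined with $K(f-g)=0$ so that $K^*(g-Kf)=K^*(g-Kg)$. For each $\ga\in\Ga_j$ with $j\ne i$ this reduces the fixed-point equations to $f_\ga = S_{w_\ga,p_j}(f_\ga+c_\ga)$ and $g_\ga = S_{w_\ga,p_j}(g_\ga+c_\ga)$ for a common $c_\ga$. When $p_j>1$ the strict monotonicity of $S_{w_\ga,p_j}$ forces $f_\ga=g_\ga$ immediately; for $p_j=1$ the soft-threshold $S_{w_\ga,1}$ has nontrivial fibres, and one has to combine the sign-agreement extracted from the vanishing midpoint term with the condition $K(f-g)=0$ to rule out residual disagreement. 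Isolating exactly which additional structural hypothesis is genuinely needed here (so that the statement really holds as written rather than requiring the stronger ``all $p_i>1$''), and writing the resulting case analysis cleanly, is the delicate part of the proof.
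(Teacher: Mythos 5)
Your Part A is exactly the paper's argument: Lemma 6.13 gives the weak limit $f^*$ as a fixed point of $\bT$, and Lemma 6.7 converts fixed points into minimizers of $\Phi_{W,P}$; nothing to add there.

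In Part B there is a genuine gap, and you have flagged it yourself. Your midpoint/parallelogram computation is correct and yields $K(f_1-f_2)=0$ (which settles the case $N(K)=\{0\}$) together with $(f_1)_\ga=(f_2)_\ga$ for every $\ga$ in a block with $p_i>1$; but the proposal never excludes two minimizers that differ only on coordinates with $p_j=1$, the difference lying in $N(K)$. Since that case is exactly what the hypothesis ``there exists $i$ with $p_i>1$'' does not control, the proof of B is not complete. You should also know that the paper's own proof does not close this case either: it simply asserts that the block-$i$ midpoint inequality ``becomes strict for one $p_i>1$'', which is true only when the two minimizers actually differ on that block --- precisely the situation your argument has already disposed of. In fact the residual case cannot be handled without extra hypotheses. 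Take $\cH=\cH'=\mathbb{R}^3$ with $\Ga_1=\{1\}$, $p_1=2$, $\Ga_2=\{2,3\}$, $p_2=1$, all weights $1$, let $e=(0,1,-1)$, let $P$ be the orthogonal projection onto $e^{\perp}$ and $K=\alpha P$ with small $\alpha>0$, and choose the data $g=\f{1+2\alpha^2}{2\alpha}(0,1,1)$. Then the convex optimality condition $0\in 2K^*(Kf-g)+\bigl(2f_1,\partial|f_2|,\partial|f_3|\bigr)$ is satisfied by $f=(0,1,1)$ and by every $f'=(0,1+t,1-t)$ with $0\le t\le 1$ (all have $Pf'=Pf$ and $|f'_2|+|f'_3|=2$), so $\Phi_{W,P}$ has a continuum of minimizers although $p_1=2>1$. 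So your soft-thresholding fibre analysis is pointing at a defect of the statement (uniqueness needs, e.g., all $p_i>1$, or injectivity of $K$ on the span of the $p_j=1$ coordinates), not at a missing trick; the only way to reproduce the paper's Part B is to make the same unqualified strict-convexity assertion it makes, which covers only minimizers that differ on the block with $p_i>1$.
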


\begin{proof}
Part (A) is a direct result of Lemma 6.7 and 6.13.
For part (B), note that the following 
inequality becomes strict for one $p_i>1$:
$$\sum_{\ga\in\Ga_i} w_\ga \left| (\f{f_1+f_2}{2})_\ga\right|^{p_i} \leq 
\f{\sum_{\ga\in\Ga_i} w_\ga|(f_1)_\ga|^{p_i}+ \sum_{\ga\in\Ga_i} 
w_\ga|(f_2)_\ga|^{p_i}}{2}.$$
Consequently
$$\sum_{i=1}^n |\!|\!|\f{f_1+f_2}{2}|\!|\!|^{p_i}_{W_i,p_i} < \f{\sum_{i=1}^n 
|\!|\!|f_1|\!|\!|^{p_i}_{W_i,p_i} +\sum_{i=1}^n 
|\!|\!|f_2|\!|\!|^{p_i}_{W_i,p_i}}{2}.$$
Then, we have
$$|\!|\!| \f{f_1+f_2}{2} |\!|\!|^{P}_{W, P} < 
\f{|\!|\!|f_1|\!|\!|^{P}_{W, P}+|\!|\!|f_2|\!|\!|^{P}_{W, P}}{2}.$$
\end{proof}
Now, we want to prove that $\{f^n\}_{n=1}^{+\infty}$ converges to $f^*$ (the minimizer of (6.3) whose existence clarified in Lemma 6.13) 
in the $\cH$-norm, i.e.
$$\lim_{k\rig\infty} \|v-v_k\|=0.$$\\
  It will be done in the below theorem. For simplicity, the following shorthand notations are used: 
$$f^*=w-\lim f^n,\quad u_n=f^n-f^*,\quad h=f^*+K^*(g-Kf^*).$$

\begin{theorem}\label{6.15}
{\bf (Strong Convergence)} Considering all assumptions of Lemma 6.8, we have 

A) $\|Ku^n\|\lo 0 \quad \text{for} \quad n\lo\infty$.

B) $\|\bS_{W, P}(h+u^n)-\bS_{W, P}(h)-u^n\|\lo 0 \quad \text{for} \quad n\lo\infty$.

C) If for some $a\in\cH$, and some sequence $(v^n)_{n\in\BN}$, 
$w-\lim_{n\rig\infty} v^n=0$ and $\lim_{n\rig\infty}\|\bS_{W, P}(a+v^n) 
-\bS_{W, P}(a)-v^n\|=0$ then $\|v^n\|\lo 0 \quad \text{for} \quad n\lo\infty$.

D) $\|u^n\|\lo 0 \quad \text{for} \quad n\lo\infty$.
\end{theorem}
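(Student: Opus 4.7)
The plan is to execute the four parts in order, following the architecture of the single-constraint proof in [17] but adapted to the multi-constraint surrogate functional from Proposition 6.2. The ultimate goal is Part (D), which will follow immediately once Parts (B) and (C) are in hand; Part (A) is the warm-up that feeds Part (B), and Part (C) is an abstract ``strong convergence from near-identity on $\bS_{W,P}$-displacements'' statement that does the real analytic work.

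For Part (A), I combine the monotone decrease of $\Phi_{W,P}(f^n)$ (Lemma 6.8(C)) with Lemma 6.7 and Lemma 6.13 (so that $f^*$ is a minimizer) to conclude $\Phi_{W,P}(f^n)\to\Phi_{W,P}(f^*)$. Both summands $\|Kf-g\|^2$ (via boundedness of $K$) and $|||f|||^P_{W,P}=\sum_i |||f|||^{p_i}_{W_i,p_i}$ (a sum of convex continuous functionals) are weakly lower semi-continuous, so $f^n\rightharpoonup f^*$ forces each summand evaluated at $f^*$ to lie below the corresponding $\liminf$. Combined with convergence of the sum, each summand must converge separately; in particular $\|Kf^n-g\|\to\|Kf^*-g\|$. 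Weak convergence plus norm convergence in the Hilbert space $\cH'$ is strong convergence, so $\|Ku^n\|=\|K(f^n-f^*)\|\to 0$.

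For Part (B), I use the identity $f^*=\bS_{W,P}(h)$ (this is $\bT(f^*)=f^*$ expanded) to rewrite the recursion as $u^{n+1}=\bS_{W,P}(h+u^n-K^*Ku^n)-\bS_{W,P}(h)$, which gives the telescoping decomposition
\begin{align*}
\bS_{W,P}(h+u^n)-\bS_{W,P}(h)-u^n
&=\bigl[\bS_{W,P}(h+u^n)-\bS_{W,P}(h+u^n-K^*Ku^n)\bigr]\\
&\quad+\bigl[u^{n+1}-u^n\bigr].
\end{align*}
By non-expansiveness of $\bS_{W,P}$ (Lemma 6.8(A)), the first bracket has norm at most $\|K^*\|\,\|Ku^n\|$, which tends to zero by Part (A); the second bracket is $f^{n+1}-f^n$, whose norm tends to zero by Lemma 6.8(F).

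Part (C) is the main obstacle and carries all the fine coordinate-wise analysis. Writing $\delta^n_\ga := S_{w_\ga,p_\ga}(a_\ga+v^n_\ga)-S_{w_\ga,p_\ga}(a_\ga)-v^n_\ga$, the hypothesis is $\sum_\ga|\delta^n_\ga|^2\to 0$, while weak convergence gives both $v^n_\ga\to 0$ for each fixed $\ga$ and $\sup_n\|v^n\|<\infty$. Fix $\epsilon>0$; since $a\in\cH$, the set $F_\epsilon=\{\ga:|a_\ga|\geq\epsilon\}$ is finite, so $\sum_{\ga\in F_\epsilon}|v^n_\ga|^2\to 0$ by componentwise convergence on a finite set. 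For the tail $\ga\notin F_\epsilon$, I will exploit the explicit form of $S_{w,p}$ from Lemma 6.1: whenever $|v^n_\ga|$ is below a threshold determined by $c$ and $\epsilon$, both $a_\ga$ and $a_\ga+v^n_\ga$ lie in the inactive regime of $S_{w_\ga,p_\ga}$ (for $p=1$ giving $S\equiv 0$, and for $p>1$ giving $|S(x)|\leq\kappa(\epsilon)|x|$ with $\kappa(\epsilon)\to 0$), which forces $\delta^n_\ga$ to be essentially equal to $-v^n_\ga$ and hence bounds $|v^n_\ga|^2$ by $|\delta^n_\ga|^2$ up to a controllable error. The remaining ``large'' indices (where $|v^n_\ga|$ exceeds the threshold) are few in number -- at most $\|v^n\|^2$ divided by the threshold squared -- and their contribution is absorbed using firm non-expansiveness of $\bS_{W,P}$. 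Making this tail control quantitative, via a careful case split on the sign patterns of $a_\ga$ and $a_\ga+v^n_\ga$ relative to the inactive region, is where the main difficulty lies, exactly as in [17]. Part (D) is then immediate: apply Part (C) with $a=h$ and $v^n=u^n$, using Lemma 6.13 for $u^n\rightharpoonup 0$ and Part (B) for $\|\bS_{W,P}(h+u^n)-\bS_{W,P}(h)-u^n\|\to 0$.
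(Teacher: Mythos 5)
Your overall architecture coincides with the paper's: part B is exactly the argument the paper imports from [17] (the fixed-point identity $f^*=\bS_{W, P}(h)$, the split $\bS_{W, P}(h+u^n)-\bS_{W, P}(h)-u^n=[\bS_{W, P}(h+u^n)-\bS_{W, P}(h+u^n-K^*Ku^n)]+[u^{n+1}-u^n]$, non-expansiveness, and Lemma 6.8(F)); part D is deduced from B and C just as in the paper; and your component-wise sketch for part C is in substance the paper's proof, which splits $\Ga$ into the blocks with $p_i>1$ and $p_i=1$ and quotes the corresponding estimates of [17]. The genuine gap is in part A. You infer $\Phi_{W, P}(f^n)\to\Phi_{W, P}(f^*)$ from monotonicity (Lemma 6.8(C)) plus the fact that $f^*$ is a minimizer (Lemmas 6.7 and 6.13). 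That inference does not hold: monotonicity gives $\Phi_{W, P}(f^n)\downarrow L$ with $L\ge\Phi_{W, P}(f^*)$, and weak lower semicontinuity along $f^n\rightharpoonup f^*$ gives once more $\Phi_{W, P}(f^*)\le L$; both inequalities point the same way, so nothing you cite rules out $L>\Phi_{W, P}(f^*)$, and without value convergence your ``each summand converges separately, then Radon--Riesz'' chain never starts. The fact you need is true, but it requires the quadratic-growth inequality of Proposition 6.2(B) (with $a=f^n$ and $h=f^*-f^{n+1}$ it gives $\Phi_{W, P}(f^{n+1})+\|u^{n+1}\|^2\le\Phi_{W, P}(f^*)+\|u^n\|^2-\|Ku^n\|^2$), or, more directly, the route of [17] that the paper intends: from $u^{n+1}=\bS_{W, P}(h+(I-K^*K)u^n)-\bS_{W, P}(h)$, Lemma 6.8(A) and $\|K\|<1$ give $\|u^{n+1}\|^2\le\|(I-K^*K)u^n\|^2\le\|u^n\|^2-\|Ku^n\|^2$, hence $\sum_n\|Ku^n\|^2\le\|u^0\|^2<\infty$ and part A follows with no appeal to value convergence at all.

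A secondary point on part C: the device you invoke for the ``large'' tail indices --- absorbing their contribution ``using firm non-expansiveness of $\bS_{W, P}$'' --- is neither established in the paper (Lemma 6.8(A) only gives plain non-expansiveness) nor the mechanism used in [17]. For blocks with $p_i=1$ no absorption is needed: if $|a_\ga|\le\eps<c/2$, the explicit form of $S_{w_\ga,1}$ shows that for each such $\ga$ either $\delta^n_\ga=-v^n_\ga$ (when $|a_\ga+v^n_\ga|\le w_\ga/2$) or $|\delta^n_\ga|\ge w_\ga/2-\eps\ge c/2-\eps$ (when the threshold is crossed); since $\|\delta^n\|\to0$, the second alternative is excluded for all large $n$, so the whole tail is bounded by $\|\delta^n\|^2$. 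For blocks with $p_i>1$ no size split is needed at all: $\sup_n\|v^n\|<\infty$ puts all arguments in a fixed bounded set on which $S_{w_\ga,p_i}$ is a uniform contraction (because $F'_{w_\ga,p_i}(t)=1+\f{w_\ga p_i(p_i-1)}{2}|t|^{p_i-2}$ is bounded away from $1$ there, uniformly in $\ga$ via $w_\ga\ge c$), whence $|v^n_\ga|\le(1-\kappa)^{-1}|\delta^n_\ga|$ for every $\ga\in\Ga_i$. Since you, like the paper, ultimately defer this quantitative work to [17], this is a matter of aligning your sketch with the estimates actually available there rather than a second independent gap; the one step you must repair is part A.
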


\begin{proof}
Parts A and B have similar proofs to corresponding lemmas in [17]. Part D is a direct result of parts B and C. For part C, suppose $D_1=\{i|~ p_i>1\}$ and $D_2=\{i|~ p_i=1\}$. Consequently 
$$D_1\cup D_2=\{1,2,\dots,n\}.$$
To prove this part, it is sufficient to consider the two distinct cases $i\in 
D_1$ and $i\in D_2$, as has been done for $p>1$ and $p=1$, respectively, in [17]. 
We will have:
\begin{align*}
&\forall i\in D_1: \quad \sum_{\ga\in\Ga_i} |v_\ga^n|^2\lo 0\quad as\quad 
n\lo\infty,\\
&\forall i\in D_2: \quad \sum_{\ga\in\Ga_i} |v_\ga^n|^2\lo 0\quad as\quad 
n\lo\infty.
\end{align*}
Then 
$$\|v^n\|^2=\sum_{\ga\in\Ga}|v^n_\ga|^2=\sum_{i=1}^n \sum_{\ga\in\Ga_i} 
|v_\ga^n|^2\lo 0\quad as \quad n\lo\infty.$$
\end{proof}

Until now, we have found a sequence that converges to a minimizer for (6.3). In the remainder, by Theorem 6.19, we will also show that this minimizer is acceptable as a regularized solution of the (possibly ill-posed) inverse problem $Kf=g$.\\
To that point, we need to present the proposition 6.18 which comes later. Its proof is similar to what has been done in Theorem 4.1 in [17]. However, it should be clarified that Lemma 4.3 in [17] that has a key role in this theorem will remain true by replacing $|||f|||_{W,p}^{p}$ with $|\!|\!|v|\!|\!|
^{P}_{W, P}$. Below, you can find the very lemma:
\begin{lemma}\label{6.16}
If the sequence of vectors $(v_k)_{k\in\BN}$ converges weakly in $\cH$ to 
$v$, and $\lim_{k\rig\infty} |\!|\!|v_k|\!|\!|_{W, P}^{P}=|\!|\!|v|\!|\!|
^{P}_{W, P}$, 
then $(v_k)_{k\in\BN}$ converges to $v$ in the $\cH$-norm, i.e.
$$\lim_{k\rig\infty} \|v-v_k\|=0.$$
\end{lemma}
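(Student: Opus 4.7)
The plan is to reduce Lemma 6.16 to its single-exponent counterpart (Lemma 4.3 of [17]) applied separately on each index block $\Gamma_i$, after first upgrading convergence of the \emph{total} seminorm $|\!|\!|\cdot|\!|\!|^P_{W,P}$ into convergence of each block seminorm $|\!|\!|\cdot|\!|\!|^{p_i}_{W_i,p_i}$. Write $P_i$ for the orthogonal projection onto the closed span $\cH_i$ of $\{\varphi_\gamma\}_{\gamma\in\Gamma_i}$; since $\Gamma=\Gamma_1\cup\cdots\cup\Gamma_n$ is a disjoint union and $(\varphi_\gamma)_{\gamma\in\Gamma}$ is an orthonormal basis for $\cH$, we have the Pythagorean decomposition $\|u\|^2=\sum_{i=1}^n\|P_iu\|^2$ and the splitting $|\!|\!|v|\!|\!|^P_{W,P}=\sum_{i=1}^n|\!|\!|P_iv|\!|\!|^{p_i}_{W_i,p_i}$.

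First I would observe that weak convergence $v_k\rightharpoonup v$ in $\cH$ yields $\langle v_k,\varphi_\gamma\rangle\to\langle v,\varphi_\gamma\rangle$ for every $\gamma\in\Gamma$, hence $w_\gamma|(v_k)_\gamma|^{p_i}\to w_\gamma|v_\gamma|^{p_i}$ for each $\gamma\in\Gamma_i$. Applying Fatou's lemma on $\Gamma_i$ (counting measure with weights $w_\gamma$) gives
\begin{equation*}
|\!|\!|v|\!|\!|^{p_i}_{W_i,p_i}\leq \liminf_{k\to\infty}|\!|\!|v_k|\!|\!|^{p_i}_{W_i,p_i}\qquad\text{for every }i=1,\ldots,n.
\end{equation*}

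The key step is to promote these $n$ lower-semicontinuity inequalities into equalities using the hypothesis that the \emph{sum} converges. Set $a_{k,i}:=|\!|\!|v_k|\!|\!|^{p_i}_{W_i,p_i}$ and $a_i:=|\!|\!|v|\!|\!|^{p_i}_{W_i,p_i}$, so $a_i\leq\liminf_k a_{k,i}$ and, by hypothesis, $\sum_{i=1}^n a_{k,i}\to\sum_{i=1}^n a_i$. For a fixed $i_0$,
\begin{equation*}
\limsup_{k}a_{k,i_0}=\limsup_{k}\Bigl[\sum_{i=1}^n a_{k,i}-\sum_{i\neq i_0}a_{k,i}\Bigr]\leq \sum_{i=1}^n a_i-\sum_{i\neq i_0}\liminf_{k}a_{k,i}\leq \sum_{i=1}^n a_i-\sum_{i\neq i_0}a_i=a_{i_0},
\end{equation*}
where I used the finiteness of the index set so that $\liminf$ is superadditive. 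Combining with the earlier inequality gives $\lim_k a_{k,i_0}=a_{i_0}$, i.e.\ $|\!|\!|v_k|\!|\!|^{p_i}_{W_i,p_i}\to|\!|\!|v|\!|\!|^{p_i}_{W_i,p_i}$ for every $i$.

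Finally, since $v_k\rightharpoonup v$ in $\cH$ implies $P_iv_k\rightharpoonup P_iv$ in the Hilbert space $\cH_i$ (orthogonal projections are weakly continuous), and since $|\!|\!|P_iv_k|\!|\!|^{p_i}_{W_i,p_i}=|\!|\!|v_k|\!|\!|^{p_i}_{W_i,p_i}\to|\!|\!|P_iv|\!|\!|^{p_i}_{W_i,p_i}$, Lemma 4.3 of [17] (the single-$p$ case) applies on each $\cH_i$ and delivers $\|P_iv_k-P_iv\|\to 0$. Summing using $\|v_k-v\|^2=\sum_{i=1}^n\|P_iv_k-P_iv\|^2$ completes the proof. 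The main obstacle is the promotion of total-norm convergence to block-wise convergence; once that is in hand, everything reduces to the known single-exponent result.
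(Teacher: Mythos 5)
Your proof is correct, but it follows a genuinely different route from the paper's. The paper does not decompose $\cH$ into blocks at all: it adapts the proof of Lemma 4.3 of [17] directly to the mixed-exponent setting, reducing everything to showing $\lim_k\|v_k\|=\|v\|$ (which, with weak convergence, gives strong convergence by the standard Hilbert-space identity), and it gets norm convergence from the elementary inequality $|a^r-b^r|\leq r|a-b|\max\{a,b\}^{r-1}$, the uniform bound $w_\gamma\geq c$, the auxiliary sequence $u_{k,\gamma}=\min(|v_{k,\gamma}|,|v_\gamma|)$ and dominated convergence, using the identity $\bigl||v_{k,\gamma}|^{p_\gamma}-|v_\gamma|^{p_\gamma}\bigr|=|v_{k,\gamma}|^{p_\gamma}+|v_\gamma|^{p_\gamma}-2u_{k,\gamma}^{p_\gamma}$ so that the hypothesis on the total seminorm can be applied at once. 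You instead keep the single-exponent result of [17] as a black box and add one new ingredient: the promotion of convergence of the total seminorm to convergence of each block seminorm via Fatou on each $\Gamma_i$ plus the finite superadditivity of $\liminf$, after which weak continuity of the projections $P_i$ and the Pythagorean splitting finish the job. Your limsup/liminf bookkeeping is sound (the finiteness of the $a_{k,i}$ and $a_i$ needed for the subtraction is guaranteed in the paper's setting, and is equally implicit in the paper's own use of dominated convergence). What each approach buys: yours is modular and makes transparent that the mixed-norm statement is a purely combinatorial consequence of the single-$p$ lemma over a finite partition of $\Gamma$; the paper's is self-contained and handles all exponents simultaneously without needing the blockwise upgrade, which is why the author describes the lemma as [17]'s Lemma 4.3 "with a minor modification."
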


\begin{proof}
It is a standard result that if $w-\lim_{k\rig\infty} v_k=v$ and 
$\lim_{k\rig\infty} \|v_k\|=\|v\|$, then 
$$\lim_{k\rig\infty} \|v-v_k\|^2 =\lim_{k\rig\infty} (\|v\|^2+ 
\|v_k\|^2-2<v,v_k>) = (\|v\|^2+\|v\|^2-2<v,v>)=0.$$
We thus need to prove only that $\lim_{k\rig\infty} \|v_k\|=\|v\|$. 

Since, the $v_k$ converge weakly, they are uniformly bounded. It follows that 
the $|v_{k,\ga}|=|<v_k,\var_\ga>|$ are bounded uniformly in $k$ and $\ga$ by 
some finite number $M$. For $a,b>0$ and $1\leq r\leq 2$, we have 
$$|a^r-b^r| \leq r|a-b| \Max\{a,b\}^{r-1}.$$
Consequently 
$$\|v_{k,\ga}|^2-|v_\ga|^2 | = |(|v_{k,\ga}|^{p_\ga})^{\f{2}{p_\ga}} 
-(|v_\ga|^{p_\ga})^{\f{2}{p_\ga}}| \leq \f{2}{p_\ga} M^{2-p_\ga} 
\|v_{k,\ga}|^{p_\ga}-|v_\ga|^{p_\ga}|.$$
Now, let $M':=\Max\{\f{2}{p_i} M^{2-p_i};~ 1\leq i\leq n\}$. Then
\begin{align*} 
\forall \ga\in\Ga,~ \forall k\in N:~ ||v_{k,\ga}|^2-|v_\ga|^2| \leq 
M'||v_{k,\ga}|^{p_\ga}-|v_\ga|^{p_\ga}|.\tag{$6.10$}
\end{align*}
Since the $v_k$ convergence weakly, we have
\begin{align*}
\forall& \ga\in\Ga; ~<v_k,\var_\ga> \lo <v,\var_\ga> \quad as \quad k\lo\infty.\\
\Lo& \forall \ga\in\Ga ;~ |v_{k,\ga}| \lo |v_\ga| \quad as \quad k\lo\infty.
\end{align*}
Define now $u_{k,\ga}=\min(|v_{k,\ga}|,|v_\ga|)$. Clearly $\forall \ga\in\Ga$: 
$\lim_{k\rig\infty} u_{k,\ga}=|v_\ga|$; since $\sum_{\ga\in\Ga_i} 
w_\ga|v_\ga|^{p_i}<\infty$, it follows by the dominated convergence theorem 
that\\ 
$\lim_{k\rig\infty} \sum_{\ga\in\Ga_i}w_\ga u^{p_i}_{k,\ga}=\sum_{\ga\in\Ga_i} w_\ga|v_\ga|^{p_i}$, consequently
\begin{align*}
\lim_{k\rig\infty} \sum_{\ga\in\Ga} w_\ga u_{k,\ga}^{p_\ga} &=
\lim_{k\rig\infty} \sum_{i=1}^n\sum_{\ga\in\Ga_i}w_\ga u_{k,\ga}^{p_i}\\
&=\sum_{i=1}^n \lim_{k\rig\infty}\sum_{\ga\in\Ga_i}w_\ga u_{k,\ga}^{p_i}\\
&=\sum_{i=1}^n\sum_{\ga\in\Ga_i} w_\ga|v_\ga|^{p_i} \\
&=\sum_{\ga\in\Ga} w_\ga |v_\ga|^{p_\ga}. \tag{$6.11$}
\end{align*}
On the other hand, by (6.10) we have
\begin{align*}
| \|v_k\|^2-\|v\|^2|
& = \left| \sum_{\ga\in\Ga} |v_{k,\ga}|^2-\sum_{\ga\in\Ga}|v_\ga|^2\right| \\
&\leq \sum_{\ga\in\Ga} | |v_{k,\ga}|^2-|v_\ga|^2| \\
&\leq \f{M'}{c} \sum_{\ga\in\Ga} w_\ga | |v_{k,\ga}|^{p_\ga} - 
|v_\ga|^{p_\ga}|\\
&=\f{M'}{c} \left( \sum_{\ga\in\Ga} w_\ga |v_{k,\ga}|^{p_\ga} 
+\sum_{\ga\in\Ga} w_\ga | v_\ga|^{p_\ga} -2\sum_{\ga\in\Ga} w_\ga 
u_{k,\ga}^{p_\ga}\right).
\end{align*}
Since we have (6.11), the last expression tends to 0 as $k$ tends to $\infty$.
\end{proof}

The following existence lemma is necessary for providing Proposition 6.18 and 
Theorem 6.19.
\begin{lemma}\label{6.17}
Suppose $S=N(K)+f_0=\{f: K(f)=K(f_0)\}$ and assume that either there exists 
$j$ such that $p_j>1$ or $N(K)=\{0\}$. Then there is a
unique minimal element 
with regard to $|\!|\!|\cdot|\!|\!|^{P}_{W, P}$ in $S$.
\end{lemma}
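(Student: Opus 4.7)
The plan is to combine the direct method of the calculus of variations for existence with a strict-convexity argument for uniqueness. The objective $|||\cdot|||^{P}_{W,P}$ is convex, its sublevel sets already control the $\mathcal{H}$-norm by the computation in part D of Lemma 6.8, and the feasible set $S=f_0+N(K)$ is closed and convex, so all the structural hypotheses are in place.

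For existence I would pick a minimizing sequence $\{f^k\}\subset S$. Boundedness of $|||f^k|||^{P}_{W,P}$ forces each $|||f^k|||_{W_i,p_i}$ to be bounded, and the chain of estimates (6.7)--(6.9) from the proof of Lemma 6.8(D), which exploits $p_i\in[1,2]$ and the uniform lower bound $w_\gamma\ge c$, then yields a uniform $\mathcal{H}$-bound on $\{f^k\}$. By reflexivity of $\mathcal{H}$ and Lemma 6.12, a subsequence $f^{k_m}$ converges weakly to some $f^*\in\mathcal{H}$. Since $N(K)$ is a closed linear subspace, hence weakly closed, the affine set $S=f_0+N(K)$ is weakly closed and $f^*\in S$. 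Coordinatewise weak convergence $(f^{k_m})_\gamma\to (f^*)_\gamma$ together with Fatou's lemma applied to the nonnegative sum $\sum_\gamma w_\gamma|\cdot|^{p_\gamma}$ gives
\begin{equation*}
|||f^*|||^{P}_{W,P}\;\le\;\liminf_{m\to\infty}|||f^{k_m}|||^{P}_{W,P}\;=\;\inf_{f\in S}|||f|||^{P}_{W,P},
\end{equation*}
so $f^*$ is a minimizer. (As a by-product, Lemma 6.16 promotes this weak convergence to norm convergence.)

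For uniqueness, the case $N(K)=\{0\}$ is trivial since then $S=\{f_0\}$. In the other case, assume $p_j>1$ for some $j$ and that $f_1,f_2\in S$ both minimize $|||\cdot|||^{P}_{W,P}$. Then $(f_1+f_2)/2\in S$, and the strict convexity of $x\mapsto|x|^{p_j}$ together with ordinary convexity of $|x|^{p_i}$ for $i\ne j$, applied exactly as in the proof of Theorem 6.14(B), gives
\begin{equation*}
|||\tfrac{f_1+f_2}{2}|||^{P}_{W,P}\;\le\;\tfrac{1}{2}\bigl(|||f_1|||^{P}_{W,P}+|||f_2|||^{P}_{W,P}\bigr)
\end{equation*}
with strict inequality unless $(f_1)_\gamma=(f_2)_\gamma$ for every $\gamma\in\Gamma_j$, which would contradict minimality; hence $f_1$ and $f_2$ agree on $\Gamma_j$.

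The main obstacle is closing the uniqueness argument on the coordinates $\gamma\in\Gamma_i$ with $p_i=1$, where strict convexity is unavailable. My plan here is to set $h:=f_2-f_1\in N(K)$ and exploit that $h_\gamma=0$ for every $\gamma\in\Gamma_j$, so the affine line $\{f_1+th\}_{t\in[0,1]}\subset S$ sees the $\Gamma_j$-contribution to $|||\cdot|||^{P}_{W,P}$ frozen while the remaining contributions are sums of functions of the form $t\mapsto w_\gamma|(f_1)_\gamma+th_\gamma|$, each piecewise affine. Minimality at $t=0$ and $t=1$, combined with convexity, forces this whole line to consist of minimizers, and then a perturbation in $t$ outside $[0,1]$ (keeping the perturbation in $S$ by linearity of $N(K)$) together with Lemma 6.16 pins down $h=0$. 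This is the delicate step where the hypothesis $p_j>1$ must be leveraged beyond the $\Gamma_j$ coordinates.
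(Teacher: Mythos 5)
Your uniqueness argument has a genuine gap, and it sits exactly at the step you yourself flag as ``delicate.'' After the strict-convexity step you only know that two minimizers $f_1,f_2$ agree on the block $\Gamma_j$ with $p_j>1$, and your plan for the remaining coordinates does not close: with $h=f_2-f_1\in N(K)$ supported on the $p_i=1$ blocks, convexity indeed shows the whole segment $\{f_1+th:\ t\in[0,1]\}$ consists of minimizers, but letting $t$ leave $[0,1]$ produces no contradiction (the value $t\mapsto|\!|\!|f_1+th|\!|\!|^{P}_{W,P}$ can be constant on $[0,1]$ and increase outside it), and Lemma 6.16 --- weak convergence plus convergence of the $|\!|\!|\cdot|\!|\!|^{P}_{W,P}$-values implies norm convergence --- has no bearing on this situation. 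In fact no argument can rule this configuration out under the stated hypothesis: take $\mathcal{H}=\mathbb{R}^3$ with the standard basis, $\Gamma_1=\{1\}$, $p_1=2$, $\Gamma_2=\{2,3\}$, $p_2=1$, all weights $1$, $K(x_1,x_2,x_3)=\tfrac12(x_1,x_2+x_3,0)$ (so $\|K\|<1$) and $f_0=(0,1,0)$; then $S=\{(0,1-t,t):\ t\in\mathbb{R}\}$ and every point with $t\in[0,1]$ minimizes $|\!|\!|\cdot|\!|\!|^{P}_{W,P}$ on $S$, even though some $p_j>1$. For comparison, the paper's proof of uniqueness simply invokes the strict inequality $|\!|\!|\tfrac{f_1+f_2}{2}|\!|\!|^{p_j}_{W_j,p_j}<\tfrac12\bigl(|\!|\!|f_1|\!|\!|^{p_j}_{W_j,p_j}+|\!|\!|f_2|\!|\!|^{p_j}_{W_j,p_j}\bigr)$, which tacitly presupposes that the two minimizers differ somewhere on $\Gamma_j$ --- precisely the case you could not reach. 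So you were right to worry, but the repair you sketch cannot work; uniqueness in the mixed case needs a stronger hypothesis (e.g.\ $p_i>1$ for every block, or $N(K)=\{0\}$, or some condition preventing $N(K)$ from containing nonzero elements supported on the $p_i=1$ blocks), not a cleverer perturbation argument.

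Your existence argument, on the other hand, is correct and follows a genuinely different route from the paper's. You run the direct method: a minimizing sequence in $S$, the estimates (6.7)--(6.9) from Lemma 6.8(D) to get a uniform $\mathcal{H}$-bound, reflexivity and Lemma 6.12 for a weakly convergent subsequence, weak closedness of the affine set $f_0+N(K)$, and coordinatewise Fatou for weak lower semicontinuity of $|\!|\!|\cdot|\!|\!|^{P}_{W,P}$. The paper instead builds the minimizer blockwise: it takes, for each $i$, an element of $S$ of minimal $|\!|\!|\cdot|\!|\!|_{W_i,p_i}$-norm and glues these blocks into $f^{\dagger}$, exploiting that $|\!|\!|\cdot|\!|\!|^{P}_{W,P}$ splits as a sum over the $\Gamma_i$. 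Your version is self-contained and checks membership of the limit in $S$; the paper's is shorter but presupposes the blockwise minimizers exist and that the glued element lies in $S$, neither of which it verifies.
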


\begin{proof}
{\bf uniqueness:} If
$N(K)=0$, then $S=\{f_0\}$. For the case where there exists $j$ such that 
$p_j>1$, suppose $f_1,f_2$ are minimal elements with regard to 
$|\!|\!|\cdot|\!|\!|^{P}_{W, P}$ in $S$ and $f_1\neq f_2$;
consequently $|\!|\!|f_1|\!|\!|^{P}_{W, P}=|\!|\!|f_2|\!|\!|_{W, P}^{P}$. Since for $p_j>1$
\begin{align*}
|\!|\!| \f{f_1+f_2}{2} |\!|\!|^{p_j}_{W_j,p_j}& < 
\f{|\!|\!|f_1|\!|\!|^{p_j}_{W_j,p_j}+|\!|\!|f_2|\!|\!|^{p_j}_{W_j,p_j}}{2},
\end{align*}
we have
\begin{align*}
|\!|\!|f_1|\!|\!|^{P}_{W, P} & \leq 
|\!|\!| \f{f_1+f_2}{2}|\!|\!|^{P}_{W, P} = 
|\!|\!| \f{f_1+f_2}{2}|\!|\!|^{p_1}_{W_1,p_1} +\dots+
|\!|\!| \f{f_1+f_2}{2}|\!|\!|^{p_n}_{W_n,p_n} \\
&< \f{|\!|\!|f_1|\!|\!|^{P}_{W, P} +|\!|\!|f_2|\!|\!|^{P}_{W, P}}{2} \\
&=|\!|\!| f_2|\!|\!|^{P}_{W, P}
\end{align*}
which is a contradiction.\\
{\bf Existence:}
Note that $|\!|\!|\cdot|\!|\!|^{P}_{W, P}$ is not a norm, but 
$|\!|\!|\cdot|\!|\!|_{W_i,p_i}$ 
are norms for $1\leq i\leq n$. Suppose $f_i$ is the element of minimum 
$|\!|\!|\cdot|\!|\!|_{W_i,p_i}$-norm in $S$. Now, let $f^\dagger=\sum_{i=1}^n \sum_{\ga\in\Ga_i} <f_i,\var_\ga>\var_\ga$,
then we have
\begin{align*}
|\!|\!|f^\dagger |\!|\!|^{P}_{W, P} &=|\!|\!| 
f^\dagger|\!|\!|^{p_1}_{W_1,p_1} 
+\dots+|\!|\!|f^\dagger|\!|\!|^{p_n}_{W_n,p_n} \\
&=|\!|\!|f_1|\!|\!|^{p_1}_{W_1,p_1} +\dots+|\!|\!| f_n|\!|\!|^{p_n}_{W_n,p_n}
\end{align*}
because $<f^\dagger,\var_\ga>=<f_i,\var_\ga>$ for $\ga\in\Ga_i$. Consequently, 
$f^\dagger$ is a minimum with regard to $|\!|\!|\cdot|\!|\!|^{P}_{W, P}$ in 
$S$.
\end{proof}

\begin{proposition}\label{6.18}
Assume that $K$ is a bounded operator from $\cH$ to $\cH'$ with $\|K\|<1$,
$(\var_\ga)_{\ga\in\Ga}$ is an orthonormal basis for $\cH$, and 
$W=(w_\ga)_{\ga\in\Ga}$ is a sequence such that $\forall \ga\in\Ga: ~ w_\ga>c>0$. 
Let $\Ga=\Ga_1\cup\dots\cup \Ga_n,~ W_i=(w_\ga)_{\ga\in\Ga_i}$, 
$P=\{p_1,\dots,p_n\}$ such that $1\leq p_i\leq 2$ for $1\leq i\leq n$. 
Suppose $g$ is an element of $\cH'$, $\al=(\al_1,\al_2,\dots,\al_n)$ such that 
$\al_i\geq 0$ for $1\leq i\leq n$, and that either there exists $j$ such 
that $p_j>1$ or $N(K)=\{0\}$. Define the functional $\Phi_{\al,W, P;g}$ on 
$\cH$ by 
$$\Phi_{\al,W, P;g}(f)=\|Kf-g\|^2+ \al_1|\!|\!|f|\!|\!|^{p_1}_{W_1,p_1}+ 
\al_2|\!|\!|f|\!|\!|^{p_2}_{W_2,p_2} +\dots+\al_n|\!|\!|f|\!|\!|^{p_n}_{W_n,p_n}.
$$
Also suppose $f^*_{\al,W, P;g}$ is the minimizer of the functional 
$\Phi_{\al,W, P;g},~ f_0\in\cH,~ S=N(K)+f_0=\{f|~K(f)=K(f_0)\}$ 
and $f^\dagger$ is the unique minimal element with regard to 
$|\!|\!|\cdot|\!|\!|^{P}_{W, P}$ in 
$S$. Let $\{\eps_t\}_{t=1}^\infty$ be a sequence of positive 
numbers convergent to 0
and $\al(\eps_t)=(\al_1(\eps_t),\dots,\al_n(\eps_t))$ such that 
$\lim_{t\rig+\infty} \al_i(\eps_t)=0$, $\lim_{t\rig+\infty} 
\f{\eps_t^2}{\al_i(\eps_t)} =0$, $\lim_{t\rig+\infty} 
\f{\al_i(\eps_t)}{\al_j(\eps_t)}=1$ for every $i,j$;~ $1\leq i,j\leq n$.
Suppose $\{g_n\}_{n=1}^{+\infty} \subs\cH'$ is a sequence such that, for every 
$n$, $\|g_n-Kf_0\|<\eps_n$. 
Then we have 
$$\|f^*_{\al(\eps_n);g_n}- f^\dagger\| \lo 0\quad as \quad n\lo\infty.$$
\end{proposition}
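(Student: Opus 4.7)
The plan is to follow the classical Tikhonov-type regularization convergence argument, adapted to the multi-penalty setting used here and already worked out in [17] for the single-penalty case. I will write $f^*_n := f^*_{\alpha(\epsilon_n), W, P; g_n}$ throughout.

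First, I would exploit the minimizing property. Since $Kf^\dagger = Kf_0$, we have $\|Kf^\dagger - g_n\| < \epsilon_n$, so the inequality $\Phi_{\alpha(\epsilon_n), W, P; g_n}(f^*_n) \leq \Phi_{\alpha(\epsilon_n), W, P; g_n}(f^\dagger)$ yields
\begin{equation*}
\|Kf^*_n - g_n\|^2 + \sum_{i=1}^n \alpha_i(\epsilon_n)\,|\!|\!|f^*_n|\!|\!|^{p_i}_{W_i,p_i} \;\leq\; \epsilon_n^2 + \sum_{i=1}^n \alpha_i(\epsilon_n)\,|\!|\!|f^\dagger|\!|\!|^{p_i}_{W_i,p_i}.
\end{equation*}
Dividing through by $\alpha_1(\epsilon_n)$ and using the hypotheses $\epsilon_n^2/\alpha_1(\epsilon_n) \to 0$ and $\alpha_i(\epsilon_n)/\alpha_1(\epsilon_n) \to 1$, I get
\begin{equation*}
\limsup_{n\to\infty} \sum_{i=1}^n |\!|\!|f^*_n|\!|\!|^{p_i}_{W_i,p_i} \;\leq\; |\!|\!|f^\dagger|\!|\!|^{P}_{W, P},
\end{equation*}
and, keeping the inequality in its original form, also $\|Kf^*_n - g_n\| \to 0$, hence $Kf^*_n \to Kf_0$ in $\mathcal{H}'$.

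Next I would upgrade the bound on $|\!|\!|f^*_n|\!|\!|^P_{W, P}$ to a bound on $\|f^*_n\|$. This is exactly the estimate from part D of Lemma 6.8: since $w_\gamma \geq c > 0$ and $1 \leq p_i \leq 2$, one has $\sum_{\gamma \in \Gamma_i}|f_\gamma|^2 \leq c^{-2/p_i}(|\!|\!|f|\!|\!|^{p_i}_{W_i,p_i})^{2/p_i}$ for every $i$. Therefore $\{f^*_n\}$ is norm-bounded in $\mathcal{H}$, and by Lemma 6.12 some subsequence $f^*_{n_k}$ converges weakly to some $\tilde{f} \in \mathcal{H}$. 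Weak continuity of $K$ gives $Kf^*_{n_k} \rightharpoonup K\tilde{f}$, and combined with $Kf^*_n \to Kf_0$ this forces $K\tilde{f} = Kf_0$, i.e.\ $\tilde{f} \in S$.

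Then I would invoke weak lower semicontinuity of each convex functional $|\!|\!|\cdot|\!|\!|^{p_i}_{W_i,p_i}$ to get
\begin{equation*}
|\!|\!|\tilde{f}|\!|\!|^{P}_{W, P} \;\leq\; \liminf_{k\to\infty} \sum_{i=1}^n |\!|\!|f^*_{n_k}|\!|\!|^{p_i}_{W_i,p_i} \;\leq\; |\!|\!|f^\dagger|\!|\!|^{P}_{W, P}.
\end{equation*}
Since $f^\dagger$ is the unique minimizer of $|\!|\!|\cdot|\!|\!|^{P}_{W, P}$ on $S$ by Lemma 6.17, this pins down $\tilde{f} = f^\dagger$ and forces the whole chain to be an equality, i.e.\ $|\!|\!|f^*_{n_k}|\!|\!|^P_{W, P} \to |\!|\!|f^\dagger|\!|\!|^P_{W, P}$. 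With weak convergence plus convergence of the $|\!|\!|\cdot|\!|\!|^P_{W, P}$-values, Lemma 6.16 delivers norm convergence $\|f^*_{n_k} - f^\dagger\| \to 0$. A standard subsequence-of-subsequence argument (every subsequence of $\{f^*_n\}$ admits a further subsequence convergent in norm to the same limit $f^\dagger$) then upgrades this to convergence of the full sequence, as required.

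The main obstacle I anticipate is bookkeeping the interplay between the multiple $\alpha_i(\epsilon_n)$ correctly: the hypothesis $\alpha_i/\alpha_j \to 1$ is what legitimizes dividing by $\alpha_1(\epsilon_n)$ and collapsing the multi-penalty bound into a single limiting inequality against $|\!|\!|f^\dagger|\!|\!|^P_{W, P}$, and any loss of sharpness here would break the equality-of-norms step that feeds Lemma 6.16. Uniqueness from Lemma 6.17 (guaranteed by the dichotomy ``some $p_j > 1$ or $N(K) = \{0\}$'') is what makes the subsequence argument yield a single limit point.
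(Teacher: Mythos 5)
Your proposal is correct and follows essentially the same route the paper intends: the paper omits the proof of Proposition 6.18, deferring to Theorem 4.1 of [17], and your argument is exactly that classical Tikhonov-convergence scheme adapted to the multi-penalty setting, with the minimality inequality, the bound $\sum_{\gamma\in\Gamma_i}|f_\gamma|^2\leq c^{-2/p_i}|\!|\!|f|\!|\!|^2_{W_i,p_i}$ from Lemma 6.8, and Lemmas 6.16 and 6.17 playing precisely their intended roles. Your bookkeeping of the ratios $\alpha_i(\epsilon_n)/\alpha_1(\epsilon_n)$ and the final subsequence-of-subsequence step are sound, so no gap remains.
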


Now, we will conclude the discussion around the minimizer of (6.3) by the following regularization theorem: 

\begin{theorem}\label{6.19}
Assume that $K$ is a bounded operator from $\cH$ to $\cH'$ 
with $\|K\|<1$, $\{\var_\ga\}_{\ga\in\Ga}$ is an orthonormal basis for 
$\cH$, and $W=(w_\ga)_{\ga\in\Ga}$ is a sequence such that $\forall \ga\in\Ga:~ w_\ga>c>0$. Let $\Ga=\Ga_1\cup\Ga_2\cup\dots\cup \Ga_n$, 
$W_i=(w_\ga)_{\ga\in\Ga_i}$, $P=\{p_1,p_2,\dots,p_n\}$ such that $1\leq 
p_i\leq 2$ for $1\leq i\leq n$. Suppose that $g$ is an element of $\cH'$, 
$\al=(\al_1,\al_2,\dots,\al_n)$ such that $\al_i\geq 0$ for $1\leq i\leq n$, 
and that either there exists $j$ such that $p_j>1$ or $N(K)=\{0\}$. Define 
the functional $\Phi_{\al,W, P;g}$ on $\cH$ by 
$\Phi_{\al,W, P;g}(f)=\|Kf-g\|^2+\al_1|\!|\!|f|\!|\!|^{p_1}_{W_1,p_1}+\al_2 
|\!|\!|f|\!|\!|^{p_2}_{W_2,p_2}+\dots+\al_n |\!|\!|f|\!|\!|^{p_n}_{W_n,p_n}$.
Also assume
$f^*_{\al,W, P;g}$ is the minimizer of the functional $\Phi_{\al,W, P;g}.$

Let $\al(\eps)=(\al_1(\eps),\al_2(\eps),\dots,\al_n(\eps))$ such that 
$\lim_{\eps\lo 0}\al_i(\eps)= 0$, $\lim_{\eps\rig 0}\f{\eps^2}{\al_i(\eps)} 
=0$ and $\lim_{\eps\rig 0}\f{\al_i(\eps)}{\al_j(\eps)}=1$ for every $i,j;~ 1\leq 
i,j\leq n$. Then we have, for any $f_0\in\cH$,
$$\lim_{\eps\rig 0} \left[ \sup_{\|g-Kf_0\|<\eps} 
\|f^*_{\al(\eps),W, P;g}-f^\dagger\|\right]=0$$
where $f^\dagger$ is the unique minimal element with regard to 
$|\!|\!|\cdot|\!|\!|^{P}_{W, P}$ in $S=N(K)+f_0=\{f;~K(f)=K(f_0)\}$ as 
stated in Lemma 6.17.
\end{theorem}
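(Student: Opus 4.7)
The plan is to deduce Theorem 6.19 from Proposition 6.18 by a contradiction argument, since Proposition 6.18 already provides sequential convergence along any approximating sequence $\{g_n\}$ with $\|g_n - Kf_0\| < \eps_n$ and $\eps_n \to 0$. The theorem merely upgrades this to uniform convergence of the supremum over the $\eps$-ball around $Kf_0$, and the standard way to make this passage is to extract a bad sequence from a putative failure of the uniform statement and apply Proposition 6.18 to reach a contradiction.

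Concretely, I would proceed as follows. First, assume for contradiction that
\[
\lim_{\eps \rig 0}\Bigl[\sup_{\|g - Kf_0\| < \eps} \|f^*_{\al(\eps), W, P; g} - f^\dagger\|\Bigr] \neq 0.
\]
Then there exist $\del > 0$ and a sequence $\eps_n \rig 0$ of positive numbers with
\[
\sup_{\|g - Kf_0\| < \eps_n} \|f^*_{\al(\eps_n), W, P; g} - f^\dagger\| > \del
\]
for every $n \in \BN$. By the definition of the supremum, for each $n$ I can pick some $g_n \in \cH'$ with $\|g_n - Kf_0\| < \eps_n$ and
\[
\|f^*_{\al(\eps_n), W, P; g_n} - f^\dagger\| > \tfrac{\del}{2}.
\]

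Next, I would verify that this sequence $\{g_n\}_{n=1}^\infty$ together with $\{\eps_n\}$ fulfills the hypotheses of Proposition 6.18: the sequence $\{\eps_n\}$ of positive numbers converges to $0$; the function $\al(\cdot) = (\al_1(\cdot),\dots,\al_n(\cdot))$ satisfies $\lim_{t\rig\infty}\al_i(\eps_t) = 0$, $\lim_{t\rig\infty}\f{\eps_t^2}{\al_i(\eps_t)} = 0$, and $\lim_{t\rig\infty}\f{\al_i(\eps_t)}{\al_j(\eps_t)} = 1$ as a direct consequence of the corresponding $\eps \rig 0$ hypotheses in Theorem 6.19; and by construction $\|g_n - Kf_0\| < \eps_n$. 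The remaining hypotheses (boundedness of $K$ with $\|K\|<1$, orthonormality of $\{\var_\ga\}$, positivity of the weights, and the dichotomy that either some $p_j > 1$ or $N(K) = \{0\}$) are inherited verbatim from Theorem 6.19.

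Applying Proposition 6.18 to this sequence then yields
\[
\|f^*_{\al(\eps_n), W, P; g_n} - f^\dagger\| \lo 0 \quad \text{as} \quad n \lo \infty,
\]
which contradicts the lower bound $\|f^*_{\al(\eps_n), W, P; g_n} - f^\dagger\| > \del/2$ obtained above. Hence the assumed failure of uniform convergence is impossible, proving the theorem. The only genuine step requiring care is the verification that the constructed $\{g_n\}$ and $\{\al(\eps_n)\}$ indeed match the hypotheses of Proposition 6.18 — in particular, that the quotient and reciprocal conditions on $\al_i$ transfer from the $\eps \rig 0$ formulation to the sequential formulation — but this is immediate from the definition of limit along a sequence, so no real obstacle arises.
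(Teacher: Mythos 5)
Your proposal is correct and follows essentially the same route as the paper's own proof: negate the uniform limit, extract a sequence $\eps_n \to 0$ and corresponding $g_n$ with $\|g_n - Kf_0\| < \eps_n$ along which $\|f^*_{\al(\eps_n),W,P;g_n} - f^\dagger\|$ stays bounded away from zero, and contradict Proposition 6.18. Your choice of witnesses achieving more than $\del/2$ of the supremum is in fact slightly more careful than the paper's wording, which tacitly assumes the supremum is nearly attained at level $\del_0$.
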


\begin{proof}
Let $H(\eps):=\sup\{\|f^*_{\al(\eps),W, P;g}-f^\dagger\| |~g\in\cH',~ 
\|g-Kf_0\|<\eps\}.$ We should establish $\lim_{\eps\rig 0} H(\eps)=0$. If $\lim_{\eps\rig 0} 
H(\eps)\neq 0$, then there is a sequence $\{\eps_n\}_{n=1}^{+\infty}$ such that 
$\eps_n\lo 0$ and $\{H(\eps_n)\}_{n=1}^{+\infty}$ is not convergent to 0. 
Consequently,
\begin{align*}
&\exists
\del_0>0;~ \exists \{H(\eps_{n_k})\}_{k=1}^{+\infty};~ \forall k\in\BN\;~
H(\eps_{n_k})\geq \del_0\\
\Lo
&\exists
\del_0>0;~ \exists \{H(\eps_{n_k})\}_{n=1}^{+\infty};~ \forall k\in\BN\;~ 
\exists g_{n_k}\in\cH';~ \|g_{n_k}-Kf_0\|\leq 
\eps_{n_{k}},\\
&~~~~~~~~~~~~~~~~~~~~~~~~~~~~~~~~~~~~~~~~~~~~~~~~~~~~~~~~~~~ \|f^*_{\al(\eps_{n_k});g_{n_k}}-f^\dagger\|\geq \del_0.
\end{align*}
This is a contradiction, because
by Proposition 6.18 for $\{\eps_{n_k}\}_{k=1}^{+\infty}$, 
$\{g_{n_k}\}_{k=1}^{+\infty}$, we will have
$$\|f^*_{\al(\eps_{n_k});g_{n_k}} -f^\dagger\|\lo 0.$$
\end{proof}
This section is concluded with addressing another advantage of the above iterative method for solving the problem (6.3) or the structure of ``linear inverse problems with mixed multi-constraints" introduced in [30] - hereinafter we briefly call it $(ii)$ for simplicity. In addition to the computational clarification, this solution can also clarify the significance of the generalization provided in [30] over other generalizations for the structure of ``linear inverse problems with a sparsity constraint" in [17] - hereinafter we briefly call it $(i)$. 
First, it should be noted that the structure of ``linear inverse problems with multi-constraints" which arises in [30] - hereinafter we briefly call it $(iii)$ - is more general than the structure of the problem (6.4). Indeed, while the problem (6.3) is a special case of $(iii)$, it cannot be derived from (6.4) (because $B^{\prime}_{W,p}\nsubseteq B_{W,P} $). In fact, $(i)$, $(ii)$ and $(iii)$ have the following relationships: 
 \begin{equation*}
(iii)\Rightarrow(ii)\Rightarrow(i). 
\tag*{$(6\cdot 12)$}
\end{equation*}

After appearing $(i)$ which comes in the seminal paper [17] of Daubechies et al., so many computational and theoretical works around it, have been done by researchers. In [30], I have shown that some classes of generalization for $(i)$ (many problems addressed in [18, 19, 24, 33, 39, 41, 45, 48], for example) can be specific cases for $(ii)$. However, in [30], $(ii)$ is resulted from $(iii)$ without presenting a process for reaching its solution independently. By presentation of an independent solution with weaker (or the weakest) assumptions, that was done above, we can better judge how $(iii)$ theoretically is a more extensive generalization for $(i)$ in comparison with other works. In other words, although we have (6.12) and we know many generalizations for $(i)$ can be resulted from $(ii)$, since $(ii)$ was presented merely as a result of $(iii)$ in [30], it is not clear whether $(ii)$ - and consequently the other mentioned generalizations - is theoretically closer to $(i)$ or $(iii)$. However, the independent process for solving $(ii)$ revealed that $(ii)$ is theoretically so close to $(i)$ in a way that the iterative process for reaching a minimizer for $(ii)$ can simply be considered as a rewrite for the iterative process for $(i)$ presented in [17]. Consequently, both $(i)$ and $(ii)$ have a big distance theoretically from $(iii)$. As a result, $(iii)$ is a more extensive generalization for $(i)$ compared to other generalizations for it. Then, the learning method regarding the structure $(iii)$ has superiority to other possible methods in the area of ``Learning and Inverse Problems". Eventually, since $\textbf{C}$ is much larger than $B_{W,P}$, the significance of the bi-level optimization (1.7) by considering $\textbf{Z}=\textbf{C}$ is better recognized as well. 

\section{Conclusion}

In this paper, it was tried to achieve constraint $\psi$ in the following famous 
minimization problem regarding a supposed data set:
$$\text{inf}_{f 
 \in \mathcal{H}} 
\Vert K(f)- g \Vert ^2 +\psi(f).$$
However, the kind of $\psi$ has already always been chosen from some alternatives regarding the kind of data or applications, here, we suggest constructing the whole of  $\psi$ exactly by data in hand, namely the training set. The training sets could be both artificially made or coming from real-world applications such as “Satellite Image Processing” and “Speech Processing” explained in continue. As discussed, where $K=I$,
this training process naturally arrived at a bi-level 
optimization problem which for its solution, as was shown, solving several constrained optimization problems on $\mathbb{R}^n$ would be inevitable. In the introduction section, this model was discussed as a view of obtaining a denoising method. However, this is a particular case of a more general idea that in this paper 
only the case $K=I$ or when $K$ is diagonal was investigated.\\

As already mentioned in Introduction, another research area that focuses on addressing a learning method for denoising by a training set $\lbrace (f_i , g_i)\vert ~ i=1,\cdots,m \rbrace$ at hand is parameter learning approaches in variational models that started with [20, 34] and is still ongoing [23, 32]. In fact, as signal reconstruction by wavelet is an alternative for signal reconstruction by the variational method [10], our learning method - the idea of which is derived from nonlinear wavelet image processing in [12, 22] - is an alternative (and, of course, more powerful because $\textbf{C}$ is much larger than $B_{W,P}$) for parameter learning in variational models as well. Moreover, another research area concerning learning method for denoising by a training set $\lbrace (f_i , g_i)\vert ~ i=1,\cdots,m \rbrace$ at hand is related to the works that have been done in the realm of {\it deep neural networks} in recent years; e.g. [37, 49]. 

Although the comparison of the two conducted research areas is of great importance, it has been of little attention in academic literature (such as [36], for example). Besides, the establishment of connections between methods is of great value as well, since it brings an interpretation for developing Learning Theory. For instance, the following idea around the connection of our learning method and deep neural networks could lead to a new learning method for denoising:\\
Please note that in Sections 2 and 4, $\lbrace ((f_i )_\gamma, (g_i)_\gamma)\vert ~i=1,\cdots,m \rbrace$ for $ \gamma \in \Gamma $ were attained from the training set $\lbrace (f_i , g_i)\vert ~ i=1,\cdots,m \rbrace$  by considering a basis $ \lbrace \varphi_{\gamma} \rbrace_{\gamma \in \Gamma} $. Suppose the functions $ \Phi_{\gamma} $ are: 
$$ \Phi_{\gamma}(x) =\text{arg-min}_{y\in (M_1,M_2]}(y-x)^2+\psi_{\gamma}(y);~~\gamma \in \Gamma$$
where $\psi_{\gamma}$ is the regularization term corresponding to $\lbrace ((f_i )_\gamma, (g_i)_\gamma)\vert ~i=1,\cdots,m \rbrace$. Here the question is what DNNs $ \Phi_{\gamma}^* $ come true in the below inequalities: 
\begin{equation*}
\sum_{i=1}^m \vert \Phi_{\gamma}^*((g_i)_\gamma)-(f_i)_\gamma \vert^2 \leq \sum_{i=1}^m \vert \Phi_{\gamma}((g_i)_\gamma)-(f_i)_\gamma \vert^2;~~\gamma \in \Gamma 
\tag*{$(7\cdot 1)$}
\end{equation*}
The answer to this question leads to finding a DNN model whose performance is interpretable theoretically - not merely by experience. The reason is, due to (7.1), the performance is supposed to be, to say the least, as good as that of the learning method presented in this paper. On the other hand, considering having a large enough data set, our learning method is expected to have a better performance from the classic method from which it has been inspired. As a result, the DNN model functions better than the classic method. Besides, the latter is theoretically verified, so the good function of the former in the same applications is justified.\\

Having reached the denoising mathematical model, it can have different applications. For example, one application can be in cellular phones. The finite numbers of sample audio signals are set in cell phones and it is a common agreement among all of the mobile phones. Before starting the conversation, these sample audio signals are transferred to the cell phone on the receiving side. Once these signals reach the opposite cell phone, a denoising method will be built for that conversation depending on the type of noises which were added to these signals. In other words, the denoising method varies for each conversation compared to other ones. Since different noises arise in different conditions, it seems this method is better than all of the common methods according to which the processes of denoising are the same.

In some other applications, we will need an extra effort - beyond the learning process presented in this paper - for removing images' degradation. For instance, this comes true in processing of an image that a satellite records and sends to the earth. The images that are received from a satellite have two kinds of degradation: ``noise" and ``blurring". Unlike the ``noise" affecting a particular image which is a statistical process and consequently is not known, ``blurring" is a deterministic process and, in most cases, one has a sufficiently accurate mathematical model for its description. The simplest example of blurring is due to relative motion, during exposure, between the camera and the object being photographed in such a way that no part of the object goes out of the image domain during the motion. Examples arise when, for instance, photographing the earth, the moon, and planets using aerial vehicles such as aircraft and spacecraft. Examples can also be found in forensic science when potentially useful photographs are sometimes of extremely bad quality due to motion blur.  The problem of image restoration from these linear motion blurs can be formulated by solving the following linear equation:
                                                                                     $$K (f) = g$$
where $f$ is the original image, $g$ is the blurred image of $f$ and $K$ is a bounded linear operator (see [5] for more information). Now, suppose it is possible to put a series of images in the satellite before launching it into space. To that end, the satellite must send these series of images just before sending the images recorded in space. While being transferred to earth, these images become noisy. Since the original images are available on earth, the noises that are added to them during the transfer are quite detectable (known). This prior cognition can help the denoising process of recorded images by satellite. In fact, by the learning method provided in this paper, a $\psi$ which is an element of the set $\textbf{C}$ can be found regarding this training set. But here, to remove the blurring degradation as well, the minimizer of the following functional should be found instead of the functional (1.6): 
\begin{equation*}
\|Kf-g\|^2+\psi(f)
\tag*{$(7\cdot 2)$}
\end{equation*}
where $K$ is a bounded linear operator which interprets the blurring degradation coming from pictures' formation in the space, because of the relative motion between the satellite's camera and the object being photographed.  To solve (7.2), it seems following the iterative method provided in [17] or Section 6 can be helpful.\\

Solving the bilevel optimization problem presented in this paper can also have different types of applications from the ones already introduced. The applications discussed above refer to the ones that relate to modeling before the denoising process and to explain that various models can be obtained according to different conditions. However, as we addressed in Remark 2.9, in addition to the view in which the model has an ability to adapt, there is another perspective by which new constraints can be created for Inverse Problems. Moreover, a further view that was mentioned in Section 5 is about utilizing our approach to answer this sensible question that what the optimal regularizations term is regarding the various types of noise such as Gaussian noise, Poisson noise, Speckle noise, Salt and Pepper noise.\\  
\\

%%%%%%%%%%%%%%%%%%%%%%%%%%%%%

\end{document}